\theoremstyle{thmstyleone}%
\newtheorem{theorem}{Theorem}%  meant for continuous numbers
\theoremstyle{thmstyletwo}%
\newtheorem{example}{Example}%
\theoremstyle{thmstylethree}%
\newtheorem{definition}{Definition}%
\newtheorem{corollary}{Corollary}
\newcommand{\seqnum}[1]{\href{http://oeis.org/#1}{\underline{#1}}}
\DeclareMathOperator {\mypod}{pod}
\DeclareMathOperator {\myped}{ped}
\begin{document}

\title[$s$-Modular, $s$-congruent and $s$-duplicate partitions]{$s$-Modular, $s$-congruent and $s$-duplicate partitions}

\author[1,4,2]{\fnm{Mohammed L.} \sur{Nadji}}\email{m.nadji@usthb.dz}

\author*[3,4]{\fnm{Moussa} \sur{Ahmia}}\email{moussa.ahmia@univ-jijel.dz}
\equalcont{These authors contributed equally to this work.}

\affil[1]{\orgdiv{Faculty of Mathematics}, \orgname{University of Science and Technology Houari Boumediene}, \state{Algiers}, \country{Algeria}}

\affil[2]{\orgdiv{RECITS laboratory}, \orgaddress{\street{BP 32}, \city{El Alia, Bab Ezzouar}, \postcode{16111}, \state{Algiers}, \country{Algeria}}}

\affil*[3]{\orgdiv{Department of Mathematics}, \orgname{University of Mohamed Seddik Ben Yahia}, \state{Jijel}, \country{Algeria}}

\affil*[4]{\orgdiv{LMAM laboratory}, \orgaddress{\street{BP 98}, \city{Ouled Aissa}, \postcode{18000}, \state{Jijel}, \country{Algeria}}}

\abstract{In this paper, we investigate the combinatorial properties of three classes of integer partitions: (1) $s$-modular partitions, a class consisting of partitions into parts with a number of occurrences (i.e., multiplicity) congruent to $0$ or $1$ modulo $s$, (2) $s$-congruent partitions, which generalize Sellers' partitions into parts not congruent to $2$ modulo $4$, and (3) $s$-duplicate partitions, of which the partitions having distinct odd parts and enumerated by the function $\mypod(n)$ are a special case. In this vein, we generalize Alladi's series expansion for the product generating function of $\mypod(n)$ and show that Andrews' generalization of G\"ollnitz-Gordon identities coincides with the number of partitions into parts simultaneously $s$-congruent and $t$-distinct (parts appearing fewer than $t$ times).}

\keywords{Partition, Multiplicity, Congruence, Non-repeating odd part, Unrestricted even part, Andrews-G\"ollnitz-Gordon theorem}

\pacs[MSC Classification]{05A17, 11P81, 11P83}

\maketitle

\section{Introduction} 
\label{Intro}
A \textit{partition} $\lambda$ of a positive integer $n$ is a finite sequence of positive integers that sum to $n$. We write the partition as $\lambda_{1}^{u_{1}}+ \lambda_{2}^{u_{2}}+\dots+ \lambda_{k}^{u_{k}}=n$, where each part $\lambda_{i}$ occurs $u_{i}$ times (referred to as the multiplicity of $\lambda_{i}$) with $\lambda_{i}, u_{i} \geq 1$ for $1 \leq i \leq k$.  Let $\mid \lambda \mid$ denote the sum of the parts and $\ell(\lambda)$ denote the total number of parts in the partition $\lambda$. For instance, $2^{2}+1^{2}$ is a partition of $6$ with $\ell( \lambda)=4$.  Here and throughout we will use the standard $q$-series notations
\begin{displaymath}
(a;q)_{n}:=\begin{cases} 
      \prod\limits_{i=0}^{n-1}(1-aq^{i}) &  \text{if $n>0$,} \\
      1 & \text{if $n=0$.}
    \end{cases}
\end{displaymath}
Moreover,
\begin{align*}
(a;q)_{\infty}&=\lim_{n\rightarrow \infty}(a;q)_{n} \ \text{for} \  |q|<1,\\
(a_{1}, a_{2},\dots, a_{j};q)_{\infty}&=(a_{1};q)_{\infty}(a_{2};q)_{\infty}\cdots (a_{j};q)_{\infty}.
\end{align*}

Throughout the history of integer partitions, most studies have focused on partitions with restrictions on the parts. Examples include partitions into odd (or even) parts \cite{r14, r16}, $\ell$-regular partitions \cite{r6, r7}, partitions into parts that belong to certain residue classes modulo some positive integer \cite{r4, r17}, and more. Nevertheless, the multiplicities (i.e., number of occurrences) of parts have not received significant attention in the literature on integer partitions, as reflected in the limited number of related papers. Many of these few papers are dedicated to the partitions where parts appear fewer than $m$ times \cite{r11, r12} (the number of such partitions is equal to the number of $m$-regular partitions).

In this paper, we consider congruences of the multiplicities of parts and introduce the $s$-modular partitions. Additionally, considerations of congruences of the multiplicities lead to the definition of $s$-congruent and $ s$-duplicate partitions. Throughout the remainder of the paper, $s$ is always considered to be an even positive integer greater than or equal to $4$.
\begin{definition} A partition $\lambda$ is $s$-modular if every part occurs with multiplicity congruent to  $0$ or $1$ modulo $s$. We denote by $M_{s}(n)$ the number of $s$-modular partitions of $n$.
\end{definition}
\begin{definition} A partition $\lambda$ is said to be $s$-congruent if the parts are not congruent to $2,4,6,\dots,(s-2)$ modulo $s$. We denote by $C_{s}(n)$ the number of $s$-congruent partitions of $n$.
\end{definition}
\begin{definition}  A partition $\lambda$ is $s$-duplicate if every part in the partition with multiplicity greater than one is congruent to $0$ modulo $s/2$. We denote by $D_{s}(n)$ the number of $s$-duplicate partitions of $n$.
\end{definition}
Note that the $s$-congruent partitions were introduced by Ballantine and Welch \cite{BallantineWelch2023} as partitions in which all even parts are congruent to $0$ modulo $s$. 

Through this study, we present several combinatorial properties for the three classes of partitions. We establish connections between them and show that they satisfy a number of recurrence relations. Additionally, we highlight the links between some of these classes of partitions and several celebrated theorems in the theory of partitions. 

\emph{Ramanujan's general theta-function} $f(a,b)$ \cite[p. 34, 18.1]{Berndt1991} is defined as
\[ 
f(a,b):=\sum_{n=-\infty}^{\infty}a^{n(n+1)/2}b^{n(n-1)/2}, \quad \mid ab \mid <1.
\]
From $f(a,b)$ we have
\[ 
\psi(q):=f(q,q^{3})
=\sum_{n=0}^{\infty}q^{n(n+1)/2}
=\dfrac{(q^{2},q^{2})_{\infty}}{(q,q^{2})_{\infty}},
\]
and
\[
\psi(-q)=f(-q,-q^{3})
=\sum_{n=0}^{\infty}(-q)^{n(n+1)/2}
=\dfrac{(q,q)_{\infty}(q^{4},q^{4})_{\infty}}{(q^{2},q^{2})_{\infty}},
\]
where the product representations arise from famous Jacobi’s triple product identity \cite[p. 35, Entry 19]{Berndt1991}
\[
f(a,b)=(-a,ab)_{\infty}(-b,ab)_{\infty}(ab,ab)_{\infty}.
\]

In the special case when $s=4$, the $s$-duplicate become what is known in the literature as POD partitions, partitions wherein odd parts are distinct and even parts are
unrestricted. The number of POD partitions of $n$ is denoted by $\mypod(n)$. They appear frequently in the literature, for example in works of Andrews \cite{r2, r3}, and Berkovich and Garvan \cite{r5}, where the number $\mypod(n)$ satisfies the generating function
\[
\sum\limits_{n\geq 0}\mypod(n)q^{n}=\dfrac{1}{\psi(-q)}=\dfrac{(q^{2},q^{2})_{\infty}}{(q,q)_{\infty}(q^{4},q^{4})_{\infty}}.
\]
The aforementioned function has various combinatorial interpretations \cite[\seqnum{A006950}]{r21}, such as in Schr\"oder partitions and lattice paths. Also, it is interesting to note that this enumeration function is relevant from an algebraic point of view: $\mypod(n)$ equals the number of nilpotent conjugacy classes in the Lie algebras of skew-symmetric $n\times n$ matrices. This intriguing connection suggests that this function and its combinatorial interpretations play a substantial role in representation theory and raise bigger questions for the general case of $s$-sduplicate partitions and their impact on these topics.

We can say very little about $D_{s}(n)$ from an arithmetic point of view, except in the most restricted cases when $s=4$. It was first studied arithmetically by Hirschhorn and Sellers \cite{r13}. In their paper, they derived an infinite family of Ramanujan-type congruences for any integers $n,\alpha\geq 0$, such that
\[
D_{4}(3^{2\alpha+3}n+(23\times 2^{2\alpha +2}+1)/8)\equiv 0\ (\mathrm{mod}\ 3).
\]
Later on, Radu and Sellers \cite{r19} established several new congruences, such as
\begin{align*}
&D_{4}(135n+8)\equiv 0\ (\mathrm{mod}\ 5),\\
&D_{4}(135n+107)\equiv 0\ (\mathrm{mod}\ 5),\\
&D_{4}(567n+260)\equiv 0\ (\mathrm{mod}\ 7),\\
&D_{4}(675n+647)\equiv 0\ (\mathrm{mod}\ 25),\\
&D_{4}(3375n+1997)\equiv 0\ (\mathrm{mod}\ 125).
\end{align*}
In 2015, Wang \cite{r23}, relying heavily on results from Lovejoy and Osburn \cite{r15}, presented some new additional congruences for $D_{4}(n)$ modulo $3,5$, and $9$. If $p$ is a prime with $p\equiv 1$ (mod $3$), and $n,\alpha \geq 0$ such that $pn\equiv 5$ (mod $8$), then
\begin{align*}
&D_{4}((3p^{6\alpha+5}n+1)/8)\equiv 0\ (\mathrm{mod}\ 3),\\
&D_{4}(5^{2\alpha+2}n+(11\cdot 5^{2\alpha+1}+1)/8)\equiv 0\ (\mathrm{mod}\ 5),\\
&D_{4}((3p^{18\alpha+17}n+1)/8)\equiv 0\ (\mathrm{mod}\ 9).
\end{align*}
Moreover, $D_{4}(n)$ finds mention in numerous works across different fields, including those of  Sills \cite{r20}, Ferrari \cite{r10}, Drake \cite{r8}, and Zaletel and Mong \cite{r24}. This extensive coverage looks very promising for the general case or at least some other restricted cases, as it may lead to significant contributions and find diverse applications in various topics. It is worth noting that the OEIS \cite{r21} sequence \seqnum{A096981} corresponds to the function $C_{6}(n)$, and unlike $D_{4}(n)$, it still lacks arithmetic studies, although it makes appearances in works such as Spector \cite{r22}.

The \textit{Ferrers diagram} is a method of geometric representation of partitions using rows of boxes. Each row represents the value of the part, where the rows are ordered in a non-increasing order. In the literature, this is also referred to as \textit{Young diagram}. The $k$-modular Ferrers diagram, which is a modification of the Ferrers diagram, represents a partition in such a way that each part is depicted by a left-justified row  of $k$'s with an $r$ at the right end, where $1\leq r \leq k$. For example, the following figure illustrates the $3$-modular Ferrers diagram of the partition $(14^{2}, 12^{2}, 7, 6, 5, 4, 3)$ with the diagram of $(5^{2}, 4^{2}, 3, 2^{3}, 1)$.
\begin{figure} [h!]
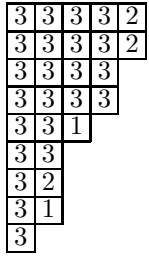

\begin{center}
\ytableausetup
{mathmode, boxframe=normal, boxsize=1em}
\begin{ytableau}
 3 & 3 & 3 & 3 & 2\\
 3 & 3 & 3 & 3 & 2\\
 3 & 3 & 3 & 3\\
 3 & 3 & 3 & 3\\
 3 & 3 & 1 \\
 3 & 3 \\
 3 & 2 \\
 3 & 1 \\
 3 \\
\end{ytableau}
\caption{The $3$-modular Ferrers diagram of $(14^{2}, 12^{2}, 7, 6, 5, 4, 3)$.}
\end{center}
\end{figure}

One of the most fundamental identities in the theory of partitions and $q$-series is
\begin{equation}
\sum_{n \geq 0}\dfrac{q^{n(n+1)/2}(1+bq)(1+bq^{2})\cdots(1+bq^{n})}{(1-q)(1-q^{2})\cdots(1-q^{n})} = \prod_{m\geq 1} \dfrac{(1+bq^{2m})}{(1-q^{2m-1})} \label{eqLebesgue}
\end{equation}
due to Lebesgue \cite[Chap. 2]{ANDB}. The right hand-side generating function of $\eqref{eqLebesgue}$ enumerates the number of partitions into distinct even parts and unrestricted odd parts (the function $\myped(n)$), where the exponent on $b$ keeps track the number of even parts. By setting $b=-1$ we obtain the famous Gauss identity
\begin{equation}
\sum_{n \geq 0} q^{n(n+1)/2} = \prod_{m\geq 1} \dfrac{(1-q^{2m})}{(1-q^{2m-1})}, \label{eqGauss}
\end{equation}
which is as significant as  Euler's celebrated Pentagonal Numbers Theorem
\begin{equation}
(q;q)_{\infty} = 1+\sum_{k\geq 1} (-1)^{k} ( q^{(3k^{2}-k)/2} + q^{(3k^{2}+k)/2}).
 \label{eqPNT}
\end{equation}
Sylvester \cite{Syl} also presented an important identity by analyzing partitions into distinct parts via Durfee squares (the largest square that is contained within a partition's Ferrers diagram). The identity is given by
\begin{equation}
\prod_{n \geq 1} (1+bq^{n}) = 1+\sum_{k\geq 1} \dfrac{b^{k}q^{(3k^{2}-k)/2}(-bq;q)_{k-1}(1+bq^{2k})}{(q;q)_{k}}.
 \label{eqSylvester}
\end{equation}
The case $b=-1$ in $\eqref{eqSylvester}$ is Euler's Pentagonal Numbers Theorem. Fine \cite{Fin} defined the function
\begin{equation}
F(\alpha,\beta,\tau;q)= \sum_{n\geq 0} \dfrac{(\alpha q;q)_{n}}{(\beta q;q)_{n}} \tau^{n} \label{eqfine}
\end{equation}
and studied carious transformations and iterations for $F(\alpha,\beta,\tau;q)$. One of the obtained results is the Rogers-Fine identity
\begin{equation}
F(\alpha,\beta,\tau;q)= \sum_{n\geq 0} \dfrac{(\alpha q;q)_{n} (\alpha\tau q/\beta;q)_{n}(1-\alpha \tau q^{2n+1})}{(\beta q;q)_{n} (\tau;q)_{n+1}} \beta^{n}\tau^{n}q^{n^{2}}. \label{eqRogers-Fine}
\end{equation}
Andrews \cite{r3} provided a combinatorial proof of $\eqref{eqRogers-Fine}$ by studying partitions in terms of $2n \times n$ Durfee rectangles. Another proof of $\eqref{eqRogers-Fine}$ has been presented by Zeng \cite{Zen} by utilizing Sylvester's bijection connecting partitions into odd parts with partitions into distinct parts.

Alladi \cite{r1} presented a series expansion with different parameters for the product generating function of  $\mypod(n)$ using $2$-modular Ferrers diagrams and their Durfee squares. Several fundamental identities in the theory of partitions and $q$-series, including those of Sylvester, Lebesgue, Gauss, and Rogers-Fine, emerge as special cases of this series expansion.  The expansion is given by
\begin{equation}
\dfrac{(-bq;q^{2})_{\infty}}{(cq^{2};q^{2})_{\infty}}=1+\sum_{k\geq 1} \dfrac{c^{k} q^{2k^{2}-1} (-bq;q^{2})_{k-1} (-bc^{-1}q;q^{2})_{k-1} (bc^{-1}+q) (1+bq^{4k-1})}{(cq^{2};q^{2})_{k} (q^{2};q^{2})_{k}}, \label{eqAlladi}
\end{equation}
where the powers of $b$ and $c$ keep track of the number of odd and even parts respectively. For example, we demonstrate the relation between the series expansion for the product generating function of $\mypod(n)$ and Sylvester's identity, as discussed by Alladi \cite{r1}. In $\eqref{eqAlladi}$, note that
\begin{equation}
c^{k}(-bc^{-1}q;q^{2})_{k-1}(bc^{-1}+q)=(c+bq)(c+bq^{3})\cdots (c+bq^{2k-3})(b+cq). \label{AlladiSylvester}
\end{equation}
By setting $c=0$ in \eqref{AlladiSylvester}, the righthand side of \eqref{AlladiSylvester} equals
\[
b^{k}q^{(k-1)^{2}}.
\]
Thus, if $c=0$ in \eqref{eqAlladi}, we obtain
\begin{equation}
(-bq;q^{2})_{\infty}= 1+\sum_{k\geq 1} \dfrac{b^{k}q^{3k^{2}-2k} (-bq;q^{2})_{k-1}(1+bq^{4k-1})}{(q^{2};q^{2})_{k}}. \label{AlladiSylvester1}
\end{equation}
Sylvester's identity $\eqref{eqSylvester}$ follows from $\eqref{AlladiSylvester1}$ by the substitutions $b\rightarrow bq$ and then $q^{2} \rightarrow q$. Alladi \cite{All1} also provided a combinatorial interpretation in terms of partitions for Sylvester's identity $\eqref{eqSylvester}$, demonstrating its equivalence to \textbf{Theorem $\ref{thm1}$}.
\begin{theorem} \label{thm1}  (Alladi).  Let $D$ denote the set of partitions into distinct parts, and $D_{3}$ the set of partitions into parts differing by at least $3$. Let $\nu_{d}(\lambda)$ be the number of different parts of $\lambda$. Also, for $ \lambda^{*}\in D_{3}$, $\mid \lambda^{*}\mid=\lambda^{*}_{1}+\lambda^{*}_{2}+\dots+\lambda^{*}_{k}$, let $\nu_{3}(\lambda^{*})$ denote the number of strict inequalities $\lambda^{*}_{i}-\lambda^{*}_{i+1}>3$, for $i=1,2,\dots,k$, where $\lambda^{*}_{k+1}=-1$. Then we have
\[
\sum_{\lambda\in D, \mid\lambda\mid=n} c^{\ell(\lambda)} = \sum_{\lambda^{*}\in D_{3}, \mid\lambda^{*}\mid=n} c^{\ell(\lambda^{*})}(1+c)^{\nu_{3}(\lambda^{*})}.
\] 
\end{theorem}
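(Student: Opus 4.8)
The plan is to read the claimed identity as the combinatorial shadow of Sylvester's identity \eqref{eqSylvester}. First I would record that the left-hand side is a familiar product: since a partition into distinct parts is exactly a choice of a subset of $\{1,2,3,\dots\}$,
\[
\sum_{n\ge0}\Big(\sum_{\lambda\in D,\,|\lambda|=n}c^{\ell(\lambda)}\Big)q^{n}=\prod_{m\ge1}(1+cq^{m}),
\]
which is precisely the product side of \eqref{eqSylvester} with $b$ replaced by $c$. By \eqref{eqSylvester} this equals $1+\sum_{k\ge1}c^{k}q^{(3k^{2}-k)/2}(-cq;q)_{k-1}(1+cq^{2k})/(q;q)_{k}$, so it suffices to prove that the $k$-th summand is the generating function of those $\lambda^{*}\in D_{3}$ with exactly $k$ parts, weighted by $(1+c)^{\nu_{3}(\lambda^{*})}q^{|\lambda^{*}|}$; the factor $c^{k}=c^{\ell(\lambda^{*})}$ then matches automatically. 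Thus the theorem reduces, term by term in $k$, to
\[
\sum_{\substack{\lambda^{*}\in D_{3}\\ \ell(\lambda^{*})=k}}(1+c)^{\nu_{3}(\lambda^{*})}q^{|\lambda^{*}|}=\frac{q^{(3k^{2}-k)/2}(-cq;q)_{k-1}(1+cq^{2k})}{(q;q)_{k}}.
\]

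The key step is to parametrize a $k$-part partition $\lambda^{*}_{1}>\dots>\lambda^{*}_{k}$ in $D_{3}$ by its vector of gaps. Setting $\lambda^{*}_{k+1}=-1$ as in the statement, put $g_{j}=\lambda^{*}_{j}-\lambda^{*}_{j+1}$ for $1\le j\le k$, so that $g_{j}\ge3$ for $j<k$ while $g_{k}=\lambda^{*}_{k}+1\ge2$. A direct summation gives $|\lambda^{*}|=\sum_{j=1}^{k}jg_{j}-k$, and writing $g_{j}=3+h_{j}$ for $j<k$ and $g_{k}=2+h_{k}$ with all $h_{j}\ge0$ turns this into $|\lambda^{*}|=(3k^{2}-k)/2+\sum_{j=1}^{k}jh_{j}$, already accounting for the prefactor $q^{(3k^{2}-k)/2}$. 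Moreover $\nu_{3}(\lambda^{*})$ counts the gaps exceeding $3$, i.e. $\nu_{3}(\lambda^{*})=\#\{j<k:h_{j}\ge1\}+[\,h_{k}\ge2\,]$.

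The crucial feature is that the $h_{j}$ are mutually independent, so the weighted sum factors completely:
\[
\sum_{h_{1},\dots,h_{k}\ge0}(1+c)^{\nu_{3}}q^{\sum_{j}jh_{j}}=\prod_{j=1}^{k-1}\Big(\sum_{h\ge0}(1+c)^{[h\ge1]}q^{jh}\Big)\cdot\Big(\sum_{h\ge0}(1+c)^{[h\ge2]}q^{kh}\Big).
\]
Each of the first $k-1$ factors sums to $(1+cq^{j})/(1-q^{j})$, while the last collapses to $(1+cq^{2k})/(1-q^{k})$; multiplying them yields exactly $(-cq;q)_{k-1}(1+cq^{2k})/(q;q)_{k}$, which completes the reduction, and summing over $k\ge1$ (with the empty partition supplying the $1$) recovers the full statement.

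I expect the only genuinely delicate point to be the treatment of the smallest part through the convention $\lambda^{*}_{k+1}=-1$. This forces $g_{k}\ge2$ and shifts the ``large gap'' threshold at the bottom to $h_{k}\ge2$, and it is precisely this off-by-one that produces the anomalous factor $(1+cq^{2k})$ rather than $(1+cq^{k})$; verifying that the last sum really telescopes to $(1+cq^{2k})/(1-q^{k})$ is the heart of the argument, and everything else is bookkeeping once the gap parametrization is in place.
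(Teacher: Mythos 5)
Your argument is correct: the generating function for $D$ weighted by $c^{\ell(\lambda)}$ is $\prod_{m\ge 1}(1+cq^m)$, the gap parametrization $g_j=\lambda^*_j-\lambda^*_{j+1}$ with $\lambda^*_{k+1}=-1$ gives $|\lambda^*|=\tfrac{3k^2-k}{2}+\sum_j jh_j$, and the two geometric sums do collapse to $(1+cq^j)/(1-q^j)$ and $(1+cq^{2k})/(1-q^k)$, reproducing the $k$-th summand of \eqref{eqSylvester}. This is essentially the same route the paper indicates — it states the theorem as Alladi's combinatorial equivalent of Sylvester's identity and cites the source without reproducing a proof — so your write-up simply supplies the details of that equivalence, and it does so correctly.
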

The series expansion $\eqref{eqAlladi}$ also establishes links with G\"ollnitz's deep theorem. Alladi \cite{All2} presented the equivalence between the three parameter refinement of \textbf{Theorem $\ref{thm1}$} and the three parameter generalization and refinement of G\"ollnitz's (Big) partition theorem \cite{r4}. Notably, the identity $\eqref{eqAlladi}$ holds an advantage in providing more flexibility in choosing specializations, which are used to derive several fundamental identities. To extend its applicability, we provide a generalization by employing the concept of $k$-modular Ferrers diagrams.

We adopt the convention of identifying partitions with their multisets of parts, arranged in decreasing order. To enhance clarity and readability, we employ informal language. For example, when we state that "the function $F(q)$ generates partitions with property $X$", this means that $F(q)$ is the generating function for the number of partitions of $n$ possessing property $X$. 

Note that if $a(n)$ denotes the number of partitions with certain properties, then we define $a(n) := 0$ for any $n$ that is a negative integer. 

We shall use $\mathbb{M}_{s}(n)$ (resp. $\mathbb{C}_{s}(n)$ and $\mathbb{D}_{s}(n)$) to denote the set of $s$-modular (resp. $s$-congruent and $s$-duplicate) partitions of $n$, where $\mid \mathbb{M}_{s}(n) \mid =M_{s}(n)$ (resp. $\mid \mathbb{C}_{s}(n) \mid  = C_{s}(n)$ and $\mid  \mathbb{D}_{s}(n) \mid  = D_{s}(n)$).

In combinatorial proofs, we establish a bijective function $f: \mathbb{A} \rightarrow \mathbb{B}$ between two sets of partitions $\mathbb{A}$ and $\mathbb{B}$, creating a one-to-one correspondence where $\mathbb{A}$ and $\mathbb{B}$ are finite sets of partitions of fixed size. Each part $\lambda_{i}$ of a partition $\lambda\in \mathbb{A}$ is mapped to either a single part $\beta_{i}$ or a sequence of parts $\beta_{i_{1}}, \beta_{i_{2}}, \dots, \beta_{i_{m}}$ in a partition $\beta \in \mathbb{B}$. Thus, the bijective function $f$ and its inverse $f^{-1}$ operate on the individual parts of $\lambda$ and $\beta$, respectively. In other words, we map a multiset of parts to another multiset of parts, all summing to $n$, such that for $\lambda=(\lambda_{1}^{u_{1}}, \lambda_{2}^{u_{2}}, \dots, \lambda_{k}^{u_{k}})$ and $\beta=(\beta_{1}^{w_{1}}, \beta_{2}^{w_{2}}, \dots, \beta_{k}^{w_{k}})$ , we have
\begin{align*}
\beta &=f(\lambda)=f\bigr((\lambda_{1}^{u_{1}}, \dots, \lambda_{k}^{u_{k}})\bigr):=\bigr( f(\lambda_{1}^{u_{1}}), \dots, f(\lambda_{k}^{u_{k}}) \bigr) = \bigr(\cup_{i=1}^{k} f(\lambda_{i}^{u_{i}}) \bigr),\\
\lambda &=f^{-1}(\beta)=f^{-1}\bigr( (\beta_{1}^{w_{1}}, \dots, \beta_{k}^{w_{k}}) \bigr):=\bigr( f^{-1}(\beta_{1}^{w_{1}}), \dots, f^{-1}(\beta_{k}^{w_{k}}) \bigr)= \bigr( \cup_{i=1}^{k} f^{-1}(\beta_{i}^{w_{i}}) \bigr).
\end{align*}
Here and throughout, we omit the parentheses around the parts forming the image of $f(\lambda_{i}^{u_{i}})$  (resp. $f^{-1}(\beta_{i}^{w_{i}})$). The image of $f(\lambda_{i}^{u_{i}})$ (resp. $f^{-1}(\beta_{i}^{w_{i}})$) could result a various types of partitions. For instance, $f(\lambda_{i}^{u_{i}})=u_{i}^{\lambda_{i}}$, meaning the value of the part becomes the multiplicity, and the value of the multiplicity becomes the part. Similarly, $f(\lambda_{i}^{u_{i}})=u_{i}\lambda_{i}$ denotes that the resulting image is the product of the part by its multiplicity. Other examples include $f(\lambda_{i}^{u_{i}})=(u_{i}-1)^{\lambda_{i}}, \lambda_{i}$ or $f(\lambda_{i}^{u_{i}})=(s\lambda_{i}/2)^{2(u_{i}-1)/s}, \lambda_{i}$, where the obtained image is a sequence of parts. In the case when $(u_{i}-1)=0$ (i.e., $u_{i}=1$), we exclude that part from the sequence, such that $f(\lambda_{i}^{u_{i}})=\lambda_{i}$.

\section{Combinatorial properties}
\label{combprop}
In the following results, we introduce the generating functions and several Combinatorial properties for the three classes of partitions.

\begin{theorem} \label{thmmodular} The generating function for $M_{s}(n)$, the number of $s$-modular partitions of $n$, is given by
\[
\sum_{n\geq 0} M_{s}(n) q^{n} = \dfrac{(-q;q)_{\infty}}{(q^{s};q^{s})_{\infty}}.
\]
\end{theorem}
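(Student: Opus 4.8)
The plan is to construct the generating function directly from the definition, exploiting the fact that in any partition the multiplicities of distinct part-values are chosen independently. First I would write
\[
\sum_{n\geq 0} M_{s}(n) q^{n} = \prod_{j\geq 1} \left( \sum_{m \in S} q^{jm} \right),
\]
where $S = \{m \geq 0 : m \equiv 0 \text{ or } 1 \ (\mathrm{mod}\ s)\}$ is the set of admissible multiplicities for a fixed part value $j$, and the factor attached to $j$ records its total contribution $jm$ to $n$ when $j$ occurs exactly $m$ times. The $s$-modularity condition enters precisely as the restriction $m \in S$.

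The key computational step is to evaluate the per-part factor in closed form. Since $S = \{ks,\, ks+1 : k \geq 0\}$, the admissible multiplicities come in consecutive pairs, so the factor for part $j$ splits as
\[
\sum_{k\geq 0}\left(q^{jks} + q^{j(ks+1)}\right) = (1+q^{j})\sum_{k\geq 0} q^{jks} = \dfrac{1+q^{j}}{1-q^{js}},
\]
where the geometric series converges for $|q|<1$. Pulling the factor $(1+q^{j})$ out is exactly what uses the fact that the two residue classes $0$ and $1$ are adjacent.

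Finally I would take the product over all $j \geq 1$ and separate numerator from denominator:
\[
\prod_{j\geq 1}\dfrac{1+q^{j}}{1-q^{js}} = \dfrac{\prod_{j\geq 1}(1+q^{j})}{\prod_{j\geq 1}(1-q^{js})} = \dfrac{(-q;q)_{\infty}}{(q^{s};q^{s})_{\infty}},
\]
invoking the standard product representations $(-q;q)_{\infty} = \prod_{j\geq 1}(1+q^{j})$ and $(q^{s};q^{s})_{\infty} = \prod_{j\geq 1}(1-q^{js})$, which follow directly from the definition of the $q$-Pochhammer symbol. There is no substantial obstacle in this argument; the only point requiring care is the bookkeeping in the per-part sum, namely recognizing that the admissible multiplicities group as $ks, ks+1$ so that $(1+q^{j})$ factors out cleanly and leaves a single geometric series in $q^{js}$. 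Everything else reduces to routine manipulation of infinite products.
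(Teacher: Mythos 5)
Your proposal is correct and follows essentially the same route as the paper: both arguments reduce to the per-part factorization $\sum_{m\equiv 0,1\ (\mathrm{mod}\ s)} q^{jm} = (1+q^{j})/(1-q^{js})$, with the paper phrasing it as the product of $(1+q^{j})$ with the geometric series $1+q^{sj}+q^{2sj}+\cdots$ and you phrasing it as grouping the admissible multiplicities into the adjacent pairs $ks, ks+1$. The two presentations are the same computation.
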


\begin{proof} Since
\[
\prod_{n\geq 1}(1+q^{sn}+q^{2sn}+q^{3sn}+\cdots)=\dfrac{1}{(q^{s};q^{s})_{\infty}}
\]
is the generating function for the number of partitions where the multiplicity of every part is congruent to $0$ modulo $s$, then
\[
\sum_{n\geq 0} M_{s}(n) q^{n} = \prod_{n\geq 1}(1+q^{n})(1+q^{sn}+q^{2sn}+q^{3sn}+\cdots)=\dfrac{(-q;q)_{\infty}}{(q^{s};q^{s})_{\infty}},
\]
from which our result follows.
\end{proof}

For example, $M_{4}(8)=10$ and the corresponding set of partitions is
\[
\mathbb{M}_{4}(8)=\lbrace (8), (7, 1), (6, 2), (5, 3), (5, 2, 1), (4, 3, 1), (3, 1^{5}), (4, 1^{4}), (2^{4}), (1^{8}) \rbrace.
\]
\begin{theorem} \label{thmcongruent} The generating function for $C_{s}(n)$, the number of $s$-congruent partitions of $n$, is given by
\[
\sum_{n\geq 0} C_{s}(n) q^{n} = \dfrac{1}{(q;q^{2})_{\infty}(q^{s};q^{s})_{\infty}}.
\]
\end{theorem}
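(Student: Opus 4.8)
The plan is to translate the congruence restriction on the parts into an explicit description of the allowed part sizes, and then write down the generating function as a single infinite product over those allowed parts, which I expect to factor into the two claimed pieces.

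First I would unpack the definition. A part is forbidden precisely when it is congruent to one of $2, 4, \dots, s-2$ modulo $s$, i.e. when its residue modulo $s$ is a nonzero even number. Since $s$ is even, a positive integer $m$ reduces to an odd residue modulo $s$ if and only if $m$ itself is odd, and it reduces to residue $0$ if and only if $s \mid m$. Hence the parts permitted in an $s$-congruent partition are exactly the odd positive integers together with the positive multiples of $s$; equivalently, the forbidden parts are the even integers not divisible by $s$. I would record this reformulation explicitly, since it is the conceptual content of the proof.

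The key observation---and the only place where the hypothesis that $s$ is even is used---is that these two families of allowed parts are \emph{disjoint}: every multiple of $s$ is even, so no odd number is a multiple of $s$. Because $s$-congruence restricts only the part sizes and imposes no condition on multiplicities, the generating function for $C_s(n)$ is the product over the allowed parts, and the disjointness lets it split as
\[
\sum_{n\ge 0} C_s(n)\, q^n \;=\; \prod_{\substack{m\ge 1\\ m\ \mathrm{odd}}} \frac{1}{1-q^m}\;\cdot\;\prod_{\substack{m\ge 1\\ s\,\mid\, m}} \frac{1}{1-q^m}.
\]

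Finally I would identify each factor with a standard $q$-Pochhammer symbol: the product over odd $m$ equals $1/(q;q^2)_\infty$, and the product over multiples of $s$ equals $1/(q^s;q^s)_\infty$. Multiplying gives the claimed formula. I do not anticipate a serious obstacle; the only point demanding care is the reformulation of the residue condition as ``odd or divisible by $s$'' together with the verification that these two part-sets do not overlap, which is exactly what the evenness of $s$ guarantees and which is what makes the product separate into two independent factors.
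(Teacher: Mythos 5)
Your proposal is correct and follows essentially the same route as the paper: both identify the allowed parts as the odd integers together with the multiples of $s$ and multiply the generating functions $1/(q;q^{2})_{\infty}$ and $1/(q^{s};q^{s})_{\infty}$. Your version merely spells out the residue-class reformulation and the disjointness of the two part-sets, which the paper leaves implicit.
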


\begin{proof} The generating function $1/(q^{s};q^{s})_{\infty}$ generates partitions into parts congruent to $0$ modulo $s$. Similarly, the known generating function $1/(q;q^{2})_{\infty}$ generates partitions into odd parts. By multiplying both generating functions together, we obtain the desired result.
\end{proof}
For example, $C_{6}(8)=7$ such that
\[
\mathbb{C}_{6}(8)=\lbrace (7, 1), (6, 1^{2}), (5, 3), (5, 1^{3}), (3^{2}, 1^{2}), (3, 1^{5}), (1^{8}) \rbrace.
\]

\begin{theorem} \label{thmduplicate} The generating function for $D_{s}(n)$, the number of $s$-duplicate partitions of $n$, satisfies the identity
\[
\sum_{n\geq 0} D_{s}(n) q^{n} = \dfrac{(-q;q)_{\infty}}{(-q^{s/2};q^{s/2})_{\infty} (q^{s/2};q^{s/2})_{\infty}}.
\]
\end{theorem}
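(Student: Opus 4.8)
The plan is to set $m = s/2$ (an integer since $s$ is even) and build the generating function directly from the defining condition, which splits the positive integers into two independent classes according to divisibility by $m$. By definition, a part $\lambda_i$ may repeat only if $m \mid \lambda_i$; equivalently, parts divisible by $m$ carry arbitrary multiplicity, while every other part may appear at most once. Since these two conditions constrain disjoint sets of admissible part-values, the generating function should factor as a product over the two classes, and I would organize the proof around that factorization.

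First I would write the unrestricted factor coming from parts that are multiples of $m$. As in the proofs of Theorem \ref{thmmodular} and Theorem \ref{thmcongruent}, a part $mk$ that may occur any number of times contributes
\[
\prod_{k \geq 1} \frac{1}{1 - q^{mk}} = \frac{1}{(q^{s/2}; q^{s/2})_\infty}.
\]
Next, parts not divisible by $m$ are forced to be distinct, so each value $n$ with $m \nmid n$ contributes a single factor $(1 + q^n)$, giving $\prod_{m \nmid n} (1 + q^n)$. Multiplying these two factors yields $\sum_{n \geq 0} D_s(n) q^n$, because every $s$-duplicate partition decomposes uniquely into its multiples-of-$m$ part (unrestricted) and its non-multiples-of-$m$ part (distinct).

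The one step requiring a small manipulation is rewriting the restricted product over non-multiples of $m$ in closed form. I would use the standard evaluation $\prod_{n \geq 1}(1 + q^n) = (-q; q)_\infty$ together with the observation that the omitted factors, those indexed by $n = mk$, assemble into $\prod_{k \geq 1}(1 + q^{mk}) = (-q^{s/2}; q^{s/2})_\infty$. Dividing gives
\[
\prod_{\substack{n \geq 1 \\ m \nmid n}} (1 + q^n) = \frac{(-q; q)_\infty}{(-q^{s/2}; q^{s/2})_\infty},
\]
and combining this with the unrestricted factor above produces the claimed identity. There is no genuine obstacle here; the only point demanding care is verifying that the divisibility dichotomy in the definition matches the product factorization exactly, so that the two classes of parts are truly independent and their generating functions simply multiply.
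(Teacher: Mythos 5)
Your proposal is correct and follows essentially the same route as the paper's proof: factoring the generating function into $1/(q^{s/2};q^{s/2})_\infty$ for the unrestricted parts divisible by $s/2$ and $(-q;q)_\infty/(-q^{s/2};q^{s/2})_\infty$ for the distinct parts not divisible by $s/2$. Your version simply spells out the telescoping of the restricted product in more detail than the paper does.
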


\begin{proof} The generating function $(-q;q)_{\infty}/(-q^{s/2};q^{s/2})_{\infty}$ generates partitions into distinct parts indivisible by $s/2$, and on the other hand, $1/(q^{s/2};q^{s/2})_{\infty}$ generates unrestricted partitions into parts multiples of $s/2$.
\end{proof}
For example, 
\[
\mathbb{D}_{6}(6)=\lbrace (6), (5,1), (4,2), (3,2,1), (3^{2})  \rbrace,
\]
from which we conclude that $D_{6}(6)=5$.

In the following theorem, we lay the foundations of the connection between $\mathbb{M}_{s}(n)$, $\mathbb{C}_{s}(n)$, and $\mathbb{D}_{s}(n)$ for $n\geq 0$.
\begin{theorem} \label{thmequivalence} For every positive integer $n\geq 0$,
\[
M_{s}(n)=C_{s}(n)=D_{s}(n).
\]
\end{theorem}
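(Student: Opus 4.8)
The plan is to prove the two equalities $M_s(n)=C_s(n)$ and $M_s(n)=D_s(n)$ separately, after which transitivity delivers the full chain. Since Theorems \ref{thmmodular}, \ref{thmcongruent} and \ref{thmduplicate} already supply the three product generating functions, it suffices to verify that these products agree as formal power series in $q$; comparing coefficients of $q^n$ then yields the stated equality of the three partition counts for every $n\geq 0$.

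For $M_s(n)=C_s(n)$, I would observe that the generating functions $\dfrac{(-q;q)_{\infty}}{(q^s;q^s)_{\infty}}$ and $\dfrac{1}{(q;q^2)_{\infty}(q^s;q^s)_{\infty}}$ share the common factor $1/(q^s;q^s)_{\infty}$. Hence the claimed equality reduces to the single identity $(-q;q)_{\infty}=\dfrac{1}{(q;q^2)_{\infty}}$, which is Euler's classical theorem that partitions into distinct parts are equinumerous with partitions into odd parts. This is a standard fact requiring no new work.

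For $M_s(n)=D_s(n)$, both $\dfrac{(-q;q)_{\infty}}{(q^s;q^s)_{\infty}}$ and $\dfrac{(-q;q)_{\infty}}{(-q^{s/2};q^{s/2})_{\infty}(q^{s/2};q^{s/2})_{\infty}}$ carry the same numerator $(-q;q)_{\infty}$, so the matter reduces to showing the denominators coincide. Writing $p=q^{s/2}$, this is the elementary factorization
\[
(-p;p)_{\infty}(p;p)_{\infty}=\prod_{n\geq 1}(1+p^{n})(1-p^{n})=\prod_{n\geq 1}(1-p^{2n})=(p^{2};p^{2})_{\infty},
\]
that is, $(q^s;q^s)_{\infty}=(-q^{s/2};q^{s/2})_{\infty}(q^{s/2};q^{s/2})_{\infty}$. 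Combining the two reductions gives $M_s(n)=C_s(n)=D_s(n)$.

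I do not expect a genuine obstacle here: the only non-routine ingredient is Euler's distinct-versus-odd identity, which is entirely standard, and the second step is a termwise product. The real decision is stylistic, namely whether to argue analytically, as above, or bijectively in keeping with the combinatorial emphasis of the paper. Should a bijection be preferred, I would realize a map $\mathbb{M}_s(n)\to\mathbb{C}_s(n)$ as follows: given an $s$-modular $\lambda$, write each multiplicity as $\epsilon_n+s\,m_n$ with $\epsilon_n\in\{0,1\}$; apply Glaisher's map to the distinct \emph{singleton} layer $\{\,n:\epsilon_n=1\,\}$ to obtain odd parts, and collapse each block of $s$ equal copies of a part $n$ into a single part $sn$, producing parts divisible by $s$. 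Because $s$ is even, these multiples of $s$ are all even and hence disjoint from the odd parts, so their union is a bona fide $s$-congruent partition, and the construction is visibly invertible. An analogous splitting, keeping the parts divisible by $s/2$ unrestricted while making the remaining parts distinct, furnishes the corresponding map onto $\mathbb{D}_s(n)$.
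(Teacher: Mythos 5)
Your primary argument is correct and coincides with the paper's own first proof: the paper likewise verifies the equality of the three product generating functions by invoking $(-q;q)_{\infty}=(q;q^{2})_{\infty}^{-1}$ for $M_{s}=C_{s}$ and the factorization $1-q^{sn}=(1+q^{sn/2})(1-q^{sn/2})$ for $M_{s}=D_{s}$, which is exactly your reduction to Euler's distinct-versus-odd identity and to $(q^{s};q^{s})_{\infty}=(-q^{s/2};q^{s/2})_{\infty}(q^{s/2};q^{s/2})_{\infty}$. Where you diverge is in the optional bijective supplement: the paper constructs explicit part-by-part correspondences that split into separate cases according to whether $s=2^{p}$ or not, with local rules acting on each part and its multiplicity, whereas your sketch is a single global construction (write each multiplicity as $\epsilon+sm$, send the singleton layer through Glaisher's map, and collapse blocks of $s$ equal parts into multiples of $s$), uniform in $s$ and invertible because odd parts and multiples of the even number $s$ cannot collide. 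Your version is cleaner and avoids the paper's case analysis, though your map onto $\mathbb{D}_{s}(n)$ is only gestured at and would need the block-collapsing rule (e.g., $s$ copies of $n$ becoming two copies of $sn/2$) spelled out, together with the parity observation that makes the inverse well defined on parts divisible by $s/2$.
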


\begin{proof} We give both a generating function proof and a combinatorial proof of the result. By employing the facts $(-q;q)_{\infty}=(q;q^{2})^{-1}_{\infty}$ and $(1-q^{sn})=(1+q^{sn/2})(1-q^{sn/2})$ (since $s$ is even and $\geq 4$) for every $n \geq 1$ in \textbf{Theorem $\ref{thmmodular}$}, we obtain
\begin{eqnarray*}
\sum\limits_{n\geq 0}M_{s}(n)q^{n} &=& \prod_{n\geq 1} \dfrac{(1+q^{n})}{(1-q^{sn})}\\
&=& \prod_{n\geq 1} \dfrac{1}{(1-q^{2n-1})(1-q^{sn})}= \sum\limits_{n\geq 0}C_{s}(n)q^{n}\\
&=& \prod_{n\geq 1} \dfrac{(1+q^{n})}{(1+q^{sn/2})(1-q^{sn/2})}= \sum\limits_{n\geq 0}D_{s}(n)q^{n}.
\end{eqnarray*}

Now, we provide combinatorial proofs of the three parts of the theorem.

\textbf{The bijection $\mathbb{M}_{s}(n)\Leftrightarrow \mathbb{C}_{s}(n)$ when $s=2^{p}$ for $p\geq 2$:}

Let $\lambda=(\lambda_{1}^{u_{1}}, \lambda_{2}^{u_{2}}, \dots, \lambda_{k}^{u_{k}})\in \mathbb{M}_{s}(n)$ and define the map $f:\mathbb{M}_{s}(n)\rightarrow \mathbb{C}_{s}(n)$ by $\lambda \rightarrow f(\lambda)=\bigcup_{i=1}^{k} f(\lambda_{i}^{u_{i}})$  as follows. If $\lambda_{i} \equiv 2,4,6, \dots, (s-2)$ (mod $s$), then for $1\leq i \leq k$, we have
\begin{displaymath}
f(\lambda_{i}^{u_{i}})=\begin{cases} 
     u_{i}^{\lambda_{i}} & \text{if  $u_{i}\equiv\ 0$ (mod $s$),} \\
      (u_{i}-1)^{\lambda_{i}}, \ell_{i}^{2^{ r_{i}}} & \text{if  $u_{i}\equiv\ 1$ (mod $s$),}
    \end{cases}
\end{displaymath}
where each part $\lambda_{i}$ can be expressed uniquely as $\lambda_{i}=2^{r_{i}} \ell_{i}$ with $\ell_{i}$ odd. Note that if $\lambda \in \mathbb{M}_{s}(n)$, then $\lambda_{i }\equiv \pm 2^{r_{i}}$ (mod $2^{r_{i}+2}$). Else, if $\lambda_{i} \not\equiv 2,4,6, \dots, (s-2)$ (mod $s$), then 
\[
f(\lambda_{i}^{u_{i}})=\lambda_{i}^{u_{i}}.
\]
As an illustration, for $(n,s)=(18,8)$ and the partitions $\lambda=(4, 3 , 2, 1^{9})$ and $\beta=(5, 4, 1^{9})$, we have
\[
f(\lambda) \rightarrow \Biggl\{ \begin{array}{l}
f(4):  4=2^{2} \cdot 1 \rightarrow f(4)=(1-1)^{4}, 1^{4} \rightarrow f(4)=1^{4}\\
f(3)=3 \ \text{since} \ 3 \not\equiv 0 \ (\text{mod} \ 8) \ \text{and} \ f(1^{9})=1^{9} \ \text{since} \ 1 \not\equiv 0 \ (\text{mod} \ 8)\\
f(2):  2=2\cdot 1 \rightarrow f(2)=(1-1)^{2}, 1^{2} \rightarrow f(2)=1^{2}
\end{array}
\]
\[
f(\beta) \rightarrow \Biggl\{ \begin{array}{l}
f(5)=5 \ \text{since} \ 5 \not\equiv 0 \ (\text{mod} \ 8)\\
f(4):  4=2^{2} \cdot 1 \rightarrow f(4)=(1-1)^{4}, 1^{4} \rightarrow f(4)=1^{4}\\
f(1^{9})=1^{9} \ \text{since} \ 1 \not\equiv 0 \ (\text{mod} \ 8).
\end{array}
\]

Let $\lambda=(\lambda_{1}^{u_{1}}, \lambda_{2}^{u_{2}}, \dots, \lambda_{k}^{u_{k}})\in \mathbb{C}_{s}(n)$ and define the inverse map $f^{-1}:\mathbb{C}_{s}(n)\rightarrow \mathbb{M}_{s}(n)$ by $\lambda \rightarrow f^{-1}(\lambda)=\bigcup_{i=1}^{k} f^{-1}(\lambda_{i}^{u_{i}})$. If $\lambda_{i}\equiv 0$ (mod $s$), then for $1\leq i \leq k$, we have
\begin{displaymath}
f^{-1}(\lambda_{i}^{u_{i}})=\begin{cases} 
     \lambda_{i}^{u_{i}} & \text{if  $u_{i}\equiv\ 0,1$ (mod $s$),} \\
      u_{i}^{\lambda_{i}} & \text{if  $u_{i}\equiv\ 2, 4, 6, \dots, (s-2)$ (mod $s$),} \\
      (u_{i}-1)^{\lambda_{i}}, \lambda_{i} & \text{if  $u_{i}\equiv\ 3, 5,7, \dots, (s-1)$ (mod $s$).}
    \end{cases}
\end{displaymath}
Else, if $\lambda_{i} \not\equiv 0 \pmod{s}$, define the vector $W_{p-1}$ as
\[
W_{p-1} = (2a_{1}, 2^{2}a_{2}, \dots, 2^{p-1}a_{p-1}),
\]
where $a_{i}\in \lbrace 0,1 \rbrace$ and $i\in \lbrace 1,2,\dots (p-1) \rbrace$. The values of $a_{i}$ are chosen in such a way that $W_{p-1}$ attains a unique magnitude $\mid W_{p-1} \mid$ for which 
\[
m=u_{i}-\mid W_{p-1} \mid \equiv 0,1 \ (\mathrm{mod} \ s),
\]
where $m$ is the maximum possible value less than $u_{i}$. Consequently, we obtain the expansion 
\[
\lambda_{i}^{u_{i}}= \lambda_{i}^{m} + \lambda_{i}^{2a_{1}}+ \dots + \lambda_{i}^{2^{p-1}a_{p-1}},
\]
where if $a_{i}=0$ or $m=0$, we exclude the corresponding part $\lambda_{i}$ from the expansion. Then, the inverse map $f^{-1}(\lambda_{i}^{u_{i}})$ is given by 
\begin{displaymath}
f^{-1}(\lambda_{i}^{u_{i}})=\begin{cases} 
     \lambda_{i}^{u_{i}} & \text{if  $u_{i}\equiv\ 0,1$ (mod $s$),} \\
     \lambda_{i}^{m}, 2a_{1}\lambda_{i}, 2^{2}a_{2}\lambda_{i}, \dots ,2^{p-1}a_{p-1}\lambda_{i} & \text{if  $u_{i}\not\equiv\ 0,1$ (mod $s$).}
    \end{cases}
\end{displaymath}
As an illustration, for $(n,s)=(18,8)$ and the partitions $\mu=(3,1^{15})$ and $\gamma=(5,1^{13})$, we have
\[
f^{-1}(\mu) \rightarrow \Biggl\{ \begin{array}{l}
f^{-1}(3)=3 \ \text{since} \ 3 \not\equiv 0 \ (\text{mod} \ 8) \ \text{and} \ u_{1}=1\\
f^{-1}(1^{15}) : m=15 - \mid(2\cdot 1,2^{2}\cdot 1)\mid\equiv 1 \ (\text{mod} \ 8) \rightarrow f^{-1}(1^{15})=1^{9},2,4
\end{array}
\]
\[
f^{-1}(\gamma) \rightarrow \Biggl\{ \begin{array}{l}
f^{-1}(5)=5 \ \text{since} \ 5 \not\equiv 0 \ (\text{mod} \ 8) \ \text{and} \ u_{1}=1\\
f^{-1}(1^{13}): m=13 - \mid(2 \cdot 0,2^{2} \cdot 1)\mid=9\equiv 1 \ (\text{mod} \ 8) \rightarrow f^{-1}(1^{13})=1^{9},4.
\end{array}
\]

\textbf{The bijection $\mathbb{M}_{s}(n)\Leftrightarrow \mathbb{D}_{s}(n)$ when $s=2^{p}$ for $p\geq 2$:}

Let $\lambda=(\lambda_{1}^{u_{1}}, \lambda_{2}^{u_{2}}, \dots, \lambda_{k}^{u_{k}})\in \mathbb{M}_{s}(n)$ and define the map $g:\mathbb{M}_{s}(n)\rightarrow \mathbb{D}_{s}(n)$ by $\lambda \rightarrow g(\lambda)=\bigcup_{i=1}^{k} g(\lambda_{i}^{u_{i}})$. If $\lambda_{i}$ is either a multiple of $s/2$, or a non-multiple of $s/2$ with $u_{i}=1$, then 
\[
g(\lambda_{i}^{u_{i}})=\lambda_{i}^{u_{i}}.
\]
Else, if $\lambda_{i}$ is a non-multiple of $s/2$ with $u_{i}>1$, then let the unique expansion of the multiplicity $u_{i}= b_{0}+2^{p}a_{0}+2^{p+1}a_{1}+\dots+2^{p+r}a_{r}$ with $a_{i}\in \lbrace 0,1 \rbrace$ and $r\geq 0$. Therefore, we obtain the transformation 
\[
\lambda_{i}^{u_{i}}= (\lambda_{i}^{b_{0}}, \lambda_{i}^{2^{p}a_{0}}, \lambda_{i}^{2^{p+1} a_{1}}, \dots , \lambda_{i}^{2^{p+r}a_{r}}).
\]
Then, for $0 \leq j \leq r$, let $u'_{j}:= 2^{p+j} a_{j}$ and define $g(\lambda_{i}^{u'_{i}})$ by
\begin{displaymath}
g(\lambda_{i}^{u'_{i}})=\begin{cases} 
     (u'_{i}\lambda_{i}/2)^{2} & \text{if  $\lambda_{i}$ is odd,} \\
     (u'_{i}\lambda_{i}/4)^{4} & \text{if   $\lambda_{i}$ is even,}
    \end{cases}
\end{displaymath}
and if $b_{0}=1$, then 
\[
g(\lambda_{i}^{b_{0}})=\lambda_{i}.
\]
As an illustration, for $(n,s)=(10,4)$ and the partitions $\lambda=(3,2,1^{5})$ and $\beta=(2,1^{8})$, we have
\[
g(\lambda) \rightarrow \Biggl\{ \begin{array}{l}
g(3)=3 \ \text{since} \ u_{1}=1 \\
g(2)=2 \ \text{since} \ 2 \equiv 0 \ (\mathrm{mod} \ 2)\\
g(1^{5}) : 5=1+2^{2} \cdot 1\rightarrow 1^{5}=1,1^{4}\rightarrow g(1^{5})=1,2^{2}
\end{array}
\]
\[
g(\beta) \rightarrow \Biggl\{ \begin{array}{l}
g(2)=2 \ \text{since} \ 2 \equiv 0 \ (\mathrm{mod} \ 2)\\
g(1^{8}) : 8=0+2^{2}\cdot 0+2^{3} \cdot 1\rightarrow 1^{8}=1^{8}\rightarrow g(1^{8})=4^{2}.
\end{array}
\]

Let $\lambda=(\lambda_{1}^{u_{1}}, \lambda_{2}^{u_{2}}, \dots, \lambda_{k}^{u_{k}})\in \mathbb{D}_{s}(n)$ and define the map $g^{-1}:\mathbb{D}_{s}(n)\rightarrow \mathbb{M}_{s}(n)$ by $\lambda \rightarrow g^{-1}(\lambda)=\bigcup_{i=1}^{k} g^{-1}(\lambda_{i}^{u_{i}})$. If $\lambda_{i}$ is either a multiple of $s/2$ with $u_{i}\equiv 0,1$ (mod $s$), or a non-multiple of $s/2$ with $u_{i}=1$, then 
\[
g^{-1}(\lambda_{i}^{u_{i}})=\lambda_{i}^{u_{i}}.
\]
Now, if $\lambda_{i}$ is a multiple of $s/2$ with $u_{i}\not\equiv 0,1$ (mod $s$), in this case every part $\lambda_{i}$ can be expressed uniquely as $\lambda_{i}=2^{r_{i}}\ell_{i}$, where $\ell_{i}\geq 1$ is odd and $r_{i}\geq 1$. Then, for $n\in\lbrace 2,4 \rbrace$, we obtain the expansion $\lambda_{i}^{u_{i}}=\lambda_{i}^{u_{i}-n}+\lambda_{i}^{n}$ such that $u_{i}-n\equiv 0,1$ (mod $s$). Therefore, we have
\[
g^{-1}(\lambda_{i}^{u_{i}})=\lambda_{i}^{u_{i}-n}, (n\ell_{i}/2)^{2^{r_{i}+1}}.
\]
Notice that if $u_{i}-n=0$, then the part $\lambda_{i}^{u_{i}-n}$ is excluded from the resulting image. As an illustration, for $(n,s)=(10,4)$ and the partitions $\mu=(3,2^{3},1)$ and $\gamma=(4^{2},2)$, we have
\[
g^{-1}(\mu) \rightarrow \Biggl\{ \begin{array}{l}
g^{-1}(3)=3\\
g^{-1}(2^{3}) : 2=2^{1}\cdot 1 \rightarrow 1=3-2 \rightarrow 2^{3}=2+2^{2}\rightarrow g^{-1}(2^{3})=2,1^{4}\\
g^{-1}(2)=2
\end{array}
\]
\[
g^{-1}(\gamma) \rightarrow \Biggl\{ \begin{array}{l}
g^{-1}(4^{2}) : 4=2^{2}\cdot  1 \rightarrow 0=2-2 \rightarrow g^{-1}(4^{2})=1^{8}\\
g^{-1}(2)=2.
\end{array}
\]

\textbf{The bijection $\mathbb{M}_{s}(n)\Leftrightarrow \mathbb{C}_{s}(n)$ when $s\neq2^{p}$ for $p\geq 2$:}

Let $\lambda=(\lambda_{1}^{u_{1}}, \lambda_{2}^{u_{2}}, \dots, \lambda_{k}^{u_{k}})\in \mathbb{M}_{s}(n)$ and define the map $h:\mathbb{M}_{s}(n)\rightarrow \mathbb{C}_{s}(n)$ by $\lambda \rightarrow h(\lambda)=\bigcup_{i=1}^{k} h(\lambda_{i}^{u_{i}})$. If $\lambda_{i} \equiv 2,4,\dots,(s-2)$ (mod $s$), then 
\begin{displaymath}
h(\lambda_{i}^{u_{i}})=\begin{cases} 
     u_{i}^{\lambda_{i}} &\text{if  $u_{i}\equiv 0$ (mod $s$),} \\
     (u_{i}-1)^{\lambda_{i}}, \ell_{i}^{2^{r_{i}}} &\text{if  $u_{i}\equiv 1$ (mod $s$),}
    \end{cases}
\end{displaymath}
where $\lambda_{i}=2^{r_{i}} \ell_{i}$ for odd $\ell_{i}\geq 1$ and $r_{i}\geq 1$. If $\lambda_{i} \not\equiv 2,4,\dots,(s-2)$ (mod $s$), then
\begin{displaymath}
h(\lambda_{i}^{u_{i}})=\begin{cases} 
     (s\lambda_{i}/2)^{2u_{i}/s} & \text{if  $u_{i}\equiv 0$ (mod $s$),} \\
     (s\lambda_{i}/2)^{2(u_{i}-1)/s}, \lambda_{i}  & \text{if $u_{i}\equiv 1$ (mod $s$).}
    \end{cases}
\end{displaymath}
As an illustration, for $(n,s)=(18,6)$ and the partition $\lambda=(10, 2, 1^{6})$, we have
\[
h(\lambda) \rightarrow \Biggl\{ \begin{array}{l}
h(10): 10 \equiv 4 \ ( \mathrm{mod} \ 6), u_{1}=1, \ \text{and} \  10=2 \cdot 5 \rightarrow h(10)=5^{2} \\
h(2) : 2 \equiv 2 \ ( \mathrm{mod} \ 6), u_{2}=1, \ \text{and} \  2=2 \cdot 1 \rightarrow h(2)=1^{2}\\
h(1^{6}): 1 \not\equiv 2,4 \ ( \mathrm{mod} \ 6) \ \text{and} \ u_{3}=6 \rightarrow h(1^{6})=3^{2}.
\end{array}
\]

Let $\lambda=(\lambda_{1}^{u_{1}}, \lambda_{2}^{u_{2}}, \dots, \lambda_{k}^{u_{k}})\in \mathbb{C}_{s}(n)$ and define the inverse map $h^{-1}:\mathbb{C}_{s}(n)\rightarrow \mathbb{M}_{s}(n)$ by $\lambda \rightarrow h^{-1}(\lambda)=\bigcup_{i=1}^{k} h^{-1}(\lambda_{i}^{u_{i}})$. If $\lambda_{i} \equiv 0$ (mod $s$), then
\begin{displaymath}
h^{-1}(\lambda_{i}^{u_{i}})=\begin{cases} 
     u_{i}^{\lambda_{i}} & \text{if  $u_{i}$ is even,} \\
     (u_{i}-1)^{\lambda_{i}},\lambda_{i} & \text{if  $u_{i}$ is odd.}
    \end{cases}
\end{displaymath}
Else, if $\lambda_{i}\not\equiv 0$ (mod $s$), then when $\lambda_{i}=1$, let the unique expansion of the multiplicity $u_{i}=a_{0}+2a_{1}+2^{2}a_{2}+\dots+2^{r}a_{r}$  where $r\geq 0$ and $a_{i}\in \lbrace 0,1 \rbrace$, and define 
\[
h^{-1}(1^{u_{i}})=a_{0},2a_{1},2^{2}a_{2},\dots,2^{r}a_{r}.
\]
Now, when $\lambda_{i} \geq 3$, and for some odd $\ell_{i} \geq 1$, we have
\begin{displaymath}
h^{-1}(\lambda_{i}^{u_{i}})=\begin{cases} 
     \ell_{i}^{(u_{i}-1)s/2}, \lambda_{i} & \text{if  $u_{i}$ is odd and $\lambda_{i}=s\ell_{i}/2$, for all $\ell_{i}$,} \\
     (u_{i}-1)\lambda_{i}, \lambda_{i} & \text{if  $u_{i}$ is odd and  $\lambda_{i} \neq s \ell_{i}/2$, for all $\ell_{i}$,} \\
     \ell_{i}^{u_{i}s/2} & \text{if  $u_{i}$ is even and  $\lambda_{i}=s\ell_{i}/2$, for all $\ell_{i}$,} \\
     u_{i}\lambda_{i} & \text{if  $u_{i}$ is even and  $\lambda_{i}\neq s\ell_{i}/2$, for all $\ell_{i}$.} 
    \end{cases}
\end{displaymath}
As an illustration, for $(n,s)=(18,6)$ and the partition $\mu=(5^{2},3^{2},1^{2})$, we have
\[
h^{-1}(\mu) \rightarrow \Biggl\{ \begin{array}{l}
h^{-1}(5^{2}): u_{1}=2 \ \text{is even and} \ 5\neq 3\ell_{1} \rightarrow h^{-1}(5^{2})=10\\
h^{-1}(3^{2}) : u_{2}=2 \ \text{is even and} \ 3=3 \cdot 1 \rightarrow h^{-1}(3^{2})=1^{6}\\
h^{-1}(1^{2}): u_{3}=2^{1}\cdot 1 \rightarrow h^{-1}(1^{2})=2.
\end{array}
\]

\textbf{The bijection $\mathbb{M}_{s}(n)\Leftrightarrow \mathbb{D}_{s}(n)$ when $s\neq2^{p}$ for $p\geq 2$:}

Let $\lambda=(\lambda_{1}^{u_{1}}, \lambda_{2}^{u_{2}}, \dots, \lambda_{k}^{u_{k}})\in \mathbb{M}_{s}(n)$ and define the map $w:\mathbb{M}_{s}(n)\rightarrow \mathbb{D}_{s}(n)$ by $\lambda \rightarrow w(\lambda)=\bigcup_{i=1}^{k} w(\lambda_{i}^{u_{i}})$. If $\lambda_{i}$ is a non-multiple of $s/2$ with $u_{i} > 1$, then 
\begin{displaymath}
w(\lambda_{i}^{u_{i}})=\begin{cases} 
     (s\lambda_{i}/2)^{2u_{i}/s} & \text{if  $u_{i}\equiv 0$ (mod $s$),} \\
     (s\lambda_{i}/2)^{2(u_{i}-1)/s}, \lambda_{i} & \text{if  $u_{i}\equiv 1$ (mod $s$).}
    \end{cases}
\end{displaymath}
Else, 
\[
w(\lambda_{i}^{u_{i}})=\lambda_{i}^{u_{i}}.
\]
As an illustration, for $(n,s)=(18,6)$ and the partition $\lambda=(6, 2^{6})$, we have
\[
w(\lambda) \rightarrow \Biggl\{ \begin{array}{l}
w(6) \rightarrow 6 \equiv 0  \ ( \mathrm{mod} \ 3) \rightarrow w(6)=6\\
w(2^{6}) \rightarrow 2 \not\equiv 0  \ ( \mathrm{mod} \ 3) \ \text{and} \ u_{2}=6 \rightarrow w(2^{6}) = 6^{2}.
\end{array}
\]

Let $\lambda=(\lambda_{1}^{u_{1}}, \lambda_{2}^{u_{2}}, \dots, \lambda_{k}^{u_{k}})\in \mathbb{D}_{s}(n)$ and define the inverse map $w^{-1}:\mathbb{D}_{s}(n)\rightarrow \mathbb{M}_{s}(n)$ by $\lambda \rightarrow w(\lambda)=\bigcup_{i=1}^{k} w^{-1}(\lambda_{i}^{u_{i}})$. If $\lambda_{i} \equiv 0$ (mod $s/2$) and $u_{i}>1$, then $\lambda_{i}=s\ell_{i}/2$ where $\ell_{i}$ is a positive integer, and define
\begin{displaymath}
w^{-1}(\lambda_{i})=\begin{cases} 
    \ell_{i}^{su_{i}/2},  & \text{if  $u_{i}$ is even,} \\
    \ell_{i}^{s(u_{i}-1)/2}, \lambda_{i} & \text{if $u_{i}$ is odd.}
    \end{cases}
\end{displaymath}
Else, 
\[
w^{-1}(\lambda_{i}^{u_{i}})=\lambda_{i}^{u_{i}}.
\]
As an illustration, for $(n,s)=(18,6)$ and the partition $\mu=(6^{3})$, we have
\[
w^{-1}(\mu) \rightarrow 
w^{-1}(6^{3}) : 6=3\cdot 2 \rightarrow  w^{-1}(6^{3})=2^{6},6.
\]
\end{proof}

\begin{example} An illustration of the bijection $\mathbb{M}_{s}(n)\Leftrightarrow \mathbb{C}_{s}(n) \Leftrightarrow \mathbb{D}_{s}(n)$ is given for some of the partitions when $(s,n)=(4,10)$ in Table \ref{tab1}, and $(s,n)=(8,18)$ in Table \ref{tab2}.

\begin{table}[h!]
\caption{\label{tab1} The bijection of Theorem \ref{thmequivalence} for $n=10$, $s=4$.}
\begin{tabular}{@{}ccccc@{}}
\toprule
$\mathbb{M}_{4}(10)$   & $\Leftrightarrow$ & $\mathbb{D}_{4}(10)$   & $\Leftrightarrow$ & $\mathbb{C}_{4}(10)$ \\
\midrule
$(10)$        &                   & $(10)$        &                   & $(5^{2})$       \\ 
$(8,2)$       &                   & $(8,2)$       &                   & $(8,1^{2})$     \\ 
$(7,2,1)$     &                   & $(7,2,1)$     &                   & $(7,1^{3})$     \\ 
$(6,4)$       &                   & $(6,4)$       &                   & $(4,3^{2})$     \\ 
$(6,3,1)$     &                   & $(6,3,1)$     &                   & $(3^{3},1)$     \\
$(5,1^{5})$        &                   & $(5,2^{2},1)$        &                   & $(5,1^{5})$       \\ 
$(4,3,2,1)$   &                   & $(4,3,2,1)$   &                   & $(4,3,1^{3})$   \\
$(6,1^{4})$   &                   & $(6,2^{2})$   &                   & $(3^{2},1^{4})$ \\ 
$(4,2,1^{4})$ &                   & $(4,2^{3})$   &                   & $(4,1^{6})$     \\
$(3,2,1^{5})$ &                   & $(3,2^{3},1)$ &                   & $(3,1^{7})$     \\ 
$(2,1^{8})$   &                   & $(4^{2},2)$   &                   & $(1^{10})$      \\
$(2^{5})$     &                   & $(2^{5})$     &                   & $(4^{2},1^{2})$ \\
$(5, 1^{5})$     &                   & $(5, 2^{2}, 1)$     &                   & $(5, 1^{5})$ \\
$(9, 1)$     &                   & $(9, 1)$     &                   & $(9, 1)$ \\
$(5, 4, 1)$     &                   & $(5, 4, 1)$     &                   & $(5, 4, 1)$ \\
$(7, 3)$     &                   & $(7, 3)$     &                   & $(7, 3)$ \\
\botrule
\end{tabular}
\end{table}

\begin{table}[h!]
\caption{\label{tab2} The bijection of Theorem \ref{thmequivalence} for $n=18$, $s=8$.}
\begin{tabular}{@{}ccccc@{}}
\toprule
$\mathbb{M}_{8}(18)$   & $\Leftrightarrow$ & $\mathbb{D}_{8}(18)$   & $\Leftrightarrow$ & $\mathbb{C}_{8}(18)$ \\
\midrule
$(9,1^{9})$     &                   & $(9,4^{2},1)$   &                   & $(9,1^{9})$      \\ 
$(2^{9})$       &                   & $(4^{4},2)$     &                   & $(8^{2},1^{2})$  \\ 
$(8,2,1^{8})$   &                   & $(8,4^{2},2)$   &                   & $(8,1^{10})$     \\ 
$(7,3,1^{8})$   &                   & $(7,4^{2},3)$   &                   & $(7,3,1^{8})$    \\ 
$(7,2,1^{9})$   &                   & $(7,4^{2},2,1)$ &                   & $(7,1^{11})$     \\ 
$(10,1^{8})$    &                   & $(10,4^{2})$    &                   & $(5^{2},1^{8})$  \\ 
$(5,3,2,1^{8})$ &                   & $(5,4^{2},3,2)$ &                   & $(5,3,1^{10})$   \\ 
$(5,4,1^{9})$   &                   & $(5,4^{3},1)$   &                   & $(5,1^{13})$     \\ 
$(6,3,1^{9})$   &                   & $(6,4^{2},3,1)$ &                   & $(3^{3},1^{9})$  \\ 
$(6,4,1^{8})$   &                   & $(6,4^{3})$     &                   & $(3^{2},1^{12})$ \\ 
$(4,3,2,1^{9})$ &                   & $(4^{3},3,2,1)$ &                   & $(3,1^{15})$     \\ 
$(2,1^{16})$    &                   & $(8^{2},2)$     &                   & $(1^{18})$       \\ 
\botrule
\end{tabular}
\end{table}
\end{example}

\begin{example} An illustration of the bijection $\mathbb{M}_{s}(n)\Leftrightarrow \mathbb{C}_{s}(n) \Leftrightarrow \mathbb{D}_{s}(n)$ is given for some of the partitions when $(s,n)=(6,18)$ in Table \ref{tab3}, and $(s,n)=(10,15)$ in Table \ref{tab4}.

\begin{table}[h!]
\caption{\label{tab3} The bijection of Theorem \ref{thmequivalence} for $n=18$, $s=6$.}
\begin{tabular}{@{}ccccc@{}}
\toprule
$\mathbb{M}_{6}(18)$     & $\Leftrightarrow$ & $\mathbb{D}_{6}(18)$       & $\Leftrightarrow$ & $\mathbb{C}_{6}(18)$ \\
\midrule
$(12,1^{6})$    &                   & $(12,3^{2})$      &                   & $(12,3^{2})$       \\ 
$(10,2,1^{6})$  &                   & $(10,3^{2},2)$    &                   & $(5^{2},3^{2},1^{2})$ \\ 
$(9,3,1^{6})$   &                   & $(9,3^{3})$       &                   & $(9,3^{3})$        \\ 
$(8,4,1^{6})$   &                   & $(8,4,3^{2})$     &                   & $(3^{2},1^{12})$   \\ 
$(5,4,3,1^{6})$ &                   & $(5,4,3^{3})$     &                   & $(5,3^{3},1^{4})$  \\ 
$(11,1^{7})$    &                   & $(11,3^{2},1)$    &                   & $(11,3^{2},1)$     \\ 
$(8,3,1^{7})$   &                   & $(8,3^{3},1)$     &                   & $(3^{3},1^{9})$    \\ 
$(5,4,2,1^{7})$ &                   & $(5,4,3^{2},2,1)$ &                   & $(5,3^{2},1^{7})$  \\ 
$(6,1^{12})$    &                   & $(6,3^{4})$       &                   & $(6,3^{4})$        \\ 
$(5,1^{13})$    &                   & $(5,3^{4},1)$     &                   & $(5,3^{4},1)$      \\ 
$(4,2,1^{12})$  &                   & $(4,3^{4},2,1)$   &                   & $(3^{4},1^{6})$    \\ 
$(1^{18})$      &                   & $(3^{6})$         &                   & $(3^{6})$          \\ 
$(6,2^{6})$     &                   & $(6^{3})$         &                   & $(6^{3})$          \\ 
$(5,2^{6},1)$   &                   & $(6^{2},5,1)$     &                   & $(6^{2},5,1)$      \\ 
$(4,2^{7})$     &                   & $(6^{2},4,2)$     &                   & $(6^{2},1^{6})$    \\ 
$(3,2^{7},1)$   &                   & $(6^{2},3,2,1)$   &                   & $(6^{2},3,1^{3})$  \\ 
$(3^{6})$       &                   & $(9^{2})$         &                   & $(9^{2})$          \\
$(16,2)$        &                   & $(16,2)$          &                   & $(1^{18})$         \\ 
$(14,4)$        &                   & $(14,4)$          &                   & $(7^{2},1^{4})$    \\ 
\botrule
\end{tabular}
\end{table}

\begin{table}[h!]
\caption{\label{tab4} The bijection of Theorem \ref{thmequivalence} for $n=15$, $s=10$.}
\begin{tabular}{@{}ccccc@{}}
\toprule
$\mathbb{M}_{10}(15)$     & $\Leftrightarrow$ & $\mathbb{D}_{10}(15)$       & $\Leftrightarrow$ & $\mathbb{C}_{10}(15)$  \\
\midrule
$(14,1)$    &                   & $(14,1)$       &                   & $(7^{2},1)$       \\ 
$(12,3)$  &                   & $(12,3)$    &                   & $(3^{5})$ \\ 
$(12,2,1)$   &                   & $(12,2,1)$       &                   & $(3^{4},1^{3})$        \\ 
$(8,6,1)$    &                   & $(8,6,1)$     &                   & $(3^{2},1^{9})$     \\ 
$(8,5,2)$   &                   & $(8,5,2)$      &                   & $(5,1^{10})$    \\ 
$(8,4,2,1)$    &                   & $(8,4,2,1)$       &                   & $(1^{15})$        \\ 
$(7,6,2)$    &                   & $(7,6,2)$     &                   & $(7,3^{2},1^{2})$      \\ 
$(6,5,4)$      &                   & $(6,5,4)$         &                   & $(5,3^{2},1^{4})$          \\ 
$(6,5,3,1)$     &                   & $(6,5,3,1)$         &                   & $(5,3^{3},1)$          \\ 
$(6,4,3,2)$   &                   & $(6,4,3,2)$     &                   & $(3^{3},1^{6})$      \\ 
$(3,2,1^{10})$   &                   & $(5^{2},3,2)$   &                   & $(5^{2},3,1^{2})$  \\ \
$(4,1^{11})$       &                   & $(5^{2},4,1)$         &                   & $(5^{2},1^{5})$          \\ 
$(5, 4, 3, 2, 1)$       &                   & $(5, 4, 3, 2, 1)$        &                   & $(5, 3, 1^{7})$          \\ 
$(7, 4, 3, 1)$       &                   & $(7, 4, 3, 1)$        &                   & $(7, 3, 1^{5})$          \\ 
\botrule
\end{tabular}
\end{table}
\end{example}

It is well known that recurrence relations play a crucial role in computing the number of partitions of every positive integer $n$ by using compact and elegant formulas
for the number of partitions of large positive integers. Therefore, we establish recurrence relations for the three classes. Let $M_{s}(n,k)$ denote the number of $s$-modular partitions of $n$ into $k$ parts. By convention we define $M_{s}(0,0)=1$.

\begin{theorem} For every positive integers $n,k \geq 1$,
\[
M_{s}(n,k)=\sum\limits_{\substack{ \ell=0 \\ \ell\equiv 0,1\ (\mathrm{mod}\ s)} }^{k} M_{s}(n-k,k-\ell).
\]
\end{theorem}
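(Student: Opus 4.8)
The plan is to prove the recurrence by a bijection built on the classical operation of subtracting $1$ from every part, with the multiplicity of the smallest admissible part $1$ playing the role of the summation index $\ell$. For a partition $\lambda$, write $m_v(\lambda)$ for the number of occurrences of the part $v$. Let $\lambda$ be an $s$-modular partition of $n$ into exactly $k$ parts and set $\ell := m_1(\lambda)$. Because $\lambda$ is $s$-modular, $\ell \equiv 0,1 \pmod{s}$, and trivially $0 \le \ell \le k$. Subtracting $1$ from each of the $k$ parts yields a partition $\mu$ of $n-k$; the $\ell$ parts equal to $1$ become $0$ and vanish, so $\mu$ has exactly $k-\ell$ parts.

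The crucial point is that this operation simply shifts the multiplicity sequence: for every $v \ge 1$ a part of $\mu$ equals $v$ precisely when the corresponding part of $\lambda$ equals $v+1$, so $m_v(\mu) = m_{v+1}(\lambda)$. Since every multiplicity of $\lambda$ is $\equiv 0$ or $1 \pmod{s}$, the same holds for every multiplicity of $\mu$, and therefore $\mu$ is again $s$-modular. In this way each $\lambda$ is sent to a pair $(\mu,\ell)$ with $\ell \equiv 0,1 \pmod{s}$ and $\mu \in \mathbb{M}_s(n-k)$ having $k-\ell$ parts.

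For the reverse direction I would take such a pair $(\mu,\ell)$, add $1$ to every part of $\mu$, and then append $\ell$ copies of the part $1$. This produces a partition of $(n-k)+(k-\ell)+\ell = n$ into $(k-\ell)+\ell = k$ parts whose multiplicities satisfy $m_w = m_{w-1}(\mu)$ for $w \ge 2$ and $m_1 = \ell$; both are $\equiv 0,1 \pmod{s}$, so the result is $s$-modular. The two maps are visibly mutually inverse, giving a bijection
\[
\{\lambda \in \mathbb{M}_s(n) : \lambda \text{ has } k \text{ parts}\} \;\longleftrightarrow\; \bigsqcup_{\substack{0 \le \ell \le k \\ \ell \equiv 0,1\ (\mathrm{mod}\ s)}} \{\mu \in \mathbb{M}_s(n-k) : \mu \text{ has } k-\ell \text{ parts}\},
\]
and counting both sides yields the stated recurrence.

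The only delicate point -- and the main obstacle -- is to recognize that the restriction $\ell \equiv 0,1 \pmod{s}$ in the sum is not an imposed hypothesis but is forced: the number $\ell$ of parts that disappear under the subtraction is exactly the multiplicity $m_1(\lambda)$, which the $s$-modular condition constrains to lie in $\{0,1\} \pmod{s}$. One should also confirm that the boundary cases behave correctly, in particular $\ell = k$ (when $\lambda = (1^{k})$, giving the empty partition counted by $M_s(0,0)=1$) together with the convention $M_s(m,\cdot) = 0$ for $m < 0$, so that every term in the sum is accounted for.
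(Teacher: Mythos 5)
Your proposal is correct and follows essentially the same route as the paper: both arguments peel off the $1$'s (whose multiplicity $\ell$ is forced to be $\equiv 0,1 \pmod{s}$ by the $s$-modular condition) and subtract $1$ from the remaining parts to land in $\mathbb{M}_{s}(n-k,k-\ell)$. Your write-up is merely more explicit than the paper's, spelling out the multiplicity shift $m_{v}(\mu)=m_{v+1}(\lambda)$, the inverse map, and the boundary case $\lambda=(1^{k})$.
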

 
\begin{proof}
In every partition $\lambda\in \mathbb{M}_{s}(n,k)$, the part $1$ has a multiplicity of $\ell$ for some $\ell \equiv 0,1$ (mod $s$). Removing the $1$'s from $\lambda$ and subtracting $1$ from its other parts yields a partition of $n-k$ into $k-\ell$ parts. If the partition $\lambda$ consists only of parts of size $1$, then we obtain a partition of $0$, that is, $M_{s}(0,0)=1$.
\end{proof}
 The following array contains some values of $M_{4}(n,k)$ for $n \leq 20$ and $k \leq 15$.
 
 \begin{table}[h!]
\caption{Some values of $M_{4}(n,k)$ for $n \leq 20$ and $k \leq 15$.}
\begin{tabular}{@{}cccccccccccccccc@{}}
\toprule
$n$ & &  & & & & & & & & & & & & &  \\
\toprule
1 &1&&&&&&\\
2 &1&&&&&&\\
3 &1&1&&&&&\\
4 &1&1&&1&&&\\
5 &1&2&&&1&&\\
6 &1&2&1&&1&\\
7 &1&3&1&&1&1&\\
8 &1&3&2&1&1&1&&1&\\
9 &1&4&3&&2&2&&&1&\\
10 &1&4&4&1&2&2&1&&1&\\
11 &1&5&5&1&2&4&1&&1&1&\\
12 &1&5&7&3&2&4&2&1&1&1&&1&\\
13 &1&6&8&3&3&6&3&&2&2&&&1&\\
14 &1&6&10&5&3&6&5&1&2&2&1&&1&\\
15 &1&7&12&6&4&9&6&1&2&4&1&&1&1&\\
16 &1&7&14&10&4&9&9&&2&4&2&1&1&1&\\
17 &1&8&16&11&5&13&11&3&4&8&1&&2&2&\\
18 &1&8&19&15&7&12&15&6&4&6&5&1&2&2&1\\
19 &1&9&21&18&9&16&18&7&5&10&6&1&2&4&1\\
20 &1&9&24&24&11&16&23&13&5&10&9&4&2&4&2\\
\toprule
$k$ & 1 & 2 &3&4&5&6&7&8&9&10&11&12&13&14&15\\
\botrule
\end{tabular}
\end{table}

\begin{example} For $(s,n,k)=(4,20,8)$, we have
\begin{align*} 
M_{4}(20,8)&=M_{4}(12,8)+M_{4}(12,7)+M_{4}(12,4)+M_{4}(12,3)\\
&=1+2+3+7\\
&=13.
\end{align*}

\begin{table}[h!]
\begin{tabular}{|c|c|c|c|c|}
\hline
$\mathbb{M}_{4}(20,8)$     & $\mathbb{M}_{4}(12,8)$   & $\mathbb{M}_{4}(12,7)$ & $\mathbb{M}_{4}(12,4)$ & $\mathbb{M}_{4}(12,3)$ \\ \hline
$(3^{4},2^{4})$   & $(2^{4},1^{4})$ &               &               &               \\ \hline
$(6,3,2^{5},1)$   &                 & $(5,2,1^{5})$ &               &               \\ \hline
$(6,3,2^{5},1)$   &                 & $(4,3,1^{5})$ &               &               \\ \hline
$(7,4,3,2,1^{4})$ &                 &               & $(6,3,2,1)$   &               \\ \hline
$(6,5,3,2,1^{4})$ &                 &               & $(5,4,2,1)$   &               \\ \hline
$(4^{4},1^{4})$   &                 &               & $(3^{4})$     &               \\ \hline
$(10,3,2,1^{5})$  &                 &               &               & $(9,2,1)$     \\ \hline
$(9,4,2,1^{5})$   &                 &               &               & $(8,3,1)$     \\ \hline
$(8,5,2,1^{5})$   &                 &               &               & $(7,4,1)$     \\ \hline
$(8,4,3,1^{5})$   &                 &               &               & $(7,3,2)$     \\ \hline
$(7,6,2,1^{5})$   &                 &               &               & $(6,5,1)$     \\ \hline
$(7,5,3,1^{5})$   &                 &               &               & $(6,4,2)$     \\ \hline
$(6,5,4,1^{5})$   &                 &               &               & $(5,4,3)$     \\ \hline
\end{tabular}
\caption{An illustration of the construction of the set $\mathbb{M}_{4}(20,8)$.}
\end{table}
\end{example}

Let $C_{s}(n,k)$ denote the number of $s$-congruent partitions of $n$ into $k$ parts such that $C_{s}(n,k):=\mid \mathbb{C}_{s}(n,k) \mid$. First, we introduce some definitions that are needed for the proof of the next recurrence relation. Let $N(s)$ be the set consisting of $s$ and all the odd positive integers less than $s$, and let $N'(\ell)$ be the set consisting of all the odd positive integers less than $\ell \in N(s)$. Denote by $\mathbb{C}^{\ell}_{s}(n,k)$ the set of all the partitions $\lambda \in \mathbb{C}_{s}(n,k)$ with at least one part of size $\ell \in N(s)$ as the smallest part in the partition, and by $\mathbb{C}^{s+1}_{s}(n,k)$ the set of all the partitions $\lambda \in \mathbb{C}_{s}(n,k)$ into parts of sizes greater than $s$. Therefore, we have the following dissection of $\mathbb{C}_{s}(n,k)$ into $\mid N(s) \mid +1$ disjoint subsets
\begin{equation}
\mathbb{C}_{s}(n,k)=\mathbb{C}_{s}^{1}(n,k)\cup \mathbb{C}_{s}^{3}(n,k)\cup\dots\cup\mathbb{C}_{s}^{s}(n,k)\cup\mathbb{C}_{s}^{s+1}(n,k). \label{dissection}
\end{equation}
Consider the subset $\mathbb{C}_{s}(n-\ell,k-1)\setminus \cup_{i\in N'(\ell)} \mathbb{C}_{s}^{i}(n-\ell,k-1)$ which contains all the partitions of $n-\ell$ into $k-1$ parts of sizes $\geq \ell$, where $\ell \in N(s)$. To obtain partitions of $n-\ell$ into $k-1$ parts of sizes $\geq \ell$, we exclude those with at least one part of size $i\in N'(\ell)$ as the smallest part from $\mathbb{C}_{s}(n-\ell,k-1)$. Conversely, by adding one part of size $\ell \in N(s)$ to each partition $\lambda \in \mathbb{C}_{s}(n-\ell,k-1)\setminus \cup_{i\in N'(\ell)} \mathbb{C}_{s}^{i}(n-\ell,k-1)$, we arrive at $\mathbb{C}^{\ell}_{s}(n,k)$. Therefore, for all $\ell \in N(s)$ we deduce that
\[
\mid \mathbb{C}_{s}^{\ell}(n,k) \mid = \mid \mathbb{C}_{s}(n-\ell,k-1)\setminus \cup_{i\in N'(\ell)} \mathbb{C}_{s}^{i}(n-\ell,k-1) \mid.
\]
Similarly, by adding $s$ to each part of $\lambda\in \mathbb{C}_{s}(n-sk,k)$, we get $\mathbb{C}_{s}^{s+1}(n,k)$. Therefore, we find that
\[
\mid \mathbb{C}_{s}^{s+1}(n,k) \mid= \mid \mathbb{C}_{s}(n-sk,k) \mid.
\]
Then, we deduce the following relations:
\[
C_{s}^{\ell}(n,k)=C_{s}(n-\ell,k-1)-\sum_{i\in N'(\ell)}C_{s}^{i}(n-\ell,k-1) \ \text{and} \ C_{s}^{s+1}(n,k) = C_{s}(n-sk,k).
\] 
By convention we define $C_{s}(0,0)=1$ and $C_{s}^{1}(n,k)=C_{s}(n-1,k-1)$.

\begin{theorem} For every positive integers $n,k \geq 1$, we have
\[
C_{s}(n,k)=\sum\limits_{\ell\in N(s)} C_{s}^{\ell}(n,k)+C_{s}^{s+1}(n,k).
\]
Moreover, for $\ell \in N(s)$, we have
\[
C_{s}^{\ell}(n,k)=C_{s}(n-\ell,k-1)-\sum_{i\in N'(\ell)}C_{s}^{i}(n-\ell,k-1) \ \text{and} \ C_{s}^{s+1}(n,k)=C_{s}(n-sk,k).
\] 
\end{theorem}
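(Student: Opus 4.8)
The plan is to read off both assertions from the structural description of $s$-congruent partitions together with the block decomposition \eqref{dissection} that is set up just before the statement. By the defining residue condition (equivalently Theorem \ref{thmcongruent}), $\lambda$ is $s$-congruent exactly when every part is either odd or a multiple of $s$, since $s$ is even and the admissible residues mod $s$ are $0,1,3,5,\dots,s-1$. Consequently the only admissible part sizes that do not exceed $s$ are the odd numbers $1,3,\dots,s-1$ together with $s$ itself, i.e.\ precisely the elements of $N(s)$, while every other admissible size is $>s$. Sorting each $\lambda\in\mathbb{C}_s(n,k)$ by its (unique) smallest part therefore places it in exactly one class $\mathbb{C}_s^\ell(n,k)$ with $\ell\in N(s)$ (smallest part $\leq s$) or in $\mathbb{C}_s^{s+1}(n,k)$ (all parts $>s$). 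First I would record that this sorting is exhaustive and into pairwise disjoint blocks, which is exactly \eqref{dissection}; summing cardinalities then gives $C_s(n,k)=\sum_{\ell\in N(s)}C_s^\ell(n,k)+C_s^{s+1}(n,k)$.

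For the evaluation of $C_s^\ell(n,k)$ with $\ell\in N(s)$, I would use the deletion map that removes one copy of the smallest part $\ell$. It sends $\mathbb{C}_s^\ell(n,k)$ bijectively onto the set of $s$-congruent partitions of $n-\ell$ into $k-1$ parts all of size $\geq\ell$, the inverse simply re-adjoining a part $\ell$ (legal because $\ell$ is itself an admissible size). The remaining point is to discard those partitions in $\mathbb{C}_s(n-\ell,k-1)$ whose smallest part is $<\ell$. Here the structural fact enters again: any admissible part smaller than $\ell\leq s$ must be odd, because the smallest admissible even size is $s\geq\ell$. Hence such a smallest part lies in $N'(\ell)$, the partitions to be removed are exactly those in $\cup_{i\in N'(\ell)}\mathbb{C}_s^i(n-\ell,k-1)$, and since these blocks are indexed by distinct smallest parts they are disjoint. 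Subtracting their sizes yields $C_s^\ell(n,k)=C_s(n-\ell,k-1)-\sum_{i\in N'(\ell)}C_s^i(n-\ell,k-1)$, with $N'(1)=\emptyset$ recovering the convention $C_s^1(n,k)=C_s(n-1,k-1)$.

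For $C_s^{s+1}(n,k)$ I would use the uniform shift subtracting $s$ from each of the $k$ parts. Since every part exceeds $s$, all image parts stay positive, admissibility is preserved (subtracting the even number $s$ keeps odd parts odd and multiples of $s$ multiples of $s$), and the total drops by $sk$. This is a bijection onto $\mathbb{C}_s(n-sk,k)$, the inverse adding $s$ to each part so that every image part again exceeds $s$; hence $C_s^{s+1}(n,k)=C_s(n-sk,k)$. The two shift/deletion bijections are routine. The one step that genuinely requires the hypothesis on $s$ is the classification of the admissible small parts: it is the absence of even parts below $s$ that makes $N(s)$ the correct global index set and $N'(\ell)$ the correct local correction set, and that forces the discarded blocks to be disjoint so that a plain sum, rather than a full inclusion–exclusion, suffices. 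I expect verifying this classification and the resulting disjointness to be the only real content, with the cardinality bookkeeping following immediately.
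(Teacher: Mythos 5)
Your proposal is correct and follows essentially the same route as the paper: the dissection of $\mathbb{C}_{s}(n,k)$ by smallest part, deletion of one copy of the smallest part $\ell\in N(s)$ (with the excluded blocks $\mathbb{C}_{s}^{i}(n-\ell,k-1)$, $i\in N'(\ell)$, accounting for partitions with a part below $\ell$), and the uniform shift by $s$ for the class with all parts exceeding $s$. Your explicit justification that admissible parts below $s$ are necessarily odd, and that the discarded blocks are pairwise disjoint, makes precise what the paper leaves implicit, but the argument is the same.
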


\begin{proof} Let the sets $N(s)=\lbrace 1,3,5,\dots, (s-1), s\rbrace$ and $N'(\ell)=\lbrace 1,3,5,\dots, m\rbrace$ where $m<\ell \in N(s)$, and consider the dissection $\eqref{dissection}$ of $\mathbb{C}_{s}(n,k)$. Then, for evey partition $\lambda \in \mathbb{C}^{\ell}_{s}(n,k)$, remove one part of size $\ell$ to obtain a partition of $n-\ell$ into $k-1$ parts, that is, a partition $\lambda\in \mathbb{C}_{s}(n-\ell,k-1)\setminus \cup_{i\in N'(\ell)} \mathbb{C}_{s}^{i}(n-\ell,k-1)$. If $\lambda \in \mathbb{C}^{s+1}_{s}(n,k)$, subtract $s$ from each of the $k$ parts of $\lambda$ to obtain a partition of $n-sk$ into $k$ parts. Therefore, we have
\[
C_{s}^{\ell}(n,k)=C_{s}(n-\ell,k-1)-\sum_{i\in N'(\ell)}C_{s}^{i}(n-\ell,k-1) \ \text{and} \ C_{s}^{s+1}(n,k) = C_{s}(n-sk,k).
\]
\end{proof}

\begin{table}[h!]
\caption{Some values of $C_{4}(n,k)$ for $n \leq 20$ and $k \leq 15$.}
\begin{tabular}{@{}cccccccccccccccc@{}}
\toprule
$n$ & &  & & & & & & & & & & & & &  \\
\toprule
1 &1&\\
2 &&1&\\
3 &1&&1&\\
4 &1&1&&1&\\
5 &1&1&1&&1&\\
6 &&2&1&1&&1&\\
7 &1&1&2&1&1&&1&\\
8 &1&3&1&2&1&1&&1&\\
9 &1&2&4&1&2&1&1&&1&\\
10 &&3&3&4&1&2&1&1&&1&\\
11 &1&2&5&3&4&1&2&1&1&&1&\\
12 &1&4&4&6&3&4&1&2&1&1&&1&\\
13 &1&3&7&5&6&3&4&1&2&1&1&&1&\\
14 &&4&6&9&5&6&3&4&1&2&1&1&&1&\\
15 &1&3&9&8&10&5&6&3&4&2&2&1&1& &1\\
16 &1&6&7&13&9&10&5&6&3&4&1&2&1&1&\\
17 &1&4&12&11&15&9&10&5&6&3&4&1&2&1&1\\
18 &&5&10&18&13&16&9&10&5&6&3&4&1&2&1\\
19 &1&4&14&16&22&14&16&9&10&5&6&3&4&1&2\\
20 &1&7&12&23&21&24&14&16&9&10&5&6&3&4&1\\
\toprule
$k$ & 1 & 2 &3&4&5&6&7&8&9&10&11&12&13&14&15\\
\botrule
\end{tabular}
\end{table}
For brevity, let $\mathbb{K}_{\geq\ell}(n-\ell,k-1)=\mathbb{C}_{s}(n-\ell,k-1)\setminus \cup_{i\in N'(\ell)} \mathbb{C}_{s}^{i}(n-\ell,k-1)$.
\begin{example} For $(s,n,k)=(4,20,4)$, we have
\[
C_{4}(20,4)=C_{4}^{1}(20,4)+C_{4}^{3}(20,4)+C_{4}^{4}(20,4)+C_{4}^{5}(20,4)=14+6+2+1=23,
\]
such that
\begin{itemize}
\item $C_{4}^{1}(20,4)=C_{4}(19,3)=14$,
\item $C_{4}^{3}(20,4)=C_{4}(17,3)-C_{4}^{1}(17,3)=C_{4}(17,3)-C_{4}(16,2)=12-6=6$,
\item $C_{4}^{4}(20,4)=C_{4}(16,3)-C_{4}^{1}(16,3)-C_{4}^{3}(16,3)=C_{4}(16,3)-C_{4}(15,2)-( C_{4}(13,2)-C_{4}(12,1))=7-3-(3-1)=2$,
\item $C_{4}^{5}(20,4)=C_{4}(4,4)=1$.
\end{itemize}

\begin{table}[h!]
\begin{tabular}{|c|c|c|c|c|}
\hline
$\mathbb{C}_{4}(20,4)$     & $\mathbb{K}_{\geq 1}(19,3)$   & $\mathbb{K}_{\geq 3}(17,3)$ & $\mathbb{K}_{\geq 4}(16,3)$ & $\mathbb{C}_{4}(4,4)$ \\ \hline
$(17,1^{3})$   &       $(17,1^{2})$           &               &               &               \\ \hline
$(15,3,1^{2})$   &      $(15,3,1)$           &               &               &               \\ \hline
$(13,5,1^{2})$   &        $(13,5,1)$         &               &               &               \\ \hline
$(13,3^{2},1)$ &        $(13,3^{2})$         &               &               &               \\ \hline
$(12,4,3,1)$ &        $(12,4,3)$         &               &               &               \\ \hline
$(17,1^{3})$  &        $(17,1^{2})$         &               &               &               \\ \hline
$(11,7,1^{2})$  &     $(11,7,1)$            &               &               &     \\ \hline
$(11,5,3,1)$   &       $(11,5,3)$          &               &               &     \\ \hline
$(11,4^{2},1)$   &        $(11,4^{2})$          &               &               &     \\ \hline
$(9^{2},1^{2})$   &        $(9^{2},1)$         &               &               &     \\ \hline
$(9,7,3,1)$   &       $(9,7,3)$          &               &               &      \\ \hline
$(9,5^{2},1)$   &     $(9,5^{2})$            &               &               &     \\ \hline
$(8^{2},3,1)$   &      $(8^{2},3)$           &               &               &      \\ \hline
$(8,7,4,1)$ &       $(8,7,4)$          &               &               &               \\ \hline
$(7^{2},5,1)$   &        $(7^{2},5)$         &               &               &               \\ \hline
$(11,3^{3})$  &                &       $(11,3^{2})$        &               &     \\ \hline
$(9,5,3^{2})$   &                 &     $(9,5,3)$          &               &      \\ \hline
$(9,4^{2},3)$   &                 &     $(9,4^{2})$          &               &      \\ \hline
$(8,5,4,3)$   &                 &      $(8,5,4)$          &               &     \\ \hline
$(7^{2},3^{2})$   &                 &    $(7^{2},3)$           &               &     \\ \hline
$(7,5^{2},3)$   &                 &        $(7,5^{2})$       &               &     \\ \hline
$(8,4^{3})$   &                &               &     $(8,4^{2})$           &     \\ \hline
$(7,5,4^{2})$   &                 &               &     $(7,5,4)$          &     \\ \hline
$(5^{4})$   &                 &               &               &   $(1^{4})$  \\ \hline
\end{tabular}
\caption{An illustration of the construction of $\mathbb{C}_{4}(20,4)$.}
\end{table}
\end{example}
In the next theorem we use the union operation on partitions, where figure \ref{youngdiagramoftheunion} shows the definition of the operation using Ferrers diagram of the partitions $\lambda=(4)$ and $\beta=(3, 2)$.
\begin{figure} [h!]
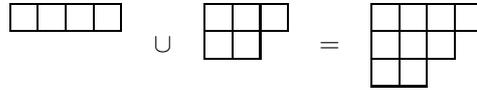

\begin{center}
\ytableausetup
{mathmode, boxframe=normal, boxsize=1em}
\begin{ytableau}

 & & & &  \none[] &  \none[] & \none[]  & & & &  \none[] & \none[] & \none[] & & & &\\
\none[] &  \none[] &  \none[] &  \none[] &  \none[] & \none[\cup] & \none[]  & & &  \none[] &  \none[] & \none[=] & \none[] & & &\\
\none[] &  \none[] &  \none[] &  \none[] &  \none[] &  \none[] &  \none[] &  \none[] &  \none[] &  \none[] &  \none[] &  \none[] &   \none[] & & \\
\end{ytableau}
\end{center}
\caption{Ferrers diagram of $\lambda \cup \beta=(4, 3, 2)$. \label{youngdiagramoftheunion}}
\end{figure}

Let $\mathbb{D}_{s}(n,k)$ denote the set of $s$-duplicate partitions of $n$ into $k$ parts such that $D_{s}(n,k):=\mid \mathbb{D}_{s}(n,k) \mid$. By convention we define $D_{s}(0,0)=1$. Let $\mathbb{O}(s)$ denote the set of all partitions into distinct parts less than or equal to $s/2-1$. Let $\mathbb{A}(s)=\mathbb{O}(s) \cup \lbrace \emptyset \rbrace$, where by convention we define the empty set as the empty partition and $\mid \mathbb{A}(s) \mid = 2^{s/2-1}$. Now, consider the partition $\alpha^{j} \in \mathbb{A}(s)$, where $0\leq \mid \alpha^{j} \mid \leq \frac{(s/2-1)(s/2)}{2}$ and $0 \leq \ell(\alpha^{j}) \leq s/2-1$ for $1\leq j \leq 2^{s/2-1}$. By convention we consider the length of the partition of $\emptyset$ to be $0$ along with its value. Moreover, consider the partition $\gamma^{j}$ such that $\lambda= \gamma^{j} \cup \alpha^{j}$ for some $\lambda \in \mathbb{D}_{s}(n,k)$. There are two kinds of partitions $\lambda \in \mathbb{D}_{s}(n,k)$. The first kind consists of partitions with at least one part of size $s/2$, while the second one consists of partitions into exactly $k-\ell(\alpha^{j})$ parts of sizes greater than $s/2$ and $\ell(\alpha^{j})$ parts of sizes $\lambda_{i} \leq s/2-1$. That is, the partitions of the second kind are of the form $\lambda= \gamma^{j} \cup \alpha^{j}$ for $1\leq j \leq 2^{s/2-1}$. We denote by $\mathbb{D}_{s}(n,k;s/2)$ the subset of the first kind and by $\mathbb{D}_{s}(n,k;\alpha^{j})$ the subset of the second. The dissection of $\mathbb{D}_{s}(n,k)$ into $2^{s/2-1}+1$ disjoint subsets is given by
\begin{equation}
\mathbb{D}_{s}(n,k)= \mathbb{D}_{s}(n,k;s/2) \cup \mathbb{D}_{s}(n,k;\alpha^{1}) \cup \cdots \cup \mathbb{D}_{s}(n,k;\alpha^{2^{s/2-1}}). \label{dissection1}
\end{equation}

\begin{theorem} \label{thmduplicate} For every positive integers $n,k \geq 1$,
\[
D_{s}(n,k)=D_{s}(n-s/2,k-1)+\sum_{\alpha^{j} \in \mathbb{A}(s)}D_{s} \Big( n-\frac{s(k-\ell(\alpha^{j}))}{2}-\mid \alpha^{j} \mid,k-\ell(\alpha^{j}) \Big).
\]
\end{theorem}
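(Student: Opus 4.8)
The plan is to count $D_s(n,k)$ by exploiting the dissection $\eqref{dissection1}$ and exhibiting a size-preserving bijection for each of its blocks. The structural fact that makes $\eqref{dissection1}$ legitimate is that in any $s$-duplicate partition no part smaller than $s/2$ can repeat: a part of multiplicity greater than one is forced to be a multiple of $s/2$, hence to be at least $s/2$. Consequently the parts of any $\lambda\in\mathbb{D}_s(n,k)$ lying below $s/2$ are automatically distinct and bounded by $s/2-1$, so they constitute some $\alpha^j\in\mathbb{A}(s)$. Thus the classification ``contains a part equal to $s/2$'' versus ``contains no part equal to $s/2$, with sub-partition of small parts equal to $\alpha^j$'' genuinely splits $\mathbb{D}_s(n,k)$ into the $2^{s/2-1}+1$ pairwise disjoint blocks of $\eqref{dissection1}$. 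I would first record this and then evaluate $\mid\mathbb{D}_s(n,k;s/2)\mid$ and each $\mid\mathbb{D}_s(n,k;\alpha^j)\mid$ separately.

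For the first block, deleting a single part equal to $s/2$ maps $\mathbb{D}_s(n,k;s/2)$ into $\mathbb{D}_s(n-s/2,k-1)$; conversely, adjoining a part $s/2$ to an arbitrary member of $\mathbb{D}_s(n-s/2,k-1)$ produces a partition of $n$ into $k$ parts that remains $s$-duplicate, since the new part is a multiple of $s/2$ and may repeat freely without violating the defining condition. These two operations are mutually inverse, giving $\mid\mathbb{D}_s(n,k;s/2)\mid=D_s(n-s/2,k-1)$, the first summand.

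The core of the argument is the second block. Fixing $\alpha^j\in\mathbb{A}(s)$, I would write $\lambda=\gamma^j\cup\alpha^j$, where $\gamma^j$ collects the $k-\ell(\alpha^j)$ parts exceeding $s/2$ (these are $\ge s/2+1$, as the block excludes the value $s/2$), and then subtract $s/2$ from every part of $\gamma^j$. This keeps all parts $\ge 1$, preserves multiplicities, and preserves residues modulo $s/2$, so the $s$-duplicate condition survives; the image is an $s$-duplicate partition into $k-\ell(\alpha^j)$ parts of total size $\mid\gamma^j\mid-\tfrac{s}{2}\bigl(k-\ell(\alpha^j)\bigr)=n-\mid\alpha^j\mid-\tfrac{s(k-\ell(\alpha^j))}{2}$. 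The inverse adds $s/2$ back to each part, restoring parts $\ge s/2+1$, and then takes the union with $\alpha^j$; because $\alpha^j$ has parts $\le s/2-1$ while the restored parts are $\ge s/2+1$, the two size-ranges are separated by the gap at $s/2$, no new repetition is created, and the result again lies in $\mathbb{D}_s(n,k;\alpha^j)$. Hence $\mid\mathbb{D}_s(n,k;\alpha^j)\mid=D_s\bigl(n-\tfrac{s(k-\ell(\alpha^j))}{2}-\mid\alpha^j\mid,\,k-\ell(\alpha^j)\bigr)$.

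Summing these cardinalities over the disjoint blocks of $\eqref{dissection1}$ then yields the stated recurrence. I expect the only delicate point to be the well-definedness and surjectivity of the second bijection: one must verify both that subtracting $s/2$ never destroys positivity (ensured by the parts being $>s/2$) nor the multiplicity constraint (ensured by invariance of residues modulo $s/2$), and that re-attaching $\alpha^j$ cannot collide with the enlarged parts (ensured by the $s/2$-gap). The boundary conventions $D_s(0,0)=1$ and $D_s(m,\cdot)=0$ for $m<0$ dispose of the degenerate cases, for instance when a block turns out to be empty.
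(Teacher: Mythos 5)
Your proposal is correct and follows essentially the same route as the paper: the same dissection \eqref{dissection1}, the same deletion of a part $s/2$ for the first block, and the same operation of removing $\alpha^{j}$ and subtracting $s/2$ from the remaining parts for the other blocks. You merely supply more of the verification (distinctness of the small parts, preservation of residues modulo $s/2$, the gap at $s/2$ ensuring invertibility) that the paper leaves implicit.
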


\begin{proof} Consider the dissection $\eqref{dissection1}$ of $\mathbb{D}_{s}(n,k)$. For every partition $\lambda\in\mathbb{D}_{s}(n,k;s/2)$, remove one part of size $s/2$ to obtain a partition of $n-s/2$ into $k-1$ parts. On the other hand, for $1\leq j \leq 2^{s/2-1}$ and for every $\lambda\in\mathbb{D}_{s}(n,k;\alpha^{j})$, subtract $s/2$ from the $k-\ell(\alpha^{j})$ parts greater than $s/2$ and remove $\alpha^{j}$ to obtain a partition of $n-s(k-\ell(\alpha^{j}))/2-\mid \alpha^{j} \mid$ into $k-\ell(\alpha^{j})$ parts.
\end{proof}

\begin{table}[h!]
\caption{Some values of $D_{4}(n,k)$ for $n\leq 14$ and $k\leq 7$.}
\begin{tabular}{@{}cccccccccccccccccc@{}}
\toprule
& & & & & & & & & & & & & &  & &\\
\toprule
& & & & & 1 &1&\\
& & & & &2 &1&\\
& & & & &3 &1&1&\\
& & & & &4 &1&2&\\
& & & & &5 &1&2&1&\\
& & & & &6 &1&2&2&\\
& & & & &7 &1&3&2&1&\\
& & & & &8 &1&4&3&2&\\
& & & & &9 &1&4&5&2&1&\\
& & & & &10 &1&4&6&3&2&\\
& & & & &11 &1&5&7&5&2&1&\\
& & & & &12 &1&6&9&7&3&2&\\
& & & & &13 &1&6&11&9&5&2&1&\\
& & & & &14 &1&6&13&11&7&3&2&\\
\toprule
& & & & & $k$ &1&2&3&4&5&6&7& & & & &\\
\botrule
\end{tabular}
\end{table}

\begin{example} \label{example4duplicate} For $s=4$, we have the set $\mathbb{O}(4)=\lbrace (1) \rbrace$. Therefore, $\mathbb{A}(4)=\lbrace \emptyset, (1) \rbrace$ and the recurrence relation is given by
\begin{equation}
D_{4}(n,k)=D_{4}(n-2,k-1)+D_{4}(n-2k,k)+D_{4}(n-2(k-1)-1,k-1). \label{4duprec}
\end{equation}
For $s=8$, we have the set $\mathbb{O}(8)=\lbrace (1), (2), (3), (2,1), (3,1), (3,2), (3,2,1)\rbrace$. Therefore, $\mathbb{A}(8)=\lbrace \emptyset, (1), (2), (3), (2,1), (3,1), (3,2), (3,2,1) \rbrace$ and the recurrence relation is given by
\begin{multline*}
D_{8}(n,k)=D_{8}(n-4k,k-1)+D_{8}(n-4k,k)+D_{8}(n-4(k-1)-1,k-1)\\
+D_{8}(n-4(k-1)-2,k-1)\\
+D_{8}(n-4(k-1)-3,k-1)\\
+D_{8}(n-4(k-2)-3,k-2)\\
+D_{8}(n-4(k-2)-4,k-2)\\
+D_{8}(n-4(k-2)-5,k-2)\\
+D_{8}(n-4(k-3)-6,k-3).\\
\end{multline*}
\end{example}

\begin{example} For $(s,n,k)=(4,13,3)$, we have
\begin{align*} 
D_{4}(13,3)&=D_{4}(11,2)+D_{4}(7,3)+D_{4}(8,2)\\
&=5+2+4\\
&=11.
\end{align*}
\begin{table}[h!]
\begin{tabular}{|c|c|c|c|c|}
\hline
$\mathbb{D}_{4}(13,3)$     & $\mathbb{D}_{4}(11,2)$   & $\mathbb{D}_{4}(7,3)$ & $\mathbb{D}_{4}(8,2)$  \\ \hline
$(10,2,1)$   & $(10,1)$ &               &                           \\ \hline
$(9,2^{2})$   &   $(9,2)$              &  &                             \\ \hline
$(8,3,2)$   &      $(8,3)$           &  &                           \\ \hline
$(7,4,2)$ &     $(7,4)$            &               &                \\ \hline
$(6,5,2)$ &      $(6,5)$           &               &                 \\ \hline
$(6,4,3)$   &                 &       $(4,2,1)$        &                   \\ \hline
$(5,4^{2})$  &                 &    $(3,2^{2})$           &                    \\ \hline
$(9,3,1)$  &                 &            &                   $(7,1)$ \\ \hline
$(7,5,1)$   &                 &               &                  $(5,3)$ \\ \hline
$(6^{2},1)$   &                 &               &                  $(4^{2})$ \\ \hline
$(8,4,1)$   &                 &               &                   $(6,2)$ \\ \hline
\end{tabular}
\caption{An illustration of the construction of the set $\mathbb{D}_{4}(13,3)$.}
\end{table}
\end{example}

Merca \cite{r18} proved that the partition function $\myped(n)$, which enumerates the number of partitions of $n$ wherein even parts are distinct and odd parts are unrestricted, satisfies 
\[
\myped(n)=\sum_{k\geq 0}C_{4}(n-2T_{k}),
\]
where $T_{k}=k(k+1)/2$ is the $k$th triangular number. In the spirit of Euler's recurrence for the unrestricted partition function $p(n)$, we use a simple observation to prove the following recurrence for $C_{4}(n)$.
\begin{theorem} For every positive integer $n\geq 1$,
\[
C_{4}(n)=\sum\limits_{k\geq 1}(-1)^{k(k+1)/2+1}C_{4}(n-k(k+1)/2).
\]
\end{theorem}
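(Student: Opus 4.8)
The plan is to prove the equivalent statement that the series $\sum_{n\geq 0} C_4(n) q^n$ has $\psi(-q)$ as its multiplicative inverse, and then read off the recurrence by extracting coefficients. The ``simple observation'' alluded to in the text is that the theta series built from the triangular numbers with the prescribed signs is exactly $\psi(-q)$: since $(-q)^{k(k+1)/2} = (-1)^{k(k+1)/2} q^{k(k+1)/2}$, the series defining $\psi(-q)$ in the introduction rewrites as
\[
\psi(-q) = \sum_{k\geq 0} (-q)^{k(k+1)/2} = \sum_{k\geq 0} (-1)^{k(k+1)/2} q^{k(k+1)/2}.
\]

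First I would put the claimed identity in homogeneous form. Moving $C_4(n)$ to the right-hand side and absorbing it as the $k=0$ term (for which $k(k+1)/2 = 0$ and the sign is $+1$) reduces the statement to the assertion that
\[
\sum_{k\geq 0} (-1)^{k(k+1)/2} C_4(n - k(k+1)/2) = 0 \qquad (n\geq 1),
\]
under the convention $C_4(m)=0$ for $m<0$. This is precisely the claim that, for every $n\geq 1$, the coefficient of $q^n$ in the product $\psi(-q)\cdot \sum_{m\geq 0} C_4(m) q^m$ vanishes; equivalently, that this product equals its constant term $1$.

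The core computation is therefore to verify that $\psi(-q) = (q;q^2)_\infty (q^4;q^4)_\infty$, since Theorem~\ref{thmcongruent} with $s=4$ gives $\sum_{m\geq 0} C_4(m) q^m = 1/\big((q;q^2)_\infty (q^4;q^4)_\infty\big)$. Starting from the product representation $\psi(-q) = (q;q)_\infty (q^4;q^4)_\infty / (q^2;q^2)_\infty$ recorded in the introduction, I would apply the elementary factorization $(q;q)_\infty = (q;q^2)_\infty (q^2;q^2)_\infty$ (separating the odd-indexed factors from the even-indexed ones) to cancel $(q^2;q^2)_\infty$ and obtain $\psi(-q) = (q;q^2)_\infty (q^4;q^4)_\infty$. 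Hence $\psi(-q) \cdot \sum_{m\geq 0} C_4(m) q^m = 1$, and extracting the coefficient of $q^n$ for each $n\geq 1$ yields the displayed homogeneous identity.

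There is no serious obstacle here; the only step requiring care is the sign bookkeeping, namely confirming that the factor $-(-1)^{k(k+1)/2}$ produced by transposing the $k\geq 1$ terms agrees with the printed exponent $(-1)^{k(k+1)/2+1}$, and that the $k=0$ term correctly recovers the isolated $C_4(n)$ on the left-hand side.
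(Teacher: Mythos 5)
Your proposal is correct and follows essentially the same route as the paper: both identify $\psi(-q)=\sum_{k\geq 0}(-q)^{k(k+1)/2}$ as the multiplicative inverse of $\sum_{n\geq 0}C_{4}(n)q^{n}$ and extract coefficients. Your explicit verification that $\psi(-q)=(q;q^{2})_{\infty}(q^{4};q^{4})_{\infty}$ via the factorization $(q;q)_{\infty}=(q;q^{2})_{\infty}(q^{2};q^{2})_{\infty}$ simply spells out a step the paper leaves implicit.
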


\begin{proof}
We have that 
\begin{equation}
\psi(q)= \dfrac{(q^{2};q^{2})^{2}_{\infty}}{(q;q)_{\infty}}
= \sum\limits_{k\geq 0}q^{k(k+1)/2}. \label{psi}
\end{equation}
Replacing $q$ by $-q$ in $\eqref{psi}$ and using $(-q;-q)_{\infty}=\frac{(q^{2};q^{2})^{3}_{\infty}}{(q;q)_{\infty}(q^{4};q^{4})_{\infty}}$, we obtain
\[
\psi(-q)= \dfrac{(q;q)_{\infty}(q^{4};q^{4})_{\infty}}{(q^{2};q^{2})_{\infty}}\\
= \sum\limits_{k\geq 0}(-q)^{k(k+1)/2}.
\]
Then $(\sum\limits_{k\geq 0}(-q)^{k(k+1)/2})(\sum\limits_{n\geq 0}C_{4}(n)q^{n})=1$, from which our result follows.
\end{proof}

\begin{table}[h!]
\begin{tabular}{|c|c|c|c|c|c|c|c|c|c|}
\hline
$C_{4}(0)$ & $1$ & $C_{4}(5)$ & $4$  & $C_{4}(10)$ & $16$ & $C_{4}(15)$ & $55$  & $C_{4}(20)$ & $161$ \\ \hline
$C_{4}(1)$ & $1$ & $C_{4}(6)$ & $5$  & $C_{4}(11)$ & $21$ & $C_{4}(16)$ & $70$  & $C_{4}(21)$ & $196$ \\ \hline
$C_{4}(2)$ & $1$ & $C_{4}(7)$ & $7$  & $C_{4}(12)$ & $28$ & $C_{4}(17)$ & $86$  & $C_{4}(22)$ & $236$ \\ \hline
$C_{4}(3)$ & $2$ & $C_{4}(8)$ & $10$ & $C_{4}(13)$ & $35$ & $C_{4}(18)$ & $105$ & $C_{4}(23)$ & $287$ \\ \hline
$C_{4}(4)$ & $3$ & $C_{4}(9)$ & $13$ & $C_{4}(14)$ & $43$ & $C_{4}(19)$ & $130$ & $C_{4}(24)$ & $350$ \\ \hline
\end{tabular}
\caption{The first $24$ values of the function $C_{4}(n)$.}
\end{table}

\begin{example} For $n=21$, we obtain
\begin{eqnarray*}
C_{4}(21) &=& C_{4}(20)+C_{4}(18)-C_{4}(15)-C_{4}(11)+C_{4}(6)+C_{4}(0)\\
&=& 161+105-55-21+5+1 \\
&=& 196,
\end{eqnarray*}
and for $n=24$, we get
\begin{eqnarray*}
C_{4}(24) &=& C_{4}(23)+C_{4}(21)-C_{4}(18)-C_{4}(14)+C_{4}(9)+C_{4}(3)\\
&=& 287+196-105-43+13+2 \\
&=& 350.
\end{eqnarray*}
\end{example}

Following the fact that $\mypod(n)$ is a special case of the $s$-duplicate partitions, we give a generalization for the series expansion \eqref{eqAlladi} using the $s/2$-modular Ferrers diagrams. That is, we wish to obtain an expansion for the product generating function
\[
\sum\limits_{n,r,l\geq 0} D_{s}(n, r, l)z^{r}b^{l}q^{n}=  \prod\limits_{n\geq 1}\dfrac{(1+zbq^{n})}{(1+zbq^{sn/2})(1-zq^{sn/2})},
\]
where the powers of $b$ and $z$ encode the number of distinct parts and the total number of parts respectively.

\begin{theorem} \label{podgen} We have
\begin{multline*}
\sum_{n, r, l\geq 0}D_{s}(n, r, l)z^{r}b^{l}q^{n}=1+\sum_{k\geq 1} A_{k}(q) \times \\
\biggl\{ z^{k}q^{sk^{2}/2}(-zbq^{s(k-1)/2+1};q)_{s/2-1} (-bq^{s(k-1)/2+1};q)_{s/2-1}\\
+(1-zq^{sk/2})(1-q^{sk/2}) \sum_{i= 1}^{s/2-1} bz^{k}q^{sk^{2}/2-i}\biggr\},
\end{multline*}
where 
\[
A_{k}(q)= \dfrac{(-zbq,-zbq^{2}, \dots,-zbq^{s/2-1}; q^{s/2})_{k-1}(-bq,-bq^{2}, \dots,-bq^{s/2-1}; q^{s/2})_{k-1}}{(zq^{s/2};q^{s/2})_{k}(q^{s/2};q^{s/2})_{k}}.
\]
\end{theorem}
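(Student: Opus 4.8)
The plan is to prove the identity by dissecting the $s/2$-modular Ferrers diagram of an $s$-duplicate partition about its Durfee square, in direct generalization of Alladi's argument for \eqref{eqAlladi} (the case $s=4$). Recall that in the $s/2$-modular diagram each part $\lambda_i$ is a left-justified row of cells all equal to $s/2$ except possibly the rightmost one, which holds a remainder $r_i$ with $1\le r_i\le s/2$; a part is a multiple of $s/2$ exactly when $r_i=s/2$, and in an $s$-duplicate partition the non-multiples of $s/2$ (the rows ending in a partial cell) are precisely the parts forced to be distinct. I read $z$ as marking the total number of parts (one per row) and $b$ as marking the distinct parts. First I would fix the side $k\ge 0$ of the Durfee square, i.e.\ the largest $k$ for which the top-left $k\times k$ block consists entirely of full cells, and sum the resulting contributions over $k$; the empty square $k=0$ accounts for the leading $1$.

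For a fixed $k$, the square contributes $z^kq^{sk^2/2}$, since it represents $k$ rows (hence $k$ parts) of total weight $(s/2)k^2$. To the right sits the arm, a sub-diagram in at most $k$ rows which merely extends existing parts (so it carries no new factor of $z$), while below sits the leg, whose rows are genuinely new parts (each $z$-marked) of at most $k$ cells. I would then read off each region by separating full-cell from partial-end-cell contributions: the full cells of the leg give $1/(zq^{s/2};q^{s/2})_k$ and those of the arm give $1/(q^{s/2};q^{s/2})_k$, while the partial end cells, which turn a row into a distinct non-multiple and run through the residues $1,\dots,s/2-1$, produce the numerator products $\prod_{i=1}^{s/2-1}(-zbq^{i};q^{s/2})_{k-1}$ (leg) and $\prod_{i=1}^{s/2-1}(-bq^{i};q^{s/2})_{k-1}$ (arm). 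Assembling these four pieces yields exactly the prefactor $A_k(q)$, the degree-one factors encoding the forced distinctness of the non-multiples.

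The delicate part is the boundary between the square and its arm and leg — the $k$-th row and column of the square and the first row of the leg — where a partial cell can sit immediately at the square's edge (contributing weight in the range $\big((s/2)(k-1),(s/2)k\big)$) and where Durfee-square maximality must be respected. Here one must enumerate carefully the two cases, according to whether the relevant boundary cells remain full (keeping the part a multiple of $s/2$, so $z$- but not $b$-marked) or become partial (creating a distinct non-multiple, hence $b$-marked, and possibly not a new part, hence $z$-free). This is where the $c=k$ leg decorations $(-zbq^{s(k-1)/2+1};q)_{s/2-1}$ and the boundary-row decorations $(-bq^{s(k-1)/2+1};q)_{s/2-1}$ enter, and where the second term of the brace, carrying the factor $(1-zq^{sk/2})(1-q^{sk/2})$ that reduces the two length-$k$ Pochhammers in $A_k(q)$ back to length $k-1$, appears as the correction for the degenerate boundary configuration.

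The main obstacle is therefore twofold: setting up the boundary case analysis so that the distinctness constraint and the maximality condition are simultaneously respected without double counting, and then certifying in closed form, for every even $s\ge 4$, that the enumerated boundary contributions equal the expression inside the braces. As a consistency check I would specialise to $s=4$, where $s/2-1=1$ and the brace collapses to $z^kq^{2k^2}(1+zbq^{2k-1})(1+bq^{2k-1})+(1-zq^{2k})(1-q^{2k})\,bz^kq^{2k^2-1}$; under the substitutions $c=z$ and (Alladi's) $b=zb$ this must agree with the summand of \eqref{eqAlladi}, which reduces to the elementary identity $(b+q)(1+zbq^{4k-1})=q(1+zbq^{2k-1})(1+bq^{2k-1})+b(1-zq^{2k})(1-q^{2k})$, giving confidence that the general boundary computation has been packaged correctly.
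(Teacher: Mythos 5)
Your plan is the paper's proof in outline: the same dissection of the $s/2$-modular Ferrers diagram about its Durfee square, the same reading of the arm (no new $z$) and leg ($z$-marked new parts) as the two Pochhammer quotients, and the same $s=4$ consistency check reducing the brace to Alladi's summand via $(b+q)(1+zbq^{4k-1})=q(1+zbq^{2k-1})(1+bq^{2k-1})+b(1-zq^{2k})(1-q^{2k})$. But the step you defer as ``the main obstacle'' is precisely where the content of the proof lies, and your proposal stops before doing it; as written it is a correct plan rather than a proof.

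The paper's resolution of that step is simpler than your description anticipates, and you should carry it out rather than gesture at it. The entire case analysis is governed by one cell: the bottom-right box of the $k\times k$ Durfee square. If it holds the full entry $s/2$, the square weighs $sk^{2}/2$, the arm has at most $k$ parts and the leg has parts at most $sk/2$, so both numerator products have length $k$; peeling off their last factors gives exactly the decorations $(-zbq^{s(k-1)/2+1};q)_{s/2-1}\,(-bq^{s(k-1)/2+1};q)_{s/2-1}$ sitting in front of $A_{k}(q)$. If instead it holds $i\in\{1,\dots,s/2-1\}$, the $k$-th part is a non-multiple of $s/2$, hence distinct and $b$-marked, the square weighs $s(k^{2}-1)/2+i=sk^{2}/2-(s/2-i)$, and the arm and leg are cut down to at most $k-1$ parts and parts at most $s(k-1)/2$; both numerators and denominators then have length $k-1$, which against the length-$k$ denominators of $A_{k}(q)$ is exactly the factor $(1-zq^{sk/2})(1-q^{sk/2})$, and summing over $i$ and reindexing yields $\sum_{i=1}^{s/2-1}bz^{k}q^{sk^{2}/2-i}$. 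Because the two branches are distinguished by the value of a single box, the double-counting worry you raise does not arise; what remains is only the routine verification that the peeled-off boundary factors are $(-zbq^{s(k-1)/2+1};q)_{s/2-1}$ and $(-bq^{s(k-1)/2+1};q)_{s/2-1}$, which follows from $\prod_{t=1}^{s/2-1}(1+zbq^{t}q^{s(k-1)/2})=(-zbq^{s(k-1)/2+1};q)_{s/2-1}$.
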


\begin{proof} Consider the $k\times k$ Durfee square $D$ of the $s/2$-modular Ferrers diagram of an $s$-duplicate partition $\lambda$ of $n$, which is the largest square of boxes starting from the top-left corner of the diagram. This dissection of $\lambda$ results a partition $\lambda^{r}$ to the right of the square $D$ and a partition $\lambda^{b}$ below the square $D$. With regard to Durfee squares of $s/2$-modular diagrams, there are $s/2$ cases to consider. The first case is when the bottom-right box of the square $D$ has an entry $s/2$, while in the other $s/2-1$ cases, the bottom-right box of $D$ has an entry $i$ with $1\leq i \leq s/2-1$. In the first case, $\lambda^{r}$ is a partition into at most $k$ parts and $\lambda^{b}$ is a partition into parts $\leq sk/2$, and in the rest of the cases, $\lambda^{r}$ is a partition into at most $k-1$ parts and $\lambda^{b}$ is a partition into parts $\leq s(k-1)/2$. Note that both $\lambda^{r}$ and $\lambda^{b}$ are also $s$-duplicate partitions.

\begin{figure} [h!]
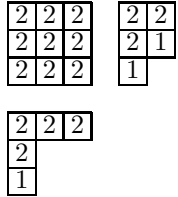

\begin{center}
\ytableausetup
{mathmode, boxframe=normal, boxsize=1em}
\begin{ytableau}
 2 & 2 & 2 & \none[] & 2 & 2\\
 2 & 2 & 2 & \none[] & 2 & 1\\
 2 & 2 & 2 & \none[] & 1\\
 \none[] & \none[] & \none[] & \none[] & \none[] \\
 2 & 2 & 2 \\
 2 \\
 1 \\
\end{ytableau}
\caption{An illustration of the first case for $s=4$, $k=3$, and $\lambda=(10,9,7,6,2,1)$.}
\end{center}
\end{figure}

\begin{figure} [h!]
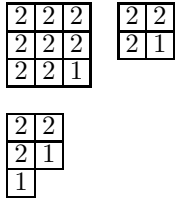

\begin{center}
\ytableausetup
{mathmode, boxframe=normal, boxsize=1em}
\begin{ytableau}
 2 & 2 & 2 & \none[] & 2 & 2\\
 2 & 2 & 2 & \none[] & 2 & 1\\
 2 & 2 & 1  \\
 \none[] & \none[] & \none[] & \none[] & \none[] \\
 2 & 2  \\
 2 & 1 \\
 1 \\
\end{ytableau}
\caption{An illustration of the other $s/2-1$ cases for $s=4$, $k=3$, and $\lambda=(10,9,5,4,3,1)$.}
\end{center}
\end{figure}

\noindent\textbf{The generating function for the number of partitions in the first case:} Let $\lambda \in \mathbb{D}(n,k)$ whose $s/2$-modular Ferrers diagram has a $k\times k$ Durfee square $D$ with all boxes filled with $s/2$. The sum of the entries in the boxes of $D$ is $sk^{2}/2$ and so we have the term
\[
z^{k}q^{sk^{2}/2}.
\]
On the other hand, the generating function for the partitions $\lambda^{b}$ is given by
\begin{equation}
\dfrac{(-zbq, -zbq^{2}, \dots,-zbq^{s/2-1}; q^{s/2})_{k}}{(zq^{s/2};q^{s/2})_{k}}. \label{eqb}
\end{equation}
In computing the generating function of $\lambda^{r}$, we do not need to keep track of the total number of parts. Thus the parameter $z$ will be absent in this generating function. Instead, we are only interested in the number of distinct parts in $\lambda^{r}$. Then, by using the conjugation of the modular diagram, we obtain the following generating function
\begin{equation}
\dfrac{(-bq, -bq^{2}, \dots,-bq^{s/2-1}; q^{s/2})_{k}}{(q^{s/2};q^{s/2})_{k}}. \label{eqr}
\end{equation}
Thus, the generating function for the partitions in the first case is
\[
z^{k}q^{sk^{2}/2}\dfrac{(-zbq, -zbq^{2}, \dots,-zbq^{s/2-1}; q^{s/2})_{k}(-bq, -bq^{2}, \dots,-bq^{s/2-1}; q^{s/2})_{k}}{(zq^{s/2};q^{s/2})_{k}(q^{s/2};q^{s/2})_{k}}.
\]

\noindent\textbf{The generating function for the number of partitions in the other $s/2-1$ cases:} The rest of the $s/2-1$ cases of partitions whose $s/2$-modular $k\times k$ Durfee square $D$ has a sum of entries equals to $s(k^{2}-1)/2+i$ where $i\in \lbrace 1,2, \dots ,s/2-1 \rbrace$, such that $i$ is the entry in the bottom right box of $D$. Again, the partitions $\lambda^{b}$ and $\lambda^{r}$ are the same as in the previous first case, except that the largest part of $\lambda^{b}$ is $\leq s(k-1)/2$ and the total number of parts of $\lambda^{r}$ is $\leq k-1$. Thus, the generating function for such cases would be
\[
\sum\limits_{i=1}^{s/2-1} bz^{k}q^{sk^{2}/2-i}\dfrac{(-zbq, \dots,-zbq^{s/2-1}; q^{s/2})_{k-1}(-bq, \dots,-bq^{s/2-1}; q^{s/2})_{k-1}}{(zq^{s/2};q^{s/2})_{k-1}(q^{s/2};q^{s/2})_{k-1}}.
\]
For the products occurring in \eqref{eqb} and \eqref{eqr}, we have
\begin{eqnarray*}
(-zbq, \dots,-zbq^{\frac{s}{2}-1}; q^{\frac{s}{2}})_{k} &=& (-zbq, \dots,-zbq^{\frac{s}{2}-1}; q^{\frac{s}{2}})_{k-1} \prod_{t=1}^{s/2-1}(1+zbq^{t}q^{s(k-1)/2}) \\  &=& (-zbq, \dots,-zbq^{\frac{s}{2}-1}; q^{\frac{s}{2}})_{k-1} \prod_{t=0}^{s/2-2}(1+zbq^{s(k-1)/2+1} q^{t}) \\
&=& (-zbq, \dots,-zbq^{\frac{s}{2}-1}; q^{\frac{s}{2}})_{k-1} (-zbq^{\frac{s(k-1)}{2}+1};q)_{\frac{s}{2}-1},
\end{eqnarray*}
and
\begin{eqnarray*}
(-bq, \dots,-bq^{\frac{s}{2}-1}; q^{\frac{s}{2}})_{k} &=& (-bq, \dots,-bq^{\frac{s}{2}-1}; q^{\frac{s}{2}})_{k-1} \prod_{t=1}^{s/2-1}(1+bq^{t}q^{s(k-1)/2})\\
&=& (-bq, \dots,-bq^{\frac{s}{2}-1}; q^{\frac{s}{2}})_{k-1} \prod_{t=0}^{s/2-2}(1+bq^{s(k-1)/2+1} q^{t})\\
&=& (-bq, \dots,-bq^{\frac{s}{2}-1}; q^{\frac{s}{2}})_{k-1} (-bq^{\frac{s(k-1)}{2}+1};q)_{\frac{s}{2}-1}.
\end{eqnarray*}
The sum of the generating functions of all the cases is
\begin{multline*}
A_{k}(q) \biggl\{ z^{k}q^{sk^{2}/2} (-zbq^{s(k-1)/2+1};q)_{s/2-1} (-bq^{s(k-1)/2+1};q)_{s/2-1} \\
+(1-zq^{sk/2})(1-q^{sk/2}) \sum_{i=1}^{s/2-1} bz^{k} q^{sk^{2}/2-i}\biggr\},
\end{multline*}
where
\[
A_{k}(q)= \dfrac{(-zbq,-zbq^{2}, \dots,-zbq^{s/2-1}; q^{s/2})_{k-1}(-bq,-bq^{2}, \dots,-bq^{s/2-1}; q^{s/2})_{k-1}}{(zq^{s/2};q^{s/2})_{k}(q^{s/2};q^{s/2})_{k}}.
\]
Now, the desired series expansion is obtained by summing the above expansion over $k$ and adding one, namely,
\begin{multline*}
\sum_{n, r, l\geq 0}D_{s}(n, r, l)z^{r}b^{l}q^{n}=1+\sum_{k\geq 1} A_{k}(q) \times \\
 \biggl\{ z^{k}q^{sk^{2}/2}(-zbq^{s(k-1)/2+1};q)_{s/2-1} (-bq^{s(k-1)/2+1};q)_{s/2-1} \\
+(1-zq^{sk/2})(1-q^{sk/2}) \sum_{i =1}^{s/2-1} bz^{k}q^{sk^{2}/2-i}\biggr\}.
\end{multline*}
\end{proof}

\begin{example} Alladi's series expansion $\eqref{eqAlladi}$ can be obtained from \textbf{Theorem \ref{podgen}} by setting $s=4$ and using the substitutions $z\rightarrow c$ and $b\rightarrow bc^{-1}$. That is,
\begin{multline*}
\prod\limits_{n\geq 1}\dfrac{(1+zbq^{2n-1})}{(1-zq^{2n})}=1+\sum_{k\geq 1} z^{k}q^{2k^{2}-1} \dfrac{(-zbq;q^{2})_{k-1}(-bq^{2};q)_{k-1}}{(-zq^{2};q^{2})_{k}(q^{2};q^{2})_{k}} \\
\times \biggl\{ q(1+zbq^{2k-1})(1+bq^{2k-1})+b(1-zq^{2k})(1-q^{2k}) \biggr\}.
\end{multline*}
Then, by simplifying the term in the brackets, we get
\[
\prod\limits_{n\geq 1}\dfrac{(1+zbq^{2n-1})}{(1-zq^{2n})}=1+\sum_{k\geq 1} z^{k}q^{2k^{2}-1} \dfrac{(-zbq;q^{2})_{k-1}(-bq^{2};q)_{k-1}}{(-zq^{2};q^{2})_{k}(q^{2};q^{2})_{k}} (b+q)(1+zbq^{4k-1}).
\]
By using the substitutions $z\rightarrow c$ and $b\rightarrow bc^{-1}$, we obtain
\[
\prod\limits_{n\geq 1}\dfrac{(1+bq^{2n-1})}{(1-cq^{2n})}=1+\sum_{k\geq 1} c^{k}q^{2k^{2}-1} \dfrac{(-bq;q^{2})_{k-1}(-bc^{-1}q^{2};q)_{k-1}}{(-cq^{2};q^{2})_{k}(q^{2};q^{2})_{k}} (bc^{-1}+q)(1+bq^{4k-1}).
\]
\end{example}
To delve deeper into the structure of $s$-duplicate partitions, we focus on $D_{s}(n, k)$, the number of $s$-duplicate partitions of $n$ into $k$ parts. Let $D_{4}(n,2)$ denote the number of $4$-duplicate partitions into $2$ parts of $n$. In the Online Encyclopedia of Integer Sequences \cite{r21}, we find that the function $D_{4}(n,2)$ matches the sequence \seqnum{A004524}, where it appears to have several interesting combinatorial interpretations. In the next theorem we use the sum operation on partitions, where figure \ref{youngdiagramofthesum} shows the defintion of the operation using Ferrers diagram of the partitions $\lambda=(5,4,3)$ and $\beta=(4,2)$.

\begin{figure} [h!]
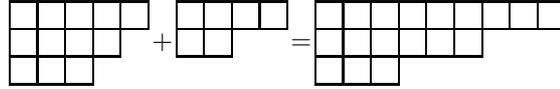

\begin{center}
\ytableausetup
{mathmode, boxframe=normal, boxsize=1em}
\begin{ytableau}

 & & & & & \none[] & & & & & \none[] & & & & & & & & &  \\
 & & & & \none[] & \none[+] & & & \none[] & \none[]  & \none[=] & & & & & &   \\
  & & &  \none[] &  \none[] & \none[] & \none[] & \none[] & \none[] & \none[]  & \none[] & & &   \\
\end{ytableau}
\end{center}
\caption{Ferrers diagram of $\lambda+\beta=(9,6,3)$. \label{youngdiagramofthesum}}
\end{figure}

\begin{theorem} The generating function for the number of $4$-duplicate partitions into $2$ parts is given by
\[
\sum_{n\geq 0} D_{4}(n,2)q^{n}= \dfrac{2q^{4}}{(q^{2};q^{2})_{2}} + \dfrac{q^{3}}{(1-q^{2})^{2}}.
\]
\end{theorem}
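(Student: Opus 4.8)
The plan is to identify $D_4(n,2)$ combinatorially, determine exactly which two-part partitions are forbidden, and then read off the generating function by splitting the count according to the parity of $n$; this parity split is what produces the two summands on the right-hand side. First I would unwind the definitions: for $s=4$ we have $s/2=2$, so a $4$-duplicate partition is precisely a POD partition, in which odd parts are distinct and even parts are unrestricted. A partition of $n$ into two parts $\lambda_1\ge\lambda_2\ge 1$ can fail this condition only through a repeated part, i.e. through $\lambda_1=\lambda_2$, and such a repeat is forbidden exactly when the common value is odd. Hence the only excluded configurations are the diagonals $(m,m)$ with $m$ odd, which occur precisely when $n=2m\equiv 2\pmod 4$.

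Next I would treat odd $n$. Here the two parts have opposite parities, so they are automatically distinct and in particular never form a repeated odd part; thus every partition of $n$ into two parts is $4$-duplicate, and their number is $\lfloor n/2\rfloor=(n-1)/2$. Summing over odd $n\ge 3$ gives $\sum_{m\ge 0}(m+1)q^{2m+3}=q^{3}/(1-q^{2})^{2}$, which is exactly the second summand. For even $n$ the two parts share a parity, and the single excluded partition is the diagonal $(n/2,n/2)$ when $n/2$ is odd, i.e. when $n\equiv 2\pmod 4$. Consequently $D_4(n,2)=n/2$ for $n\equiv 0\pmod 4$ and $D_4(n,2)=n/2-1$ for $n\equiv 2\pmod 4$, and in both cases this equals $2\lfloor n/4\rfloor$. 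Writing $n=2j$ and invoking $\sum_{j\ge 1}\lfloor j/2\rfloor x^{j}=x^{2}/\big((1-x)(1-x^{2})\big)$ with $x=q^{2}$ yields the coefficient $2\lfloor j/2\rfloor$ at $q^{2j}$ and hence the first summand $2q^{4}/(q^{2};q^{2})_{2}$.

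As a cleaner, case-free alternative I would record the following computation for robustness: the full two-part generating function is $q^{2}/\big((1-q)(1-q^{2})\big)$, and the forbidden family $(m,m)$ with $m$ odd contributes $q^{2}/(1-q^{4})$, so that $\sum_{n\ge 0}D_4(n,2)q^{n}=q^{2}/\big((1-q)(1-q^{2})\big)-q^{2}/(1-q^{4})$. Clearing denominators reduces this expression to $q^{3}/\big((1-q)^{2}(1+q^{2})\big)$, and the same factorization (using $(q^{2};q^{2})_{2}=(1-q^{2})(1-q^{4})$ together with $1-q^{4}=(1-q^{2})(1+q^{2})$) reduces the claimed right-hand side $2q^{4}/(q^{2};q^{2})_{2}+q^{3}/(1-q^{2})^{2}$ to the identical fraction, completing the proof.

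The computation carries no genuine obstacle; the only points demanding care are correctly isolating the unique forbidden two-part partition for each $n$ (it exists precisely when $n\equiv 2\pmod 4$) and verifying the closed forms of the two elementary series, after which either the parity bookkeeping or the single short factorization check settles the identity.
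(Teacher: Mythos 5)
Your proof is correct and follows essentially the same route as the paper: both split the two-part partitions according to the parities of the parts, with the mixed-parity case contributing $q^{3}/(1-q^{2})^{2}$ and the two same-parity cases (two even parts; two distinct odd parts) together contributing $2q^{4}/(q^{2};q^{2})_{2}$. Your supplementary complementary-counting check, subtracting the forbidden diagonals $(m,m)$ with $m$ odd from all two-part partitions and verifying both sides reduce to $q^{3}/\bigl((1-q)^{2}(1+q^{2})\bigr)$, is a sound independent confirmation not present in the paper, but the main argument is the same decomposition.
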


\begin{proof} There are three types of partitions in $\mathbb{D}_{4}(n,2)$: partitions into two even parts, partitions into one even part and one odd part, and partitions into two distinct odd parts. The first and the third type satisfy the same generating function with different combinatorial interpretations, which is
\[
\dfrac{q^{4}}{(q^{2};q^{2})_{2}}=q^{4}+q^{6}+2q^{8}+2q^{10}+\cdots.
\]
The function $1/(q^{2};q^{2})_{2}$ generates partitions into at most two even parts. In terms of Ferrers diagram, the exponent on $q^{4}$ consists of the partitions $(2,2)$ and $(3,1)$ in the first and the third type respectively. Then we add the partitions generated by $1/(q^{2};q^{2})_{2}$ to $(2,2)$ and $(3,1)$ to obtain the desired types of partitions. Meanwhile, the second type's partitions are generated by 
\[
\dfrac{q^{2}}{(1-q^{2})} \cdot \dfrac{q}{(1-q^{2})}=q^{3}+2q^{5}+3q^{7}+4q^{9}+\cdots.
\]
By summing up these generating functions, our result follows.
\end{proof}
The following result follows from \textbf{Theorem} \ref{thmduplicate}.
\begin{corollary} We have that $D_{4}(0,2)=D_{4}(1,2)=D_{4}(2,2)=0$, $D_{4}(3,2)=1$, and $D_{4}(4,2)=D_{4}(5,2)=D_{4}(6,2)=2$. For $n\geq 7$, $D_{4}(n,2)$ satisfies the recurrence relation
\[
D_{4}(n,2)=D_{4}(n-4,2)+2.
\]
\end{corollary}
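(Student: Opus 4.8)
The plan is to specialize the recurrence $\eqref{4duprec}$ of Example \ref{example4duplicate} to $k=2$ and to observe that the two terms counting partitions into a single part are essentially constant. Setting $k=2$ in $\eqref{4duprec}$ gives
\[
D_{4}(n,2)=D_{4}(n-2,1)+D_{4}(n-4,2)+D_{4}(n-3,1).
\]
First I would pin down the values of $D_{4}(m,1)$. A partition of $m$ into exactly one part is necessarily $(m)$, and since its unique part has multiplicity $1$ it is vacuously $4$-duplicate; hence $D_{4}(m,1)=1$ for every $m\geq 1$, while $D_{4}(m,1)=0$ for $m\leq 0$ (recall the convention that the counting function vanishes on negative arguments and that $0$ admits no partition into a positive part).

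The key step is then immediate: for $n\geq 7$ (indeed for any $n\geq 5$) both $n-2\geq 1$ and $n-3\geq 1$, so $D_{4}(n-2,1)=D_{4}(n-3,1)=1$, and the displayed recurrence collapses to
\[
D_{4}(n,2)=1+D_{4}(n-4,2)+1=D_{4}(n-4,2)+2,
\]
which is the asserted relation.

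It remains to verify the initial data, which I would do by direct enumeration. For $n=0,1$ there is no partition into two positive parts, so $D_{4}(0,2)=D_{4}(1,2)=0$; the only partition of $2$ into two parts is $(1^{2})$, whose repeated part $1$ is odd and therefore not congruent to $0$ modulo $2$, so $D_{4}(2,2)=0$. For $n=3$ the single candidate $(2,1)$ is $4$-duplicate, giving $D_{4}(3,2)=1$; for $n=4,5,6$ one lists the two-part partitions and discards exactly those with an odd repeated part (only $(3^{2})$, occurring when $n=6$), yielding $D_{4}(4,2)=D_{4}(5,2)=D_{4}(6,2)=2$.

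I do not expect any genuine obstacle here: the whole argument hinges on the elementary fact that $D_{4}(m,1)$ is identically $1$ for positive $m$, which makes the three-term recurrence of $\eqref{4duprec}$ degenerate into the clean two-step recurrence. The only point demanding a little care is the bookkeeping at the boundary, namely ensuring that $n-2$ and $n-3$ are genuinely positive (so that the single-part terms contribute $1$ rather than $0$); this is precisely why the statement is phrased from $n\geq 7$, even though, once the listed base values are in place, the recurrence in fact already holds from $n\geq 4$.
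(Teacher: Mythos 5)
Your proposal is correct and follows essentially the same route as the paper: specialize the recurrence \eqref{4duprec} to $k=2$, observe that $D_{4}(m,1)=1$ for all $m\geq 1$ so the two single-part terms each contribute $1$, and check the small cases directly. You are in fact slightly more careful than the paper (which merely lists the first six instances and ``observes'' the pattern), and your remark that the recurrence already holds for $n\geq 4$ given the stated base values is accurate.
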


\begin{proof} By listing the first $6$ terms of recurrence $\eqref{4duprec}$ in \textbf{Example \ref{example4duplicate}} for $k=2$, we get
\begin{align*}
&D_{4}(3,2)=D_{4}(1,1)+D_{4}(-1,2)+D_{4}(0,1)=1+0+0=1,\\
&D_{4}(4,2)=D_{4}(2,1)+D_{4}(0,2)+D_{4}(1,1)=1+0+1=2,\\
&D_{4}(5,2)=D_{4}(3,1)+D_{4}(1,2)+D_{4}(2,1)=1+0+1=2,\\
&D_{4}(6,2)=D_{4}(4,1)+D_{4}(2,2)+D_{4}(3,1)=1+0+1=2,\\
&D_{4}(7,2)=D_{4}(5,1)+D_{4}(3,2)+D_{4}(4,1)=1+1+1=3,\\
&D_{4}(8,2)=D_{4}(6,1)+D_{4}(4,2)+D_{4}(5,1)=1+2+1=4.
\end{align*}
We observe that $D_{4}(n,1)=1$ for all $n\geq 1$. Thus, we conclude that $D_{4}(n,2)=D_{4}(n-4,2)+2$ for all $n\geq 7$.
\end{proof}

\section{Partitions into parts simultaneously $s$-congruent and $t$-distinct}
\label{partitionssimcongdist}
A \textit{$t$-distinct partition} $\lambda$ of a positive integer $n$ is a finite sequence of positive integers such that $\lambda_{1}^{u_{1}}+ \lambda_{2}^{u_{2}}+\dots+ \lambda_{k}^{u_{k}}=n$, where $1\leq u_{i}< t$ and $t\geq 2$. Note that in the literature, $t$-distinct partitions are also defined by a gap between parts condition. We shall impose an additional restriction on the set of the $s$-congruent partitions to obtain a new set of partitions into parts simultaneously $s$-congruent and $t$-distinct.
\begin{definition} A partition into parts simultaneously $s$-congruent and $t$-distinct is a partition into parts not congruent to $2,4,6, \dots, (s-2)$ modulo $s$ and appearing fewer than $t$ times. We denote by $C_{s}^{t}(n)$ the number of partitions into parts simultaneously $s$-congruent and $t$-distinct of $n$.
\end{definition}
The generating function for $C_{s}^{t}(n)$ is given by
\begin{equation}
\sum_{n\geq 0} C_{s}^{t}(n)q^{n}=\prod_{n\geq 1} \dfrac{(1-q^{t(2n-1)}) (1-q^{tsn})}{(1-q^{2n-1}) (1-q^{sn})}. \label{gfcongdist}
\end{equation}
For example, $C_{4}^{4}(9)=9$ and the corresponding set is
\[
\mathbb{C}_{4}^{4}(9)=\lbrace (9), (8, 1), (7, 1^{2}), (3^{3}), (3^{2}, 1^{3}), (5, 4), (5, 3, 1), (4, 3, 1^{2}), (4^{2}, 1) \rbrace.
\]
Among the most celebrated identities in the theory of partitions and $q$-series are those of G\"ollnitz and Gordon. These identities were initially discovered by G\"ollnitz \cite{GOL} in 1961, but remained unknown until Gordon \cite{GOR} independently rediscovered them in 1965.
\begin{theorem} (G\"ollnitz-Gordon identities). Fix $a$ to be either $1$ or $3$. Given an integer $n$, the number of partitions of $n$ in which parts are congruent to $4$ or $\pm a$ modulo $8$, is equal to the number of partitions of $n$ in which parts are non-repeating and non-consecutive, with any two even parts differing by at least $4$, and with all parts $\geq a$.
\end{theorem}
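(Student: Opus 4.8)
The plan is to deduce the stated equinumerosity from the classical analytic Göllnitz--Gordon identity
\[
\sum_{n\geq 0}\frac{q^{\,n^{2}+(a-1)n}(-q;q^{2})_{n}}{(q^{2};q^{2})_{n}}=\frac{1}{(q^{a};q^{8})_{\infty}(q^{4};q^{8})_{\infty}(q^{8-a};q^{8})_{\infty}},\qquad a\in\{1,3\},
\]
by interpreting each side as a generating function and then invoking the identity itself. First I would treat the right-hand side: expanding each factor of the product as a geometric series shows that $1/\big((q^{a};q^{8})_{\infty}(q^{4};q^{8})_{\infty}(q^{8-a};q^{8})_{\infty}\big)$ is precisely the generating function for partitions all of whose parts are congruent to $a$, $4$, or $8-a\equiv-a$ modulo $8$, which is the first family in the theorem.

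Next I would identify the left-hand side with the second family by dissecting the gap-restricted partitions according to their number of parts $n$. The unique smallest partition on $n$ parts meeting all four constraints (distinct parts, successive gaps $\geq 2$, adjacent even parts gapped by $\geq 4$, and every part $\geq a$) is the odd staircase $a,\,a+2,\,\dots,\,a+2(n-1)$, whose weight is exactly $n^{2}+(a-1)n$, accounting for the factor $q^{\,n^{2}+(a-1)n}$. A standard dissection then shows that every admissible partition on $n$ parts is obtained from this staircase by adjoining a partition into even parts bounded by $2n$, contributing $1/(q^{2};q^{2})_{n}$, together with an independent choice, recorded by $(-q;q^{2})_{n}=\prod_{j=1}^{n}(1+q^{2j-1})$, of distinct odd increments that toggle parities without violating the even-part spacing. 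Summing the resulting summands over $n$ realizes the left-hand series as the generating function for the second family.

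The crux is the analytic identity itself. My preferred route is the Bailey-chain method: one starts from the unit Bailey pair relative to $a$, applies the Bailey transform once, and takes the appropriate limit so that the left-hand sum appears, after which the closed product on the right is extracted from the resulting theta series by the Jacobi triple product already recorded in the excerpt. Equivalently, one may run Andrews' functional-equation argument on the refinement $H_{a}(z,q)=\sum_{n\geq 0}z^{n}q^{\,n^{2}+(a-1)n}(-q;q^{2})_{n}/(q^{2};q^{2})_{n}$: establish a $q$-difference equation linking $H_{a}(z,q)$ to $H_{a}(zq^{2},q)$ and $H_{a}(zq^{4},q)$, verify that the product side obeys the same recursion, and conclude by uniqueness of the power-series solution normalized at $z=0$.

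The main obstacle is pinning down this functional equation (or, in the Bailey formulation, selecting the correct Bailey pair) and checking that the product satisfies the same recursion; by contrast the combinatorial reading in the first two steps is routine, and once the analytic identity is in hand the theorem follows immediately by equating the two generating-function interpretations of its two sides.
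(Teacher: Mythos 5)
The paper does not prove this statement at all: it is quoted as a classical theorem with citations to G\"ollnitz and Gordon, and the displayed product--sum forms are simply recorded afterwards. So there is no in-paper argument to match yours against; your proposal has to stand on its own as a proof, and as written it does not yet do so. The architecture you describe is the standard one (combinatorially interpret both sides of the analytic identity
\[
\sum_{n\geq 0}\frac{q^{\,n^{2}+(a-1)n}(-q;q^{2})_{n}}{(q^{2};q^{2})_{n}}=\frac{1}{(q^{a};q^{8})_{\infty}(q^{4};q^{8})_{\infty}(q^{8-a};q^{8})_{\infty}},
\]
then prove that identity analytically), and your reading of the product side is correct and genuinely routine. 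But the two load-bearing steps are only gestured at. First, the analytic identity itself: you name two viable methods (a Bailey pair, or Andrews' $q$-difference equation for $H_{a}(z,q)$) without exhibiting the Bailey pair or the functional equation, and you yourself flag this as ``the main obstacle.'' Until that recursion is written down and verified on both sides, the crux of the theorem is assumed rather than proved.

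Second, your interpretation of the sum side is looser than it can afford to be. Identifying the minimal configuration on $n$ parts as the staircase $a, a+2,\dots, a+2(n-1)$ of weight $n^{2}+(a-1)n$ is right, but the claim that every admissible partition arises from it by adjoining a partition counted by $1/(q^{2};q^{2})_{n}$ together with ``distinct odd increments that toggle parities without violating the even-part spacing'' is exactly the point where the difference conditions (gaps $\geq 2$, with gap $\geq 4$ between consecutive even parts) must be checked to be preserved and the map shown to be a bijection. This is G\"ollnitz's original combinatorial argument and it is not automatic; one must specify the order in which the even columns and the odd toggles are added and verify injectivity and surjectivity. So the proposal is a correct road map but not a proof: to complete it you need (i) the explicit $q$-difference equation (or Bailey pair) and the verification that the infinite product satisfies it, and (ii) a precise bijection realizing the summand $q^{\,n^{2}+(a-1)n}(-q;q^{2})_{n}/(q^{2};q^{2})_{n}$ as the generating function for the gap-restricted partitions with exactly $n$ parts.
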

In terms of generating functions, these two identities can be written as
\[
\sum_{n\geq 0} \dfrac{q^{n^{2}}(-q;q^{2})_{n}}{(q^{2};q^{2})_{n}}=\dfrac{1}{(q;q^{8})_{\infty} (q^{4};q^{8})_{\infty} (q^{7};q^{8})_{\infty}},
\]
and
\[
\sum_{n\geq 0} \dfrac{q^{n(n+2)}(-q;q^{2})_{n}}{(q^{2};q^{2})_{n}}=\dfrac{1}{(q^{3};q^{8})_{\infty} (q^{4};q^{8})_{\infty} (q^{5};q^{8})_{\infty}}.
\]
Over the years, the G\"ollnitz-Gordon identities have acquired significant attention, akin to the Rogers-Ramanujan identities, the most celebrated identities in the field. Soon thereafter, it led to the major generalization of Andrews, a general theorem \cite[p. 114]{ANDB} that reduces to the G\"ollnitz-Gordon identities in the special cases in which $k=i=2$, $k=i+1=2$.
\begin{theorem} \label{AndrewsgofGG} (Andrews). Let $i$ and $k$ be integers with $0<i\leq k$. Let $V_{k,i}(n)$ denote the number of partitions of $n$ into parts not congruent to $2$ modulo $4$ and not congruent to $0,\pm (2i-1)$ modulo $4k$. Let $W_{k,i}(n)$ denote the number of partitions $(\lambda_{1}^{u_{1}}, \lambda_{2}^{u_{2}}, \dots, \lambda_{m}^{u_{m}})$ of $n$ in which no odd part is repeated, $\lambda_{j}\geq \lambda_{j+1}$, $\lambda_{j}- \lambda_{j+k-1}\geq 2$ if $\lambda_{j}$ odd, $\lambda_{j}- \lambda_{j+k-1}> 2$ if $\lambda_{j}$ even, and at most $i-1$ parts are $\leq 2$. Then
\[
V_{k,i}(n)=W_{k,i}(n).
\]
\end{theorem}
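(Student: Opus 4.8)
The plan begins by recognizing this as Andrews' classical generalization of the G\"ollnitz--Gordon identities, so that the quickest route is simply to invoke \cite[p.~114]{ANDB}. For a self-contained argument, and to make contact with the generating function \eqref{gfcongdist}, I would prove the equality of the two counting functions by computing a common generating function. The cleanest target is the product side: parts not congruent to $2$ modulo $4$ contribute $1/\big((q;q^{2})_{\infty}(q^{4};q^{4})_{\infty}\big)$, and deleting the residue classes $0,\pm(2i-1)$ modulo $4k$ multiplies this by the theta factor $(q^{2i-1};q^{4k})_{\infty}(q^{4k-2i+1};q^{4k})_{\infty}(q^{4k};q^{4k})_{\infty}$, which is exactly $f(-q^{2i-1},-q^{4k-2i+1})$ by the Jacobi triple product stated above. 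Thus $\sum_{n\ge 0}V_{k,i}(n)q^{n}=f(-q^{2i-1},-q^{4k-2i+1})/\big((q;q^{2})_{\infty}(q^{4};q^{4})_{\infty}\big)$, and the whole problem reduces to showing that the gap-partitions counted by $W_{k,i}(n)$ have the same generating function.

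For the $W_{k,i}$ side I would introduce the refined generating function $G_{k,i}(x;q)=\sum_{\lambda} x^{\ell(\lambda)}q^{\mid\lambda\mid}$, the sum running over partitions obeying the non-repetition of odd parts, the asymmetric gap conditions $\lambda_{j}-\lambda_{j+k-1}\ge 2$ (odd) and $\lambda_{j}-\lambda_{j+k-1}>2$ (even), and the restriction of at most $i-1$ parts $\le 2$. Classifying each such partition by its number of parts equal to $1$ and to $2$ and then subtracting $1$ from every part yields, in the usual Andrews--Gordon manner, a system of $q$-difference equations linking $G_{k,i}$ to $G_{k,i-1}$; here the non-repetition of odd parts together with the stricter gap for even parts is precisely what produces the $(-q;q^{2})$-type weights characteristic of the G\"ollnitz--Gordon case. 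Iterating this recurrence expresses $G_{k,i}(1;q)$ as an Andrews--Gordon--G\"ollnitz multisum.

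The decisive step is then to collapse that multisum to the product above. I would do this by exhibiting the Bailey pair underlying the G\"ollnitz--Gordon identities and running it through the Bailey chain (equivalently, applying the relevant limiting case of Watson's $q$-analogue of Whipple's theorem); this turns the multisum into $f(-q^{2i-1},-q^{4k-2i+1})/\big((q;q^{2})_{\infty}(q^{4};q^{4})_{\infty}\big)$, matching the $V_{k,i}$ computation and giving $V_{k,i}(n)=W_{k,i}(n)$ after comparing coefficients of $q^{n}$. As sanity checks I would recover the two classical G\"ollnitz--Gordon cases $k=i=2$ and $k=2,\,i=1$.

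The main obstacle is exactly this analytic collapse, i.e.\ identifying and iterating the correct Bailey pair while correctly bookkeeping the odd/even asymmetry and the distinct-odd-parts condition; a purely bijective proof of $V_{k,i}=W_{k,i}$ is not known and I would not attempt one. Finally, to realize the paper's stated goal I would compare the resulting product with \eqref{gfcongdist} to pin down, possibly only for a restricted range of $(s,t)\leftrightarrow(k,i)$, the parameters for which Andrews' count coincides with the number of $s$-congruent, $t$-distinct partitions, reconciling the multiplicity bound of $C_{s}^{t}(n)$ with the congruence conditions of $V_{k,i}(n)$ via a Glaisher-type dissection in the spirit of the bijections of Theorem \ref{thmequivalence}.
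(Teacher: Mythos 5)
The paper does not actually prove this theorem: it is quoted as Andrews' classical generalization of the G\"ollnitz--Gordon identities with a citation to \cite[p.~114]{ANDB}, which is precisely the route your opening sentence takes. Your supplementary sketch (Jacobi-triple-product evaluation of the $V_{k,i}$ product, $q$-difference equations for the gap-condition side, and the Bailey-chain collapse) is a fair outline of Andrews' standard argument, but it is left at the level of a plan and is in any case not needed, since the paper itself only invokes the result.
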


In further exploration of Andrews' general theorem \cite[p. 114]{ANDB}, we enlarge its scope by expanding the class of partitions enumerated by $V_{k,i}(n)$ for certain values of $k$ and $i$.

\begin{definition} \label{defpartofAndthm} Let $t\geq 3$ be a positive integer with $t \not\equiv 2, 4, 6, \dots, (s-2)$ $(\mathrm{mod} \ s)$. Let $E_{s}^{t}(n)$ denote the number of partitions of $n$ into parts not congruent to $ 2, 4, 6, \dots, (s-2)$ modulo $s$ and not congruent to $0, t(2r+1)$ modulo $ts$, where $r=0, 1, 2, 3, \dots, s/2-1$. 
\end{definition}

\begin{theorem} The generating function for $E_{s}^{t}(n)$ is given by
\[
\sum_{n\geq 0} E_{s}^{t}(n)q^{n} = \dfrac{(q^{2};q^{2})_{\infty} (q^{t};q^{t})_{\infty} (q^{ts};q^{ts})_{\infty}}{(q;q)_{\infty} (q^{s};q^{s})_{\infty} (q^{2t};q^{2t})_{\infty}}.
\]
\end{theorem}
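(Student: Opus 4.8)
The plan is to obtain the generating function directly as a product over the allowed parts and then match it against the claimed right-hand side. First I would invoke Theorem \ref{thmcongruent}: the $s$-congruent parts are precisely those that are odd or divisible by $s$, so that $1/\bigl((q;q^2)_\infty (q^s;q^s)_\infty\bigr)$ enumerates partitions into $s$-congruent parts. The partitions counted by $E_s^t(n)$ are exactly those whose parts are $s$-congruent with the two additional forbidden residue classes modulo $ts$ removed, so their generating function is obtained from the $s$-congruent one by deleting the forbidden parts; deleting a part $m$ from an unrestricted ``into'' product amounts to multiplying by the factor $(1-q^m)$. Thus I only need to identify, and then multiply in, the factors corresponding to the two families of excluded parts.

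The decisive step is to identify the set of excluded parts coming from the residues $t(2r+1) \pmod{ts}$. I would establish the combinatorial identity
\[
\{\, t(2r+1) + tsj \;:\; 0 \le r \le s/2-1,\; j \ge 0 \,\} = \{\, t o \;:\; o \ \text{odd} \,\},
\]
the set of odd multiples of $t$. This rests on the elementary observation that every odd positive integer $o$ has a unique decomposition $o = (2r+1) + sj$ with $0 \le r \le s/2-1$ and $j \ge 0$ (because $s$ is even, subtracting the largest multiple of $s$ below $o$ leaves an odd residue in $\{1,3,\dots,s-1\}$); multiplying through by $t$ gives the claim. The residues $\equiv 0 \pmod{ts}$ simply contribute the multiples of $ts$, giving the factor $(q^{ts};q^{ts})_\infty$, while the odd multiples of $t$ give $\prod_{o\ \text{odd}}(1-q^{to}) = (q^t;q^{2t})_\infty$.

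Before multiplying, I would verify two consistency points using the hypothesis $t \not\equiv 2,4,\dots,s-2 \pmod s$ (that is, $t$ is odd or $s \mid t$). First, every odd multiple $to$ of $t$ is itself $s$-congruent: if $t$ is odd then $to$ is odd, and if $s\mid t$ then $to$ is a multiple of $s$, so in either case it lies among the parts already allowed and its deletion is legitimate. Second, the two deleted families are disjoint: an odd multiple $to$ is never a multiple of $ts$, since that would force $s \mid o$, impossible for $o$ odd and $s$ even. Hence no part is removed twice, and the generating function for $E_s^t(n)$ equals
\[
\frac{(q^{ts};q^{ts})_\infty\,(q^t;q^{2t})_\infty}{(q;q^2)_\infty\,(q^s;q^s)_\infty}.
\]

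It remains to reconcile this with the stated form, which is routine: applying the standard identities $1/(q;q^2)_\infty = (q^2;q^2)_\infty/(q;q)_\infty$ and $(q^t;q^{2t})_\infty = (q^t;q^t)_\infty/(q^{2t};q^{2t})_\infty$ converts the product above into $\frac{(q^2;q^2)_\infty (q^t;q^t)_\infty (q^{ts};q^{ts})_\infty}{(q;q)_\infty (q^s;q^s)_\infty (q^{2t};q^{2t})_\infty}$, as desired. The main obstacle I anticipate is the identification of the excluded residue classes in the middle paragraph: one must check that the arithmetic progressions $t(2r+1) \pmod{ts}$ coalesce into the single clean family of odd multiples of $t$, and that the parity/divisibility hypothesis on $t$ is exactly what keeps every deleted part inside the $s$-congruent set without overlap. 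The remainder is bookkeeping with $q$-Pochhammer manipulations.
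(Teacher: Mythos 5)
Your proof is correct and follows essentially the same route as the paper's: both identify the union of the residue classes $t(2r+1) \pmod{ts}$ as precisely the odd multiples of $t$, yielding the factor $(q^{t};q^{2t})_{\infty}$, and multiply this together with $(q^{ts};q^{ts})_{\infty}$ into the $s$-congruent product $1/\bigl((q;q^{2})_{\infty}(q^{s};q^{s})_{\infty}\bigr)$ before simplifying. Your explicit checks that the hypothesis on $t$ keeps every deleted part inside the allowed set and that the two deleted families are disjoint are points the paper leaves implicit, but they do not change the argument.
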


\begin{proof} The proof is straightforward and involves elementary generating function manipulations. Consider the positive integer $t\geq 3$ with $t \not\equiv 2, 4, 6, \dots, (s-2)$ (mod $s$). The generating function 
\begin{equation}
\dfrac{(q^{2};q^{2})_{\infty}}{(q;q)_{\infty} (q^{s};q^{s})_{\infty}} \label{notcongtos}
\end{equation}
generates partitions into parts $\lambda_{i} \not\equiv 2, 4, 6, \dots, (s-2)$ (mod $s$). Now, for $r=0, 1, 2, 3, \dots, s/2-1$, consider the product of generating functions
\begin{equation}
(q^{t};q^{ts})_{\infty} (q^{3t};q^{ts})_{\infty} (q^{5t};q^{ts})_{\infty} \cdots (q^{t(s-1)};q^{ts})_{\infty}. \label{gfprod}
\end{equation}
For $n\geq 0$, the exponents on $q$ in the product $\eqref{gfprod}$ are of the form
\begin{gather*} 
t(sn+1)\\
t(sn+3)\\
t(sn+5)\\
\vdots \\
t(sn+s-1).\\ 
\end{gather*}
Since $s\geq 4$ is even and $t\geq 3$, we obtain
\[
(q^{t};q^{ts})_{\infty} (q^{3t};q^{ts})_{\infty} (q^{5t};q^{ts})_{\infty} \cdots (q^{t(s-1)};q^{ts})_{\infty}=\prod_{n\geq 1}(1-q^{t(2n-1)}),
\]
therefore, 
\begin{equation}
(q^{t};q^{ts})_{\infty} (q^{3t};q^{ts})_{\infty} (q^{5t};q^{ts})_{\infty} \cdots (q^{t(s-1)};q^{ts})_{\infty}(q^{ts};q^{ts})_{\infty}=\dfrac{(q^{t};q^{t})_{\infty}}{(q^{2t};q^{2t})_{\infty}}(q^{ts};q^{ts})_{\infty}. \label{congtost}
\end{equation}
By multiplying the generating functions $\eqref{notcongtos}$ and $\eqref{congtost}$ together, our result follows.
\end{proof}

\begin{example} For $(s,t,n)=(6,3,14)$, there are $13$ partitions into parts not congruent to $2,4$ modulo $6$ and $0,3,9,15$ modulo $18$, such that
\begin{multline*}
\mathbb{E}_{6}^{3}(14)=\lbrace (13,1), (12, 1^{2}), (11,3), (7^{2}), (7,6,1), (7,5,1^{2}), (7, 1^{7}),\\ 
(6^{2},1^{2}), (6, 1^{8}), (6, 5, 1^{3}), (5^{2}, 1^{4}), (5, 1^{9}), (1^{14}) \rbrace.
\end{multline*}
\end{example}

\begin{theorem} Let $t\geq 3$ be a positive integer with $t \not\equiv 2, 4, 6, \dots, (s-2)$ modulo $s$. Then for every positive integer $n\geq 0$,
\[
C_{s}^{t}(n)=E_{s}^{t}(n).
\]
\end{theorem}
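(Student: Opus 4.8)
The plan is to give a generating-function proof. Both $C_{s}^{t}(n)$ and $E_{s}^{t}(n)$ already possess known product generating functions---the former from \eqref{gfcongdist}, the latter from the preceding theorem---so it suffices to show that these two infinite products are formally equal as power series in $q$ and then to equate coefficients of $q^{n}$.

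First I would rewrite the generating function \eqref{gfcongdist} for $C_{s}^{t}(n)$ in $q$-Pochhammer notation. Using $\prod_{n\geq 1}(1-q^{t(2n-1)})=(q^{t};q^{2t})_{\infty}$, $\prod_{n\geq 1}(1-q^{tsn})=(q^{ts};q^{ts})_{\infty}$, $\prod_{n\geq 1}(1-q^{2n-1})=(q;q^{2})_{\infty}$, and $\prod_{n\geq 1}(1-q^{sn})=(q^{s};q^{s})_{\infty}$, this becomes
\[
\sum_{n\geq 0} C_{s}^{t}(n)q^{n}=\dfrac{(q^{t};q^{2t})_{\infty}(q^{ts};q^{ts})_{\infty}}{(q;q^{2})_{\infty}(q^{s};q^{s})_{\infty}}.
\]
The key step is the even--odd splitting identity $(q^{a};q^{a})_{\infty}=(q^{a};q^{2a})_{\infty}(q^{2a};q^{2a})_{\infty}$, obtained by separating the factors indexed by odd and even multiples of $a$. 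Applying it with $a=1$ gives $(q;q^{2})_{\infty}=(q;q)_{\infty}/(q^{2};q^{2})_{\infty}$, and with $a=t$ gives $(q^{t};q^{2t})_{\infty}=(q^{t};q^{t})_{\infty}/(q^{2t};q^{2t})_{\infty}$. Substituting both into the displayed expression and rearranging yields
\[
\sum_{n\geq 0} C_{s}^{t}(n)q^{n}=\dfrac{(q^{2};q^{2})_{\infty}(q^{t};q^{t})_{\infty}(q^{ts};q^{ts})_{\infty}}{(q;q)_{\infty}(q^{s};q^{s})_{\infty}(q^{2t};q^{2t})_{\infty}},
\]
which is exactly the generating function for $E_{s}^{t}(n)$ established in the preceding theorem. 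Comparing coefficients of $q^{n}$ on both sides then gives $C_{s}^{t}(n)=E_{s}^{t}(n)$ for every $n\geq 0$.

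I do not anticipate a genuine obstacle, as the argument is a short algebraic manipulation of Pochhammer symbols; the only point requiring care is that the product formula for $E_{s}^{t}(n)$ is valid only under the hypothesis $t\geq 3$ and $t\not\equiv 2,4,\dots,(s-2)\ (\mathrm{mod}\ s)$, so I would invoke the preceding theorem precisely in that range. One could also remark that, since both sides enumerate explicit families of partitions, the identity invites a bijective proof pairing each partition that is simultaneously $s$-congruent and $t$-distinct with a partition avoiding the residues prescribed in Definition \ref{defpartofAndthm}; however, the generating-function route above is by far the most economical.
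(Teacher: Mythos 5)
Your proposal is correct and follows essentially the same route as the paper: both rewrite the product in \eqref{gfcongdist} using the splitting $(q^{a};q^{2a})_{\infty}=(q^{a};q^{a})_{\infty}/(q^{2a};q^{2a})_{\infty}$ (at $a=1$ and $a=t$) and match the result to the generating function of $E_{s}^{t}(n)$. Your write-up merely makes the second application of the splitting identity explicit where the paper leaves it implicit.
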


\begin{proof}  By employing the fact $(q;q^{2})^{-1}_{\infty}=(q^{2};q^{2})_{\infty}/(q;q)_{\infty}$ in $\eqref{gfcongdist}$, we obtain
\begin{eqnarray*}
\sum_{n\geq 0} C_{s}^{t}(n)q^{n}&=& \dfrac{(q^{t};q^{2t})_{\infty}(q^{ts};q^{ts})_{\infty}}{(q;q^{2})_{\infty} (q^{s};q^{s})_{\infty}}\\
&=& \dfrac{(q^{2};q^{2})_{\infty} (q^{t};q^{t})_{\infty} (q^{ts};q^{ts})_{\infty}}{(q;q)_{\infty} (q^{2t};q^{2t})_{\infty} (q^{s};q^{s})_{\infty}}= \sum_{n\geq 0} E_{s}^{t}(n)q^{n}.
\end{eqnarray*}
\end{proof}

The relation between the number $E^{t}_{s}(n)$ and Andrews' theorem \cite[p. 114]{ANDB} becomes apparent by setting $s=4$ in \textbf{Definition $\ref{defpartofAndthm}$}. We have $E^{t}_{4}(n)=V_{t,(t+1)/2}(n)$ for every odd $t\geq 3$, where $t=k=2i-1$. In essence, this establishes a relationship with Andrews' theorem and highlights the significance of these class of partitions in the context of the study.

\begin{corollary} \label{corequi} Let $t\geq 3$ be an odd integer. Then for every natural number $n\geq0$,
\[
C_{4}^{t}(n) = V_{t,(t+1)/2}(n) = W_{t,(t+1)/2}(n).
\]
\end{corollary}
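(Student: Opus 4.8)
The plan is to chain together three equalities: the identity $C_4^t(n) = E_4^t(n)$ coming from the theorem that immediately precedes this corollary, a purely definitional identification $E_4^t(n) = V_{t,(t+1)/2}(n)$, and Andrews' equality $V_{k,i}(n) = W_{k,i}(n)$ from Theorem \ref{AndrewsgofGG}.

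First I would specialize the preceding theorem (the one establishing $C_s^t(n) = E_s^t(n)$) to $s = 4$. Since $t \geq 3$ is odd, $t$ is certainly not congruent to $2 \pmod 4$, so the hypothesis $t \not\equiv 2, 4, \dots, (s-2) \pmod s$ is met, and the theorem gives $C_4^t(n) = E_4^t(n)$ for all $n \geq 0$.

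The main step is to read off both $E_4^t(n)$ and $V_{t,(t+1)/2}(n)$ as counts of partitions avoiding the same residue classes, and verify these classes coincide. Setting $s = 4$ in Definition \ref{defpartofAndthm}, the forbidden class modulo $4$ is just $2$ (since $s - 2 = 2$), while modulo $ts = 4t$ the forbidden classes are $0$ together with $t(2r+1)$ for $r = 0, 1$ (since $s/2 - 1 = 1$), namely $0, t, 3t$. On the other hand, $V_{t,(t+1)/2}(n)$ counts partitions into parts not congruent to $2 \pmod 4$ and not congruent to $0, \pm(2i-1) \pmod{4k}$ with $k = t$ and $i = (t+1)/2$. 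Here $2i - 1 = t$, and modulo $4t$ one has $-t \equiv 3t$, so the forbidden classes are again exactly $0, t, 3t$. Hence the two functions enumerate identical sets of partitions and $E_4^t(n) = V_{t,(t+1)/2}(n)$. At this point I would also confirm that the hypothesis $0 < i \leq k$ of Theorem \ref{AndrewsgofGG} holds: $i = (t+1)/2 \geq 2 > 0$, and $i \leq t$ because $t \geq 1$.

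Finally, Theorem \ref{AndrewsgofGG} applied with $k = t$ and $i = (t+1)/2$ yields $V_{t,(t+1)/2}(n) = W_{t,(t+1)/2}(n)$, and concatenating the three equalities gives $C_4^t(n) = V_{t,(t+1)/2}(n) = W_{t,(t+1)/2}(n)$, as claimed. There is no genuine obstacle here; the only point requiring care is the congruence bookkeeping in the middle step, in particular recognizing $-t \equiv 3t \pmod{4t}$ and noting that oddness of $t$ simultaneously forces $t \not\equiv 2 \pmod 4$ and $0 < (t+1)/2 \leq t$, both of which are routine.
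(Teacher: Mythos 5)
Your proposal is correct and follows essentially the same route as the paper: the paper likewise obtains the corollary by specializing $C_s^t(n)=E_s^t(n)$ to $s=4$, identifying $E_4^t(n)=V_{t,(t+1)/2}(n)$ via the residue classes $0,\,t,\,3t \pmod{4t}$ (with $t=k=2i-1$), and then invoking Andrews' theorem for $V_{k,i}(n)=W_{k,i}(n)$. Your explicit verification of $-t\equiv 3t \pmod{4t}$ and of the hypotheses $t\not\equiv 2 \pmod 4$ and $0<(t+1)/2\leq t$ just makes precise what the paper leaves as a remark.
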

Note that, when $t=3,5$, the generating function
\[
\sum_{n\geq 0} C_{4}^{t}(n)q^{n} = \dfrac{(q^{2};q^{2})_{\infty} (q^{t};q^{t})_{\infty} (q^{4t};q^{4t})_{\infty}}{(q;q)_{\infty} (q^{4};q^{4})_{\infty} (q^{2t};q^{2t})_{\infty}}=\dfrac{\psi(-q^{t})}{\psi(-q)}
\]
is the generating function for the sequences \seqnum{A036018} and \seqnum{A036026}, respectively, in the Online Encyclopedia of Integer Sequences \cite{r21}.

\begin{table}[h!]
\caption{An illustration of Corollary $\ref{corequi}$ for $(s,t,n)=(4,3,12)$.}
\begin{tabular}{@{}ccccc@{}}
\toprule
$\mathbb{C}_{4}^{3}(12)$     &  & $\mathbb{V}_{3,2}(12)$       &  & $\mathbb{W}_{3,2}(12)$  \\
\midrule
$(12)$    &                   & $(1^{12})$       &                   & $(12)$       \\
$(11,1)$    &                   & $(11,1)$       &                   & $(11,1)$       \\
$(8,4)$    &                   & $(8,4)$       &                   & $(8,4)$       \\
$(8,3,1)$    &                   & $(8,1^{4})$       &                   & $(8,3,1)$       \\
$(7,5)$    &                   & $(7,5)$       &                   & $(7,5)$       \\
$(7,4,1)$    &                   & $(7,4,1)$       &                   & $(7,4,1)$       \\
$(7,3,1^{2})$    &                   & $(7,1^{5})$       &                   & $(7,3,2)$       \\
$(5^{2},1^{2})$    &                   & $(5^{2},1^{2})$       &                   & $(10,2)$       \\
$(5,4,3)$    &                   & $(5,4,1^{3})$       &                   & $(5,4,3)$       \\
$(5,3^{2},1)$    &                   & $(5,1^{7})$       &                   & $(6,5,1)$       \\
$(9,3)$    &                   & $(4^{3})$       &                   & $(9,3)$       \\
$(4,3^{2},1^{2})$    &                   & $(4,1^{8})$       &                   & $(6,4,2)$       \\
$(4^{2},3,1)$    &                   & $(4^{2},1^{4})$       &                   & $(6^{2})$       \\
\botrule
\end{tabular}
\end{table}

\section{$s$-Modular, $s$-congruent and $s$-duplicate overpartitions}
\label{overpartitions}
An overpartition of a nonnegative integer $n$ is a nonincreasing sequence of natural numbers whose sum is $n$, and where the first occurrence of a number may be overlined. They were introduced by Corteel and Lovejoy \cite{COR}. The generating function of overpartition is
\[
\sum_{n\geq 0} \overline{p}(n)q^{n}= \dfrac{(-q;q)_{\infty}}{(q;q)_{\infty}}.
\]
where $\overline{p}(n)$ denotes the number of overpartitions of $n$. Let $\overline{M}_{s}(n)$, $\overline{C}_{s}(n)$, and $\overline{D}_{s}(n)$ denote the number of $s$-modular, $s$-congruent, and $s$-duplicate overpartitions of $n$, respectively.

\begin{theorem} \label{thmovermodular} The generating function for $\overline{M}_{s}(n)$, the number of $s$-modular overpartitions of $n$, is given by
\[
\sum_{n\geq 0} \overline{M}_{s}(n) q^{n} = \dfrac{(-2q;q)_{\infty}(-q^{s};q^{s})_{\infty}}{(q^{s};q^{s})_{\infty}}=\dfrac{(-2q;q)_{\infty}(q^{2s};q^{2s})_{\infty}}{(q^{s};q^{s})^{2}_{\infty}}.
\]
\end{theorem}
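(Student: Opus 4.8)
The plan is to obtain the generating function as an infinite product over part sizes, in direct analogy with the proof of Theorem~\ref{thmmodular}, the only new ingredient being the bookkeeping of the overlines permitted in an overpartition. For a fixed part size $n$, I would first record that the $s$-modularity condition forces the multiplicity of $n$ to be $\equiv 0$ or $1 \pmod{s}$; accordingly I decompose that multiplicity into an ``excess'' copy, which accounts for the residue $1$, together with a number of full blocks consisting of $s$ equal copies of $n$, which account for the residue-$0$ bulk. The overpartition convention then lets me overline the excess copy or not, so its contribution is $1+2q^{n}$ (the part is absent, or present in one of two decorated states), while the stack of $s$-blocks of $n$ is itself treated as an overpartition in the variable $q^{sn}$, contributing $\frac{1+q^{sn}}{1-q^{sn}}$.

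Multiplying these per-part factors over all $n\ge 1$ would give
\[
\sum_{n\ge 0}\overline{M}_{s}(n)q^{n}=\prod_{n\ge 1}(1+2q^{n})\prod_{n\ge 1}\frac{1+q^{sn}}{1-q^{sn}}=\frac{(-2q;q)_{\infty}(-q^{s};q^{s})_{\infty}}{(q^{s};q^{s})_{\infty}},
\]
which is the first of the two asserted forms. The second form is then a one-line $q$-series simplification: since $(-q^{s};q^{s})_{\infty}(q^{s};q^{s})_{\infty}=(q^{2s};q^{2s})_{\infty}$, I may replace $(-q^{s};q^{s})_{\infty}$ by $(q^{2s};q^{2s})_{\infty}/(q^{s};q^{s})_{\infty}$ to arrive at $\frac{(-2q;q)_{\infty}(q^{2s};q^{2s})_{\infty}}{(q^{s};q^{s})^{2}_{\infty}}$.

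The step I expect to be delicate is the very first one: pinning down precisely which occurrences of $n$ in an $s$-modular overpartition are eligible to be overlined, and checking that this produces exactly the factor $1+2q^{n}$ for the excess copy and the overpartition factor $\frac{1+q^{sn}}{1-q^{sn}}$ for the blocks, with no decoration counted twice. A useful consistency check along the way is that suppressing all overlines — replacing $1+2q^{n}$ by $1+q^{n}$ and $\frac{1+q^{sn}}{1-q^{sn}}$ by $\frac{1}{1-q^{sn}}$ — collapses the product to $\frac{(-q;q)_{\infty}}{(q^{s};q^{s})_{\infty}}$, recovering the generating function of Theorem~\ref{thmmodular}. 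Once the per-part factor is validated, the remainder of the argument is the routine product manipulation indicated above.
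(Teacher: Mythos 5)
Your argument is essentially the paper's own proof: the paper likewise splits each multiplicity into the residue-$1$ ``excess'' contribution, generated by $(-2q;q)_{\infty}$, and the residue-$0$ blocks, generated by $(-q^{s};q^{s})_{\infty}/(q^{s};q^{s})_{\infty}$, and multiplies the two, with the second displayed form following from Euler's identity $(-q^{s};q^{s})_{\infty}(q^{s};q^{s})_{\infty}=(q^{2s};q^{2s})_{\infty}$ exactly as you indicate. The ``delicate'' point you flag (the independent overline choices on the excess copy and on the block stack) is resolved by the same convention the paper adopts, so there is no gap relative to the published argument.
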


\begin{proof} The generating function for the number of overpartitions where the multiplicity of every part is congruent to $0$ modulo $s$ is given by
\[
\dfrac{(-q^{s};q^{s})_{\infty}}{(q^{s};q^{s})_{\infty}},
\]
and on the other hand, the number of overpartitions where the multiplicity of every part is congruent to $1$ modulo $s$ has generating function
\[
(-2q;q)_{\infty}.
\]
Then, by multiplying both generating functions together, our result follows.
\end{proof}

\begin{theorem} \label{thmovercongruent} The generating function for $\overline{C}_{s}(n)$, the number of $s$-congruent overpartitions of $n$, is given by
\[
\sum_{n\geq 0} \overline{C}_{s}(n) q^{n} = \dfrac{(-q;q^{2})_{\infty}(-q^{s};q^{s})_{\infty}}{(q;q^{2})_{\infty}(q^{s};q^{s})_{\infty}}=\dfrac{(q^{2};q^{2})^{3}_{\infty}(q^{2s};q^{2s})_{\infty}}{(q;q)^{2}_{\infty}(q^{4};q^{4})_{\infty}(q^{s};q^{s})^{2}_{\infty}}.
\]
\end{theorem}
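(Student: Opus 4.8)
The plan is to first pin down which parts are allowed in an $s$-congruent overpartition, then assemble the generating function as a product of factors of the form $\dfrac{1+q^{n}}{1-q^{n}}$, and finally reduce that product to the two stated shapes by elementary $q$-product identities. Since $s$ is even, the residues $2,4,\dots,s-2$ are exactly the nonzero even residues modulo $s$; hence (as already recorded in the proof of Theorem~\ref{thmcongruent}) a part is $s$-congruent precisely when it is odd or a multiple of $s$, and these two families are disjoint. So the only new ingredient over the ordinary case is the overlining.

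First I would note that in an overpartition each admissible value $n$ contributes the factor $1+2\sum_{m\geq 1}q^{mn}=\dfrac{1+q^{n}}{1-q^{n}}$, where the choice of multiplicity is recorded by the geometric part and the optional overlining of the first occurrence by the extra copy. Taking the product over odd $n$ and over multiples of $s$ separately, and using $\prod_{n\geq 1}(1+q^{2n-1})=(-q;q^{2})_{\infty}$, $\prod_{n\geq 1}(1-q^{2n-1})=(q;q^{2})_{\infty}$, and the analogous products in base $q^{s}$, gives
\[
\sum_{n\geq 0}\overline{C}_{s}(n)q^{n}
=\prod_{n\geq 1}\dfrac{1+q^{2n-1}}{1-q^{2n-1}}\cdot\prod_{n\geq 1}\dfrac{1+q^{sn}}{1-q^{sn}}
=\dfrac{(-q;q^{2})_{\infty}(-q^{s};q^{s})_{\infty}}{(q;q^{2})_{\infty}(q^{s};q^{s})_{\infty}},
\]
which is the first asserted form.

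For the second form I would invoke the standard simplifications $1/(q;q^{2})_{\infty}=(-q;q)_{\infty}=(q^{2};q^{2})_{\infty}/(q;q)_{\infty}$ and $(-q^{s};q^{s})_{\infty}=(q^{2s};q^{2s})_{\infty}/(q^{s};q^{s})_{\infty}$, together with $(-q;q^{2})_{\infty}=(-q;q)_{\infty}/(-q^{2};q^{2})_{\infty}=(q^{2};q^{2})_{\infty}^{2}\big/\big[(q;q)_{\infty}(q^{4};q^{4})_{\infty}\big]$. Substituting these into the first form and collecting factors yields
\[
\sum_{n\geq 0}\overline{C}_{s}(n)q^{n}
=\dfrac{(q^{2};q^{2})^{3}_{\infty}(q^{2s};q^{2s})_{\infty}}{(q;q)^{2}_{\infty}(q^{4};q^{4})_{\infty}(q^{s};q^{s})^{2}_{\infty}},
\]
as claimed. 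I do not anticipate a genuine obstacle here: the argument is a direct computation in the spirit of Theorem~\ref{thmovermodular}, and the only points requiring care are the exhaustive, disjoint decomposition of the admissible parts into odd numbers and multiples of $s$ (valid because $s$ is even and $\geq 4$) and the correct bookkeeping of the $1+q^{n}$ numerators when upgrading the ordinary count of Theorem~\ref{thmcongruent} to overpartitions.
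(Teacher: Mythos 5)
Your argument is correct and follows essentially the same route as the paper: split the admissible parts into odd numbers and multiples of $s$ (disjoint since $s$ is even), take the overpartition factor $\frac{1+q^{n}}{1-q^{n}}$ for each, and multiply. You additionally verify the second product form, which the paper states without proof; your simplifications there are accurate.
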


\begin{proof} The number of overpartitions into odd parts has the generating function
\[
\dfrac{(-q;q^{2})_{\infty}}{(q;q^{2})_{\infty}},
\]
while
\[
\dfrac{(-q^{s};q^{s})_{\infty}}{(q^{s};q^{s})_{\infty}}
\]
generates overpartitions into parts congruent to $0$ modulo $s$.
\end{proof}

\begin{theorem} \label{thmoverduplicate} The generating function for $\overline{D}_{s}(n)$, the number of $s$-duplicate overpartitions of $n$, satisfies the identity
\[
\sum_{n\geq 0} \overline{D}_{s}(n) q^{n} = \dfrac{(-2q;q)_{\infty}(-q^{s/2};q^{s/2})_{\infty}}{(-2q^{s/2};q^{s/2})_{\infty} (q^{s/2};q^{s/2})_{\infty}}=\dfrac{(-2q;q)_{\infty}(q^{s};q^{s})_{\infty}}{(-2q^{s/2};q^{s/2})_{\infty} (q^{s/2};q^{s/2})^{2}_{\infty}}.
\]
\end{theorem}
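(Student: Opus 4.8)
The plan is to mirror the combinatorial derivation of the generating function for $D_s(n)$ and of the two preceding overpartition theorems (Theorems \ref{thmovermodular} and \ref{thmovercongruent}), exploiting the fact that the part-values of an $s$-duplicate overpartition split into two \emph{disjoint} classes. Recall that in an $s$-duplicate partition a value is unrestricted in multiplicity exactly when it is a multiple of $s/2$, and is otherwise forced to be distinct; the overpartition version keeps the same restriction on total multiplicities while allowing the first occurrence of each value to be overlined. First I would observe that, because the restriction is governed entirely by whether a value is divisible by $s/2$, the generating function factors as a product over the two disjoint sets of part-values, and within each set the overline lives on the single value it belongs to. Hence no incompatibility between the two blocks can arise; this is the point that makes the present case cleaner than the $s$-modular and $s$-congruent ones, where a single value's multiplicity had to be split.

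Next I would compute the two blocks separately. For the values indivisible by $s/2$ the parts are distinct, so each such value $k$ contributes the factor $(1+2q^{k})$ (absent, or present once and either overlined or not), giving
\[
\prod_{\substack{k\geq 1\\ s/2\,\nmid\, k}}(1+2q^{k})=\frac{(-2q;q)_{\infty}}{(-2q^{s/2};q^{s/2})_{\infty}},
\]
which enumerates overpartitions into distinct parts indivisible by $s/2$. For the values that are multiples of $s/2$ the multiplicity is unrestricted, so each contributes the ordinary overpartition factor $(1+q^{k})/(1-q^{k})$, and the block generating function is
\[
\prod_{m\geq 1}\frac{1+q^{sm/2}}{1-q^{sm/2}}=\frac{(-q^{s/2};q^{s/2})_{\infty}}{(q^{s/2};q^{s/2})_{\infty}}.
\]
Multiplying the two blocks yields the first displayed product in the statement.

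Finally I would pass to the second form by the same elementary device used in Theorem \ref{thmequivalence}: since $s$ is even, $1-q^{sn}=(1-q^{sn/2})(1+q^{sn/2})$ for every $n\geq 1$, whence $(-q^{s/2};q^{s/2})_{\infty}=(q^{s};q^{s})_{\infty}/(q^{s/2};q^{s/2})_{\infty}$. Substituting this expression for the factor $(-q^{s/2};q^{s/2})_{\infty}$ in the numerator immediately converts the first product into
\[
\frac{(-2q;q)_{\infty}(q^{s};q^{s})_{\infty}}{(-2q^{s/2};q^{s/2})_{\infty}(q^{s/2};q^{s/2})^{2}_{\infty}}.
\]
I do not expect a genuine obstacle: the only thing requiring care is the bookkeeping of the overline when a value sits in the distinct block, and as noted above the disjointness of the two value-classes makes that automatic, so the remaining manipulation is purely formal.
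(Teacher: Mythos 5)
Your proposal is correct and follows essentially the same route as the paper: the paper's proof likewise factors the generating function over the two disjoint value-classes, with $(-2q;q)_{\infty}/(-2q^{s/2};q^{s/2})_{\infty}$ generating overpartitions into distinct parts indivisible by $s/2$ and $(-q^{s/2};q^{s/2})_{\infty}/(q^{s/2};q^{s/2})_{\infty}$ generating overpartitions into multiples of $s/2$. Your explicit verification of the factor $(1+2q^{k})$ per distinct value and of the second product form via $(-q^{s/2};q^{s/2})_{\infty}=(q^{s};q^{s})_{\infty}/(q^{s/2};q^{s/2})_{\infty}$ only spells out details the paper leaves implicit.
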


\begin{proof} The generating function
\[
\dfrac{(-2q;q)_{\infty}}{(-2q^{s/2};q^{s/2})_{\infty}}
\]
generates overpartitions into distinct parts indivisible by $s/2$, and
\[
\dfrac{(-q^{s/2};q^{s/2})_{\infty}}{(q^{s/2};q^{s/2})_{\infty}}
\]
generates overpartitions into parts congruent to $0$ modulo $s/2$.
\end{proof}

\section*{Concluding remarks and perspectives}

In this paper, we have explored three equinumerous classes of partitions: $s$-modular, $s$-congruent, and $s$-duplicate partitions. Specifically, we established a bijection between these classes and investigated several of their combinatorial properties. We also examined the relationship between partitions into parts that are simultaneously $s$-congruent and $t$-distinct and the Andrews-Göllnitz-Gordon theorem. These results contribute significantly to the literature on integer partitions and provide a fertile ground for ongoing and future research.

Future research could address several intriguing questions:

(1) Are there any series or product expansions using the Ramanujan theta-function for $D_{s}(n)$ for $s \geq 6$, analogous to the expansion $\sum_{n\geq 0} \mypod(n)=\frac{1}{\psi(-q)}$?

(2) What is the impact of $s$-duplicate partitions on topics where $\mypod(n)$ appears?

(3) Are there any recurrence relations, similar to that of $C_{4}(n)$, applicable to the remaining cases?

(4) What interesting arithmetic properties do these classes of partitions exhibit?

(5) Are there any dissections for the generating function $\frac{f_2}{f_1 f_s}$ for $s\geq 6$?

(6) In the context of $D_{4}(n,2)$, are there any interesting combinatorial interpretations for $D_{4}(n,k)$ for $k\geq 3$? 

(7) The same previous question applies to $M_{s}(n,k)$ and $C_{s}(n,k)$?

(8) Are there any equivalent classes to $\mathbb{E}_{s}^{t}(n)$ that satisfy specific difference conditions?

(9) What arithmetic properties do $E_{s}^{t}(n)$ exhibit?

(10) Is there an overpartition analogue for the partitions enumerated by $E_{s}^{t}(n)$?

(11) What arithmetic properties do $\overline{C}_{s}(n)$ exhibit?

\bmhead{Acknowledgements} The authors would like to thank the reviewers for their valuable remarks and suggestions to improve the original manuscript. This work was supported by DG-RSDT (Algeria), PRFU Project, No. C00L03UN180120220002. 

\section*{Conflict of interest statement}

On behalf of all authors, the corresponding author states that there is no conflict of interest.
\section*{Data availability statement}
On behalf of all authors, the corresponding author states that the manuscript has no associated data.

%\bibliography{sn-bibliography}

\begin{thebibliography}{24}

\bibitem{r1} K. Alladi, Partitions with non-repeating odd parts and $q$-hypergeometric identities. \textit{The legacy of Alladi Ramakrishnan in the mathematical sciences}, Springer, New York, NY. (2010), 169--182.


\bibitem{All1} K. Alladi, Partition identities involving gaps and weights. \textit{Trans. Am. Math. Soc.} \textbf{349} (1997),
5001-5019.

\bibitem{All2} K. Alladi, A combinatorial correspondence related to G\"ollnitz's (Big) partition theorem and
application4s. \textit{Trans. Am. Math. Soc.} \textbf{349} (1997), 2721--2735.

\bibitem{r2} G. E. Andrews, A generalization of the G\"ollnitz-Gordon partition theorems. \textit{Proc. Am. Math. Soc.} \textbf{18} (1967), 945--952.

\bibitem{r3} G. E. Andrews, Two theorems of Gauss and allied identities proved arithmetically. \textit{Pac. J. Math.} \textbf{41} (1972), 563--578.

\bibitem{r4} G. E. Andrews, K. Alladi, and B. Gordon, Generalizations and refinements of a partition theorem of G\" ollnitz. \textit{J. Reine Ang. Math.} \textbf{460} (1995), 165--188.

\bibitem{ANDB} G. E. Andrews, The theory of partitions. \textit{Encyclopedia of Math. and its Applications} \textbf{2} (1976), Addison Wesley.

\bibitem{r5} A. Berkovich, F. G. Garvan, Some observations on Dyson's new symmetries of partitions. \textit{J. Comb. Theory, Ser. A} \textbf{100} (2002), 61--93.

\bibitem{Berndt1991} B. C. Berndt, {\it Ramanujan's Notebooks Part III}, Springer-Verlag, New York, (1991).

\bibitem{COR} S. Corteel, J. Lovejoy,  Overpartitions. \textit{Trans. Amer. Math. Soc.} \textbf{365} (2003), 623--635.

\bibitem{BallantineWelch2023} Cristina Ballantine and Amanda Welch, Generalizations of POD and PED partitions. arXiv:2308.06136.

\bibitem{r6} S. P. Cui, N. S. Gu, Arithmetic properties of $\ell$-regular partitions. \textit{Adv. Appl. Math.} \textbf{51} (2013), 507--523.

\bibitem{r7} B. Dandurand, D. Penniston, $\ell$-divisibility of $\ell$-regular partition functions. \textit{Ramanujan J.} \textbf{19} (2009), 63--70. 

\bibitem{r8} B. Drake, Limits of areas under lattice paths. \textit{Discrete Math.} \textbf{309} (2009), 3936--3953.

\bibitem{r10} L. Ferrari, Schr\"oder partitions, Schr\"oder tableaux and weak poset patterns. \textit{Contrib. Discrete Math.} \textbf{16} (2021), 160--184.      

\bibitem{Fin} N. J. Fine, Basic hypergeometric series and applications, \textit{Math. Surveys and Monographs}
\textbf{27} (1988), Amer. Math. Soc., Providence.

\bibitem{r11} A. Fink, R. Guy,  M. Krusemeyer, Partitions with parts occurring at most thrice. \textit{Contrib. Discrete Math.} \textbf{3} (2008).

\bibitem{GOL} H. G\" ollnitz, Partitionen mit Differenzenbedingungen. \textit{Journal f\"ur die reine und angewandte Mathematik} \textbf{225} (1967), 154--190.

\bibitem{GOR} B. Gordon, Some Continued Fractions of the Rogers-Ramanujan Type. \textit{Duke Mathematical Journal} \textbf{32} (1965), 741--748.

\bibitem{r12} P. Hagis, Partitions with a restriction on the multiplicity of the summands. \textit{Trans. Amer. Math. Soc.} \textbf{155} (1971), 375--384.

\bibitem{r13} M. D. Hirschhorn, J. A. Sellers, Arithmetic properties of partitions with odd parts distinct. \textit{Ramanujan J.} \textbf{22} (2010), 273--284.

\bibitem{r14} D. Kim, A. J. Yee, A Note on Partitions into Distinct Parts and Odd Parts. \textit{Ramanujan J.} \textbf{3} (1999), 227--231.

\bibitem{r15} J. Lovejoy, R. Osburn, Quadratic forms and four partition functions modulo $3$. \textit{Integers} \textbf{11} (2011), Paper A4.

\bibitem{r16} M. Merca, On the partitions into distinct parts and odd parts. \textit{Quaest. Math.}, \textbf{44} (2021), 1095--1105.

\bibitem{r17} M. Merca, On the number of partitions into parts not congruent to $0$, $\pm 3 $ (mod  $12$). \textit{Period. Math. Hung.} \textbf{83} (2021), 133--143.

\bibitem{r18} M. Merca, New relations for the number of partitions with distinct even parts. \textit{J. Number Theory} \textbf{176} (2017), 1--12.

\bibitem{r19} S. Radu, J. A. Sellers, Congruence properties modulo $5$ and $7$ for the pod function. \textit{Int. J. Number Theory}, \textbf{7} (2011), 2249--2259.

\bibitem{Syl} J. J. Sylvester, A constructive theory of partitions in three Acts, an Interact, and an Exodion. \textit{Am. J. Math.} \textbf{5} (1882), 251--330.

\bibitem{r20} A. Sills, Rademacher-type formulas for restricted partition and overpartition functions. \textit{Ramanujan J.} \textbf{23} (2010), 253--264.

\bibitem{r21} N. Sloane: The On-Line Encyclopedia of Integer Sequences. Published electronically at  https://oeis.org, 2024.

\bibitem{r22} D. Spector, Duality, partial supersymmetry, and arithmetic number theory. \textit{J. Math. Phys.} \textbf{39} (1998), 1919--1927.

\bibitem{r23} L. Wang, New Congruences for Partitions where the Odd Parts are Distinct. \textit{J. Integer Seq.} \textbf{18} (2015), Article 15.4.2.

\bibitem{r24} M. P. Zaletel, R. S. Mong, Exact matrix product states for quantum Hall wave functions. \textit{Phys. Rev. B}  \textbf{86} (2012), 245--305.

\bibitem{Zen} J. Zeng, The $q$-variations of Sylvester's bijection between odd and strict partitions. \textit{Ramanujan J.} \textbf{9} (2005), 289--303.
\end{thebibliography}
%\input sn-article.bbl

\end{document}